\documentclass{amsart}


\usepackage{enumitem}
\usepackage{amssymb}
\usepackage{latexsym}
\usepackage[table]{xcolor}
\usepackage{graphicx}
\usepackage{tikz}
\usepackage{comment}
\allowdisplaybreaks

\usepackage[displaymath,mathlines,pagewise]{lineno}
\usetikzlibrary{arrows}

\usepackage{hyperref}
\usepackage[dvistyle, colorinlistoftodos]{todonotes}


\theoremstyle{plain}
\newtheorem{prop}{Property}[section]
\newtheorem{theorem}[prop]{Theorem}
\newtheorem{proposition}[prop]{Proposition}
\newtheorem{corollary}[prop]{Corollary}
\newtheorem{lemma}[prop]{Lemma}

\newtheorem{definition}[prop]{Definition}

\theoremstyle{remark}
\newtheorem{remark}[prop]{Remark}

\newtheorem{observation}[prop]{Observation}


\newcommand{\Z}{{\mathbb Z}}
\newcommand{\N}{{\mathbb N}}
\newcommand{\E}{{\mathbb E}}
\def\I{\mathcal I}
\def\A{\mathcal A}
\def\S{\mathcal S}

\def\FF{\mathcal F}



\textwidth 16cm
\textheight 21cm
\topmargin 0cm
\evensidemargin 0cm
\oddsidemargin 0cm
\parindent=0 mm
\parskip 3mm

\baselineskip 16pt

\title{Counting configuration-free sets in groups}

 \author{Juanjo Ru\'e}
\address{(JR) Freie Universit\"at Berlin, Institut f\"ur Mathematik und Informatik, Arnimallee 3, 14195 Berlin, Germany}
\email{jrue@zedat.fu-berlin.de}
\thanks{J.\,R.~was partially supported by the FP7-PEOPLE-2013-CIG project CountGraph (ref. 630749), the Spanish projects MTM2014-54745-P and MTM2014-56350-P, the DFG within the Research Training Group \emph{Methods for Discrete Structures} (ref. GRK1408), and the \emph{Berlin Mathematical School}.}

\author{Oriol Serra}
\address{(OS) Department of Mathematics, Universitat Polit\`ecnica de Catalunya and Barcelona Graduate School of Mathematics, Barcelona, Spain}
\email{\tt oriol.serra@upc.edu}
\thanks{O.\,S. was supported by the Spanish Ministerio de Econom\'ia y Competitividad under project MTM2014-54745-P }

\author{Lluis Vena}
\address{(LV) Computer Science Institute of Charles University (IUUK and ITI)
Malostransk\'e n\'amesti 25, 11800 Praha 1, Czech Republic}
\email{\tt lluis.vena@gmail.com}
\thanks{L.\,V. was supported by the Center of Excellence-Inst. for Theor. Comp. Sci., Prague, P202/12/G061, and by Project ERCCZ LL1201 \emph{CORES}}

\keywords{arithmetic removal lemma, hypergraph container, additive combinatorics}

\begin{document}

  \maketitle

\begin{abstract}
We provide new examples of the asymptotic counting for the number of subsets on groups of given size which are free of certain configurations. These examples include sets without solutions to equations in non-abelian groups, and linear configurations in abelian groups defined from group homomorphisms. The results are obtained by combining the methodology of hypergraph containers joint with arithmetic removal lemmas. As a consequence, random counterparts are presented as well.
\end{abstract}


\section{Introduction}\label{s.intro}

The study of sparse (and probabilistic) analogues of results in extremal combinatorics have become a very active area of research in extremal and random combinatorics (see e.g. the survey by Conlon~\cite{Con14}).
One starting point is \emph{Szemer\'edi Theorem}~\cite{Sze75} on the existence of arbitrarily long arithmetic progressions in sets of integers with positive upper density.
This seminal result and the tools arising in its many proofs have been enormously influential in the development of modern discrete mathematics.
Nowadays a large proportion of the research in additive combinatorics is inspired by these achievements.

Sparse analogues of Szemer\'edi Theorem started in Kohayakawa, R\"odl and {\L}uczack \cite{KoLuRo96} by studying the threshold probability for a random set of the integer interval $[1,n]$ whose subsets of given density contain asymptotically almost surely (a.a.s.) $3$--term arithmetic progressions.
The extension of the result to $k$--term arithmetic progressions was a breakthrough obtained independently, and by different methods, by Conlon and Gowers \cite{CG10} and by Schacht \cite{Sch12}.
There is still  another more recent proof based on combinatorial arguments due to Saxton, and Thomason \cite{ST15} and by Balogh, Morris, and Samotij \cite{BMS14}.
The approach in the above two papers is based on a methodology building on the structure of independent sets in hypergraphs. \emph{Hypergraphs containers} (as it is named in \cite{ST15}) provides a general framework to attack a wide variety of problems which can be encoded by uniform hypergraphs.
The philosophy behind this method is that, for a large class of uniform hypergraphs which satisfy mild conditions, one can find a small collection of sets of vertices (which are called \emph{containers}) which contain all independent sets of the given hypergraph, thus providing sensible upper bounds on the number of independent sets.

In addition to important applications in combinatorics, the two works above mentioned also contain arithmetic applications, providing in particular a new proof of the sparse Szemer\'edi Theorem. One important ingredient of these proofs, explicitly exposed in \cite{BMS14}, is the so-called \emph{Varnavides Theorem}~\cite{Var59}. This is the robust counterpart of Szemer\'edi Theorem: once a set has positive density, it does not only have one but a positive proportion of the total number of $k$--term arithmetic progressions.
This phenomenon is the number theoretical counterpart of the supersaturation phenomenon in the graph setting.

Nowadays there is a rich theory dealing with these type of results, which are rephrased under the name of \emph{Arithmetic Removal Lemmas}.
The idea behind them can be traced back to  the proof of Roth's Theorem by Ruzsa and Szemer\'edi \cite{RuzSze78} and was first formulated by Green \cite{Gre05}  for a linear equation in an abelian group by using methods of Fourier analysis. The picture was complemented independently by Shapira \cite{Sha10} and by Kr\'al', Serra and Vena \cite{KRSV12} by proving a removal lemma for linear systems in the integers. These results have been extended in several directions, including arithmetic removal lemmas for a single equation in non-abelian groups, for linear systems over finite fields and for integer linear systems over finite abelian groups (see \cite{Krsv09,Sha10,KRSV12,KrSV13}).

These extensions of Green's Arithmetic Removal Lemma provide proofs of the Szemer\'edi Theorem in general abelian groups (see also \cite{szeg10}), but cannot handle the robust versions of the multidimensional Szemer\'edi Theorem (see for instance~\cite{sol04} on Furstenberg and Katznelson work \cite{furkatz78}) or, more generally, the appearance and enumeration of finite configurations in dense subsets in abelian groups (as seen in Tao \cite[Theorem~B.1]{T12}). As a consequence, the above mentioned arithmetic removal lemmas cannot be used to show the sparse counterparts of these results (see \cite{BMS14,CG10,Sch12}).

The main contribution of this paper is to present a methodology which allows us to deal with new configurations that cannot be directly treated with the previous removal lemmas. We exemplify these configurations with the following result involving an asymptotic enumeration of rectangle-free sets in abelian groups (which can be seen as a generalization of Sidon-like sets).

\begin{theorem}[Rectangles in abelian groups, Theorem~\ref{thm:square}] Let $\{G_{i}\}_{i\geq 1}$ be a sequence of finite abelian groups, $H_i,K_i$ subgroups of $G_i$ and such that $|H_i|,|K_i|, |G_i|\to \infty$. Let
	\begin{displaymath}
	S_i=\{(x,x+a,x+b,x+a+b):\, x\in G_i, a\in H_i, b\in K_i\}
	\end{displaymath}
	be the set of configurations and let
	\begin{displaymath}
	S_i^{\scriptscriptstyle{(4)}}=\{(x_1,x_2,x_3,x_4)\in S\;: x_j\neq x_{j'},\; 1\le j<j'\le 4\},
	\end{displaymath}
	be the ones in $S_i$ with pairwise distinct entries.
	Assume that $\max\{|H_i|,|K_i|\}\leq(|S_i^{(4)}|/|G_i|)^{2/3}$.
For each $\delta>0$ with $\delta<1/40$ there exists $C=C(\delta)$ and $i_0>0$ for which the following holds: for each $i\geq i_0$ and
\begin{displaymath}
t> \frac{C}{\delta}\left(\frac{|G_i|^{4}}{|S_i^{(4)}|}\right)^{1/3},
\end{displaymath}
the number of sets free of configurations in $S_i^{(4)}$ with cardinality $t$
is bounded from above by $${ 2 \delta |G_i| \choose t}.$$
\end{theorem}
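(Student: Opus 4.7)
The plan is to encode rectangle-free subsets of $G_i$ as independent sets in a suitably constructed $4$-uniform hypergraph, verify the codegree conditions needed to apply the hypergraph container theorem, and combine the resulting container family with a Varnavides/removal-type supersaturation statement to control the size of each container.

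First I would define a $4$-uniform hypergraph $\mathcal{H}_i$ on vertex set $G_i$ whose edges are the $4$-element subsets $\{x_1,x_2,x_3,x_4\}$ arising as unordered images of tuples in $S_i^{(4)}$. Since each non-degenerate rectangle corresponds to only $O(1)$ ordered tuples, $|E(\mathcal{H}_i)|=\Theta(|S_i^{(4)}|)$, the average degree is $d\asymp |S_i^{(4)}|/|G_i|$, and the codegrees can be read off from the structure of the configuration: any two vertices lie in at most $O(\max\{|H_i|,|K_i|\})$ rectangles (the worst case being when they sit on a side), any three vertices determine the fourth up to $O(1)$ choices, and four vertices lie in at most one rectangle. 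Setting $\tau\asymp(|G_i|/|S_i^{(4)}|)^{1/3}$, the hypothesis $\max\{|H_i|,|K_i|\}\le(|S_i^{(4)}|/|G_i|)^{2/3}$ is precisely what makes the balanced codegree conditions $\Delta_j(\mathcal{H}_i)\lesssim d\,\tau^{\,j-1}$ hold simultaneously for $j=2,3,4$, so the Saxton--Thomason / Balogh--Morris--Samotij container theorem applies.

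Next I would invoke that container theorem to obtain a family $\mathcal{C}_i$ of subsets of $G_i$ with $|\mathcal{C}_i|\le \exp(O(\tau|G_i|\log(1/\tau)))$, such that every rectangle-free set is contained in some $C\in\mathcal{C}_i$ and each container $C$ spans at most $\varepsilon|E(\mathcal{H}_i)|$ edges of $\mathcal{H}_i$. Supersaturation is then supplied by an arithmetic removal lemma for the linear system defining a rectangle (with the side constraints $a\in H_i,\ b\in K_i$), as developed in the preceding sections: for an appropriate choice $\varepsilon=\varepsilon(\delta)$, any subset of $G_i$ of size exceeding $\delta|G_i|$ already contains at least $\gamma(\delta)|S_i^{(4)}|>\varepsilon|E(\mathcal{H}_i)|$ rectangles, which forces $|C|\le\delta|G_i|$ for every $C\in\mathcal{C}_i$.

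Finally, the number of rectangle-free $t$-subsets of $G_i$ is bounded by $|\mathcal{C}_i|\binom{\delta|G_i|}{t}$. For the prescribed range $t>(C/\delta)(|G_i|^4/|S_i^{(4)}|)^{1/3}\asymp(C/\delta)\tau|G_i|$, a standard comparison $\binom{2\delta|G_i|}{t}/\binom{\delta|G_i|}{t}\ge 2^{\Omega(t)}$ absorbs the container factor $|\mathcal{C}_i|$, yielding the claimed upper bound $\binom{2\delta|G_i|}{t}$. The main obstacle, I expect, is matching the quantitative strength of the supersaturation/removal estimate with the codegree exponents: the $2/3$ in the hypothesis and the $1/3$ in the lower bound on $t$ must be produced by the same balancing of $\tau$ against $\Delta_2,\Delta_3$, and one has to ensure that the removal lemma for the two-parameter linear system $\{x_2-x_1\in H_i,\ x_3-x_1\in K_i,\ x_1-x_2-x_3+x_4=0\}$ is available with constants that do not spoil this balance. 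Both pieces should be furnished by the general framework assembled earlier in the paper.
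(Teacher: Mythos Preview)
Your proposal is correct and follows essentially the same route as the paper. The paper packages the container-plus-supersaturation machinery into Corollary~\ref{o.counting_sol_free_hom} (via Theorem~\ref{t.count_conf_1} and the removal-lemma-based $V$-property of Lemma~\ref{l.V-property_invariant_hom}) and then simply checks its hypotheses by computing the degrees $\alpha_\ell^4$; your codegree computation $\Delta_2\lesssim\max\{|H_i|,|K_i|\}$, $\Delta_3,\Delta_4=O(1)$ and the resulting balance at $\tau\asymp(|G_i|/|S_i^{(4)}|)^{1/3}$ is exactly this calculation unpacked, and your final binomial absorption step is already built into Theorem~\ref{t.count_conf_1}.
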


The proof of  Theorem~\ref{thm:square} contains two main ingredients.
One of them  is a removal lemma for group homomorphisms  due to Vena~\cite{Vena14} which unifies and extends previous results concerning arithmetic removal lemmas. This result can be viewed as a strong generalization of the removal lemma associated to the multidimensional Szemer\'edi Theorem for abelian groups due to Tao \cite[Theorem~B.1]{T12}, and has interest by itself. The second ingredient is the hypergraph containers method developped by Saxton, and Thomason \cite{ST15} and by Balogh, Morris, and Samotij \cite{BMS14}. Additional examples of the applications of the methodology presented here  are given in Subsection~\ref{ss.example}. These examples deal with  configurations that have not been treated before where the use of the language of homomorphisms is crucial.

In order to obtain these new applications, we have translated some of the properties of similar examples studied for instance in \cite{BMS14,ST15} into an adequate framework to treat general configurations. This language is necessary, for instance, in order to identify the threshold function when studying sparse random analogues of Szemer\'edi theorem  (see Section \ref{sec:random}). Let $S\subset G^k$ be a set of configurations in an ambient set $G$.  Given  $\delta >0$,  a subset $X\subset G$ is $(\delta, S)_k$--stable if every subset of $X$ with cardinality at least $\delta |X|$ contains a configuration  in $S$ with all elements different. The next theorem is an extension of Schacht \cite[Theorem~2.3, Theorem~2.4]{Sch12} and Conlon, Gowers \cite[Theorem~2.12]{CG10} giving the threshold for a random set to be $(\delta, S)$--stable when $S$ is defined as the kernel of a group homomorphism. One needs to be careful about subconfigurations  we want to avoid as  they may contribute in the asymptotic results. This caution explains the terminology in the statement of the next theorem, which is explained in  Section \ref{subsec:def-conf}.

\begin{theorem}[Threshold function for homomorphism systems, Theorem~\ref{t.ramdom_sparse_hom}]
	Let $k$ be a positive integer and let $\{(S_i,G_i)\}_{i\geq 1}$ be a normal sequence arising from invariant group homomorphisms,  $M_i:G_i^k\to G_i^k$, with kernel $S_i$.
Let
\begin{displaymath}
	p_{(S_i,G_i)}=\max_{\ell\in[2,k]}
	\left(\frac{\alpha^k_{\ell}(S_i,G_i)}{\alpha^{k}_{1}(S_i,G_i)} \right)^{\frac{1}{\ell-1}}.
\end{displaymath}
Then, there exist constants $c_1,c_2$, depending on $\delta$ and $k$ such that
\begin{displaymath}
	 \lim_{i\to \infty}
\mathbb{P} ([G_i]_p\,\,\mathrm{is}\,\, (\delta,S_i)_k\text{-stable})=
\left\{
\begin{array}{cc}
	1 & \text{if } p\geq c_1 p_{(S_i,G_i)}, \\
	0 & \text{if } p< c_2 p_{(S_i,G_i)},\\
\end{array}
\right.
\end{displaymath}
where $[G_i]_p$ denotes a binomial random set in $G_i$ with probability $p$.
\end{theorem}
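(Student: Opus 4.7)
The plan is to follow the strategy pioneered by Schacht \cite{Sch12} and Conlon--Gowers \cite{CG10} in their threshold theorems for Szemer\'edi's theorem, adapting it to the homomorphism-kernel setting introduced in this paper. The two halves of the statement require completely different machinery, so I would prove them separately.

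For the $1$-statement (the upper bound $p\geq c_1 p_{(S_i,G_i)}$), my plan is to combine the arithmetic removal lemma of Vena~\cite{Vena14} with the hypergraph container method. First, I would interpret the configurations $S_i$ as the edge set of a $k$-uniform hypergraph $\mathcal H_i$ on the vertex set $G_i$, so that $(\delta,S_i)_k$-stable sets correspond to $\delta$-sparse independent sets of $\mathcal H_i$ with the degenerate subconfigurations discarded. The removal lemma supplies the supersaturation ingredient: any subset of $G_i$ of density at least $\delta$ contains at least $c(\delta)\,\alpha^k_1(S_i,G_i)$ configurations with pairwise distinct coordinates. Second, the co-degree parameters of $\mathcal H_i$ are controlled precisely by $\alpha^k_\ell(S_i,G_i)$; this is where the invariance of $M_i$ and the normality of the sequence are used, to ensure that the $\ell$-th co-degrees are asymptotically uniform on the vertex set. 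Feeding these two estimates into the container theorem of Saxton--Thomason \cite{ST15} or Balogh--Morris--Samotij \cite{BMS14} produces a family $\mathcal C_i$ of containers, each of size at most $\delta|G_i|$, covering every $(\delta,S_i)_k$-stable set, with $|\mathcal C_i|$ bounded in terms of the $\alpha^k_\ell$. A Chernoff plus union-bound computation then shows that $\mathbb P([G_i]_p\subseteq C\text{ for some }C\in\mathcal C_i)\to 0$ as soon as $p\geq c_1 p_{(S_i,G_i)}$, where the exponent $1/(\ell-1)$ in the definition of $p_{(S_i,G_i)}$ arises from balancing the size of $\mathcal C_i$ against the Chernoff deviation.

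For the $0$-statement (the lower bound $p< c_2 p_{(S_i,G_i)}$), the strategy is the classical deletion argument. I would show that for $p$ below the threshold one can find a value of $\ell\in[2,k]$ attaining the maximum in the definition of $p_{(S_i,G_i)}$ and exhibit a $(\delta,S_i)_k$-stable subset of $[G_i]_p$ with positive probability. The expected size of $[G_i]_p$ is $p|G_i|$, while the expected number of configurations in $S_i^{(k)}\cap [G_i]_p^k$ is of order $p^k\alpha^k_1(S_i,G_i)$; second-moment and Janson-type computations using the $\alpha^k_\ell$ then produce a subset of density at least $\delta$ inside $[G_i]_p$ with no configuration in $S_i^{(k)}$, by deleting one vertex from each surviving configuration.

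The main obstacle, and the reason the language of Subsection~\ref{subsec:def-conf} is set up so carefully, is handling the degenerate subconfigurations, i.e.\ the solutions of $M_i$ in which several coordinates coincide. These are not genuine configurations of $S_i^{(k)}$ but they do contribute to the counts $\alpha^k_\ell$ through the kernel of $M_i$, and they can inflate both the co-degree estimates needed for containers and the variance estimates needed for the deletion argument. The normality hypothesis on the sequence $\{(S_i,G_i)\}$ is precisely what guarantees that these degenerate contributions are negligible compared with the main term $\alpha^k_1(S_i,G_i)$, so that the same threshold $p_{(S_i,G_i)}$ controls the random model on both sides. Once this is in place, the two bounds match and yield the stated dichotomy.
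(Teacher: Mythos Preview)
Your plan for the $1$-statement is essentially the paper's: supersaturation from the removal lemma (Lemma~\ref{l.V-property_invariant_hom}) feeds the container machinery (Theorem~\ref{t.count_conf_1}), and a first-moment/Chernoff bound finishes (Theorem~\ref{thm: random2}). One phrasing is off: the event to control is not ``$[G_i]_p\subseteq C$ for some container $C$'' but rather that $[G_i]_p$ contains a large independent subset; containers bound the \emph{number} of such subsets of a given size, and Markov on that count yields the conclusion. This is cosmetic.

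The $0$-statement, however, has a real gap. Your deletion argument counts and removes vertices from \emph{full} $k$-tuples in $S_i^{(k)}\cap[G_i]_p^k$. That computation balances when $p^k|S_i^{(k)}|\asymp p|G_i|$, i.e.\ at $p\asymp(\alpha_k^k/\alpha_1^k)^{1/(k-1)}$, which is only the $\ell=k$ term of the maximum defining $p_{(S_i,G_i)}$. To reach the correct threshold for every $\ell$ one must instead delete along the \emph{projected} configurations $\pi_U(S_i^{(k)})\cap[G_i]_p^{|U|}$ for the $U$ realising the maximum; killing all such projections a fortiori kills all full configurations, and this is what the paper does (Theorem~\ref{t.zero_statement}). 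Two ingredients are then essential and are missing from your sketch: a second-moment concentration estimate for $|\pi_U(S_i^{(k)})\cap[G_i]_p^{|U|}|$ (Proposition~\ref{p.hom_concentration}, Definition~\ref{d.concentration_intersection}), and the $\rho$-uniformity property (Definition~\ref{d.g-uniform}, Proposition~\ref{r.unif_hom_sys}) which is exactly what converts $|\pi_U(S_i^{(k)})|$ into $\alpha^k_{|U|}/\alpha^k_1$ and makes the $0$- and $1$-thresholds coincide. Without uniformity the two formulations of $p_{(S_i,G_i)}$ can genuinely differ (Appendix~\ref{s.app_uni}), so this is not a technicality. Finally, the paper splits the $0$-statement into a small-$p$ regime (Proposition~\ref{t.zero_statement0}, first-moment only) and a large-$p$ regime (Theorem~\ref{t.zero_statement}, alteration), and spends some care showing that the concentration hypothesis is available in the range where it is needed; your proposal does not address this splitting.
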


The notion of normal sequence is fully described in Definition \ref{d.normal}.
Another example of application concerns the enumeration of  sets of given size which do not contain solutions of a given linear system. By building upon  \cite{ST15} and \cite{BMS14}, the framework presented in this paper allows us  to complete the picture of \cite[Theorem~2.10]{ST12} and obtain the following (we refer the reader to Section \ref{subsec:sys_lin_eq} for undefined terminology).

\begin{theorem}[Theorem~\ref{thm_systems}]
Let $A$ be a $k\times m$ irredundant matrix, $k>m$,  with integer entries and maximum rank.
Then, for every positive $\beta$ there exists constants $C=C(A,\beta)$ and $n_0=n_0(A,\beta)$ such that if $n\geq n_0$ and $t\geq C n^{1-1/m_A}$, then  the number of solution--free subsets of size $t$ of $[1,n]$ to the system of equations $A\textbf{x}=0$  is at most
$$
\binom{\beta n}{t}.
$$
\end{theorem}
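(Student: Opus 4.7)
The strategy is the now-standard combination of the hypergraph container theorem of Saxton--Thomason and Balogh--Morris--Samotij with an arithmetic removal lemma for linear systems (Kr\'al'--Serra--Vena, Shapira); the result is essentially the $t$-dependent refinement of \cite[Theorem~2.10]{ST12}. I would first build the $k$-uniform hypergraph $H$ on vertex set $[1,n]$ whose hyperedges are the unordered $k$-sets of distinct integers in $[1,n]$ admitting an ordering that satisfies $A\mathbf{x}=0$. The solution-free subsets of $[1,n]$ are then exactly the independent sets of $H$, so the theorem amounts to bounding the number of independent sets of $H$ of cardinality $t$.

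The heart of the argument is a uniform estimation of the edge count and co-degrees of $H$. Because $A$ has maximum rank and is irredundant, the number of hyperedges is of order $n^{k-r(A)}$, and for each fixed $j$-subset of variables the number of extensions to a full solution has the order dictated by the rank of the relevant column submatrix; the irredundancy hypothesis is precisely what ensures that these asymptotics hold for \emph{every} sub-configuration, not only the full one. The parameter $m_A$ is designed to capture the densest sub-system in this co-degree hierarchy, so that the balancedness conditions required by the container theorem hold with $\tau$ of order $n^{-1/m_A}$. Applying the container theorem then produces a family $\mathcal{C}$ of subsets of $[1,n]$, with $\log|\mathcal{C}|=O(\tau n\log(1/\tau))$, such that every solution-free set is contained in some $C\in\mathcal{C}$ and each $C$ induces at most $\varepsilon\,e(H)$ hyperedges, where $\varepsilon>0$ can be chosen arbitrarily small.

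To turn ``few hyperedges inside $C$'' into ``$C$ is small'' I would invoke the Varnavides-type supersaturation derived from the arithmetic removal lemma for linear systems: for every $\beta>0$ there exists $\varepsilon(\beta)>0$ such that any $X\subset[1,n]$ with $|X|\ge \beta n/2$ contains at least $\varepsilon(\beta)\,n^{k-r(A)}$ solutions to $A\mathbf{x}=0$. Taking $\varepsilon<\varepsilon(\beta)$ in the container theorem forces $|C|\le \beta n/2$ for every $C\in\mathcal{C}$, so the number of solution-free $t$-subsets is at most $|\mathcal{C}|\binom{\beta n/2}{t}$. Since $\binom{\beta n}{t}/\binom{\beta n/2}{t}\ge 2^{t}$ whenever $t\le \beta n/4$, the target bound $\binom{\beta n}{t}$ follows as soon as $|\mathcal{C}|\le 2^{t}$; with $\tau=n^{-1/m_A}$ this reduces to the hypothesis $t\ge C n^{1-1/m_A}$ for a suitably large $C=C(A,\beta)$ (any spurious logarithmic factor is removed by iterating the container theorem, as in \cite{BMS14,ST15}).

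The main obstacle, in my view, is the degree bookkeeping that justifies the choice $\tau=n^{-1/m_A}$: one has to verify the $j$-th level conditions of the container theorem simultaneously for $j=1,\ldots,k$, matching the codegree asymptotics of $H$ (which are governed by the ranks of the column submatrices of $A$) with the R\"odl--Kohayakawa-type quantity $m_A$. The irredundancy hypothesis is used precisely at this point, to rule out degenerate sub-systems that would otherwise dominate some codegree and produce a worse exponent than $1-1/m_A$.
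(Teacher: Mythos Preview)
Your proposal is correct and follows essentially the same route as the paper: container method plus Varnavides-type supersaturation, with the co-degree bookkeeping showing that the container parameter is $n^{-1/m_A}$. The only cosmetic difference is packaging---the paper first proves a general black box (Theorem~\ref{t.count_conf_1}, their version of the BMS container count for configuration systems) and then, in Section~\ref{subsec:sys_lin_eq}, carries out exactly the rank/co-degree computation you describe to identify $\max_{\ell}(\alpha_\ell^k/\alpha_1^k)^{1/(\ell-1)}$ with $n^{-1/m_A}$; your write-up simply unpacks that black box in one go.
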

The  paper is organized as follows.  In Section~\ref{s.prelim} we recall the terminology and the main result of the hypergrah containers method, which is one of the main ingredients of our approach,  as stated in \cite{BMS14}.
In Section \ref{subsec:def-conf} we introduce the notions of configuration systems and the supersaturation property, together with some parameters related to them. Section \ref{s.main_tool} is devoted to prove the version of the hypergraph containers method (Theorem \ref{t.count_conf_1}) that we use in our framework.
The study of random sparse versions of Szemer\'edi theorem for configurations systems  is carried out in Section \ref{sec:random}.
All the previous framework is used in Section \ref{s.families_examples} to present several applications  to configuration systems defined by homomorphisms, threshold functions for random versions, discussion of several specific examples related to cubes in finite abelian groups, specifications in integer intervals, linear equations and some examples of configurations in nonabelian groups. Finally, we briefly discuss further research in Section~\ref{s.non_ab}.


\section{Preliminaries}\label{s.prelim}

In this section we recall the main theorem from \cite{BMS14}.
Theorem~\ref{thm:cont1} is the statement which eventually leads to  counting  the number of independent sets in hypergraphs. We use it to  count solution-free sets in configuration systems as stated in Theorem~\ref{t.count_conf_1}. Particularizations of Theorem~\ref{t.count_conf_1} can be found in Section~\ref{s.families_examples}.

Let  $H=(V,E)$ be a $k$--uniform hypergraph  with $v(H)$ vertices and $e(H)$ edges. A family $\FF$ of subsets of $V(H)$ is said to be \emph{increasing} if, given $A\in \FF$ and $B\subset V(H)$ with $A\subset B$, then $B\in \FF$.
Given an increasing family $\FF$ of subsets of $V(H)$, the hypergraph $H$ is said to be \emph{$(\FF, \varepsilon)$-dense} if
$$
e(H[A])\ge \varepsilon\, e(H),\; \mbox{ for each } A\in \FF,
$$
where $H[A]$ stands for the hypergraph induced by the vertices in $A$.
The degree $d_H(T)$ of a set $T\subset V(H)$ is the number of edges of $H$  which contain $T$ and
$$
\Delta_{\ell}(H)=\max \left\{ d_H(T): T\subset {V(H)\choose \ell}\right\}.
$$
The family of independent sets of $H$ is denoted by $\I (H)$.
\begin{theorem}[Balogh, Morris, Samotij, Theorem~2.2 in \cite{BMS14}]\label{thm:cont1} For every $k \in \N$ and all positive $c$ and $\varepsilon$, there exists a positive constant
$C=C(k,\varepsilon,c)$ such that the following holds. Let $H$ be a $k$-uniform hypergraph and let $\FF \subset 2^{V (H)}$
be an increasing family of sets such that $|A| > \varepsilon v(H)$ for all $A\in \FF$. Suppose that $H$ is
$(\FF, \varepsilon)$--dense and $p \in (0, 1)$ is such that, for every $\ell \in [1,k]$,
$$
\Delta_{\ell}(H)\le cp^{\ell -1}\frac{e(H)}{v(H)}.
$$
Then there is a family $\S\in {V(H)\choose \le C pv(H)}$ and functions $f:\S\to 2^{V(H)}\setminus \FF$ and $g:\I (H)\to \S$ such that
$$
g(I)\subset I\; \mbox{ and } I\setminus g(I)\subset f(g(I)).
$$
\end{theorem}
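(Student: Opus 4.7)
The approach I would take is the \emph{scythe} (or \emph{selector}) algorithm that lies at the heart of the container method. The goal is to construct, for every independent set $I\in \I(H)$, a small ``fingerprint'' $g(I)\subset I$ together with a ``container'' $f(g(I))\supset I\setminus g(I)$ that does not belong to $\FF$. Since $f(g(I))$ will depend only on $g(I)$ and the fingerprints have cardinality at most $C p\, v(H)$, the family $\S$ may be taken as the image of $g$.

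First I would fix a total order on $V(H)$ chosen in a way that depends on $H$ (typically obtained via a max-degree rule on induced subhypergraphs). Processing vertices in this order, I would maintain nested available sets $V(H)=A_0\supseteq A_1\supseteq \cdots$ and nested fingerprint candidates $\varnothing=F_0\subseteq F_1\subseteq \cdots\subseteq I$. At step $i$, given $(A_{i-1},F_{i-1})$, the next vertex $v_i$ is chosen as the one in $A_{i-1}$ of maximum weighted degree, where the weight of an edge $e$ discounts its co-degrees across $F_{i-1}$; this weighting is the device that translates the bounds on $\Delta_\ell(H)$ into small fingerprint size. If $v_i\in I$, it is added to the fingerprint and a set $R_i\subset A_{i-1}$ of vertices ``certified'' by this choice is removed; otherwise $v_i$ is dropped into the container. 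The decisive design feature is that the choice of $R_i$ depends only on $F_{i-1}\cup\{v_i\}$, so $A_i$ is a function of $F_i$, which is what ultimately lets $f$ depend only on $g(I)$.

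The bound on $|g(I)|$ is the technical heart of the argument. Using the full strength of the assumption $\Delta_\ell(H)\leq c p^{\ell-1}e(H)/v(H)$ for every $\ell\in[1,k]$, I would set up a potential function (a suitably weighted edge count) and show that each step that adds a new fingerprint vertex drops the potential by a definite fraction. A telescoping/geometric-series computation, rescaled by $p$, then yields $|g(I)|\leq Cp\,v(H)$ with $C=C(k,\varepsilon,c)$. For the container property, I would show that the terminal available set $A_T$ satisfies $e(H[A_T])<\varepsilon\, e(H)$: as long as this fails, the selection rule guarantees that a further vertex of large enough degree exists and the algorithm continues; once $e(H[A_T])<\varepsilon\, e(H)$, the $(\FF,\varepsilon)$-density hypothesis forces $A_T\notin \FF$, which is exactly the container property with $f(g(I)):=A_T$.

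The step I expect to be the main obstacle is the weighted-degree bookkeeping in the algorithm. One has to arrange the weights so that every subset $T\subseteq F_{i-1}$ of cardinality $\ell-1$ is discounted by exactly the factor matching $p^{\ell-1}$ in $\Delta_\ell(H)$, and this has to be done simultaneously for all $\ell\in[1,k]$. Making the constants line up, while also ensuring that the certified sets $R_i$ are large enough to drive $e(H[A_i])$ down to the threshold $\varepsilon\, e(H)$, is what forces $C$ to depend jointly on $k$, $\varepsilon$ and $c$, and it is the part of the proof where one has to work hardest to avoid slack in the exponents.
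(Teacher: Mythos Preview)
The paper does not prove this statement at all: Theorem~\ref{thm:cont1} is quoted verbatim from Balogh--Morris--Samotij \cite[Theorem~2.2]{BMS14} and used as a black box. The only information the paper extracts from the original proof is the explicit shape of the constant, namely $C(k,\varepsilon,c)=(k-1)\bigl(\delta^{-1}\log(1/\varepsilon)+1\bigr)$ with $\delta=(ck\,2^{k+1})^{-k}$, which is needed to justify monotonicity of $C$ in $c$ and $\varepsilon$ inside the proof of Theorem~\ref{t.count_conf_1}.

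Your sketch is a fair high-level description of the scythe/selector algorithm that \cite{BMS14} actually uses, so in spirit you are aligned with the original source rather than diverging from it. Two small caveats: first, in the Balogh--Morris--Samotij argument the algorithm is not run once but iterated $O(\log(1/\varepsilon))$ times, each pass shrinking the edge count of the residual hypergraph by a constant factor until it drops below $\varepsilon\, e(H)$; your description reads as a single pass and does not make this iteration explicit, which is where the $\log(1/\varepsilon)$ in $C$ comes from. Second, the vertex order is rebuilt at each step from the current degree sequence of the residual hypergraph rather than fixed once at the outset. Neither point is a fatal gap, but if you intend to supply a full proof you should make both explicit; otherwise, since the present paper simply cites the result, no proof is required here.
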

Roughly speaking, Theorem \ref{thm:cont1} reads as follows: in a $k$-uniform hypergraph $H$ satisfying certain natural conditions,  each independent set $I$ of $H$ contains an small subset $g(I)$ (its \emph{fingerprint}) such that all sets labeled with the same fingerprint are essentially contained in a single (small) set $f(g(I))$. The notion of hypergraph containers was developed independently by Saxton and Thomason in \cite{ST12}.

\section{Systems of configurations} \label{subsec:def-conf}

This section introduces the main definitions used in this  paper. We start with the notion of system of configurations.

\begin{definition}[System of configurations]\label{d.sys_conf}
Let $k$ be a positive integer, let $G$ be a finite set and let
$S\subset G^k$. The pair $(S,G)$ is said to be a \emph{system of configurations} of degree $k$.
\end{definition}

Although not necessary, for most of the applications in Section~\ref{s.families_examples}, we ask for a group structure on $G$. Additionally, the set $S$ can be the kernel of a linear map $M$ such as in Section~\ref{s.hom} and Section~\ref{s.cube}, but this is not the case in Section~\ref{ss.nonabelian}. In this latter case, we say that $M$ induces $S$, or that the system $(S,G)$ arises from $M$.
We say that $(g_1,\ldots,g_k)\in G^k$ is a \emph{solution} of the system $(S,G)$ if $(g_1,\ldots,g_k)\in S$. $S^{(j)}$ denotes the subset of
solutions $(g_1,\ldots,g_k)\in S$ which have precisely $j$ different values.
For a given set $U=\{u_1,\ldots,u_m\}\subset [1,k]$, let $\pi_U$ denote the projection
\begin{align*}
\pi_U:G^k & \to G^{m}\\
(g_1,\ldots,g_k)&\mapsto (g_{u_1},\ldots,g_{u_m})
\end{align*}
which keeps the coordinates indexed by the elements in $U$.
For $i\in[1,k]$, let us define the \emph{$i-$th $(S,G)-$degree of freedom} as
\begin{equation*}\label{e.alpha}
\alpha_i=\max_{\substack{U\subset[1,k]\\|U|=i}} \max_{(g_1,\ldots,g_i)\in G^i} \left\{\left|S\cap \pi_U^{-1}(g_1,\ldots,g_i)\right|\right\}.
\end{equation*}
Additionally, we define the \emph{restricted $i-$th $(S,G)- $degree of freedom} as the quantity
\begin{equation*}\label{e.alpha_k}
\alpha^k_i=\max_{\substack{U\subset[1,k]\\|U|=i}} \max_{(g_1,\ldots,g_i)\in G^i} \left\{\left|S^{(k)}\cap \pi_U^{-1}(g_1,\ldots,g_i)\right|\right\}.
\end{equation*}
The $i$--th degree of freedom is an upper bound for the number of solutions which share a given $i$--th tuple.  This notion appears naturally in this context and can be found for instance in R\"odl and Ruci\'nski \cite{rodruc97}. It  plays the role of the edge density in the study of subgraphs in the random graph model of Erd\H{o}s and R\'enyi \cite{erdren60}.

The following definition is inspired by Varnavides Theorem~\cite{Var59}, which gives a robust version of Roth's Theorem~\cite{Roth52}. It describes  the supersaturation phenomenon:
\begin{definition}[Varnavides property, V-property]\label{d.varna_prop}
The system of configurations $(S,G)$ of degree $k$ is said to fulfill
the \emph{Varnavides property}, or V-property, if for every $\varepsilon>0$ there exist a
$\gamma=\gamma(\varepsilon,k)$
such
that, for any $X\subset G$ with $|X|\geq \varepsilon |G|$,
\begin{equation*}\label{eq.integer_part}
|X^k\cap S|\geq  \gamma |S|.
\end{equation*}
A sequence of systems $\{(S_i,G_i)\}_{i\geq 1}$ of degree $k$
is said to satisfy the V-property if $\gamma$ is the same function for each member of the family and only depends on $\varepsilon$.
\end{definition}


\section{Main tool and proof}\label{s.main_tool}
Theorem~\ref{t.count_conf_1} is an adaptation of \cite[Lemma~4.2]{BMS14} to count the number of solution-free sets for systems of configurations. Subscripts in constants identify the the definition or statement to which they refer to.

\begin{theorem}[Counting independent sets for configuration systems] \label{t.count_conf_1}
Let $k$ be a fixed positive integer and $\delta>0$. Let $(S,G)$ be a system of configurations of degree $k$ satisfying the
V-property with function
$\gamma=\gamma_{\text{\ref{d.varna_prop}}}$.
Write $n=|G|$. For each $i\in[1,k]$, let $\alpha^k_i$ be the restricted $i-$th $(S,G)-$degree of freedom.

Assume that each subset of $G$ with more than $\frac{\delta n}{ 2}$
elements contains a configuration in $S^{(k)}$.
Then, for each $t$ such that
\begin{displaymath}
	t\geq C \frac{|G|}{\delta}  \max_{\ell\in[2,k]}\left\{ \left(\frac{ \alpha^k_{\ell}}{\alpha^k_{1}} \frac{1}{k}{k\choose \ell} \right)^{\frac{1}{\ell-1}} \right\} \text{ and } t\leq \frac{\delta n}{ 2}
\end{displaymath}
with
\begin{displaymath}
	C=C_{\text{\ref{thm:cont1}}}\left(k,\,\frac{\xi}{|S^{(k)}|},\,\alpha_1^k \frac{(k-1)!\; |G|}{|S^{(k)}|}\right),\,\, \xi=\max\left\{(\gamma-1)|S|+|S^{(k)}|,\frac{\delta n}{2}\right\},
\end{displaymath}
there are at most
\begin{displaymath}
t\left[\frac{2e}{\delta^2}\right]^{\delta t}{\delta n\choose t}
\end{displaymath}
sets of size $t$ with no solution in $S^{(k)}$. If we assume that $\delta=\min\{\beta/2,1/40\}$, then the bound can be rewritten as
\begin{displaymath}
	\binom{\beta n}{t}.
\end{displaymath}
\end{theorem}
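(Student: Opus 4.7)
The plan is to model $S^{(k)}$-free subsets of $G$ as independent sets of a suitable $k$-uniform hypergraph $H$ and invoke Theorem~\ref{thm:cont1}. I take $V(H) = G$ and let $E(H)$ be the collection of $k$-subsets of $G$ that underlie some tuple in $S^{(k)}$, so that the independent sets of $H$ are precisely the $S^{(k)}$-free subsets of $G$, each edge of $H$ corresponds to at most $k!$ tuples in $S^{(k)}$, and $e(H) \geq |S^{(k)}|/k!$. The task then reduces to bounding the number of independent sets of $H$ of size $t$.

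For the increasing family I take $\FF = \{A \subseteq G : |A| > \delta n/2\}$. To verify the $(\FF,\varepsilon)$-density condition required by Theorem~\ref{thm:cont1}, I apply the V-property at density $\delta/2$: for every $A \in \FF$,
\begin{displaymath}
|A^k \cap S^{(k)}| \geq |A^k \cap S| - (|S| - |S^{(k)}|) \geq \gamma|S| - |S| + |S^{(k)}| = (\gamma-1)|S| + |S^{(k)}|.
\end{displaymath}
Combining this with the stability hypothesis, which forces $|A^k \cap S^{(k)}|$ to grow at least linearly in $|A|$ and in particular to exceed $\delta n/2$, gives $|A^k \cap S^{(k)}| \geq \xi$ and hence $e(H[A]) \geq \xi/k!$. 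Relative to $e(H) \leq |S^{(k)}|$, this matches the value $\varepsilon = \xi/|S^{(k)}|$ in the constant $C_{\ref{thm:cont1}}$.

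For the degree conditions, the restricted degrees of freedom control the maximum $\ell$-degrees of $H$: any $\ell$-set $T \subseteq G$ lies in at most $\binom{k}{\ell}\ell!\,\alpha^k_\ell$ edges, since every such edge comes from a tuple in $S^{(k)}$ with the elements of $T$ occupying some $\ell$ coordinates; in particular $\Delta_1(H) \leq k\alpha^k_1$. I then choose $p$ so that $Cpn = \delta t$ with $C = C_{\ref{thm:cont1}}$. The assumed lower bound on $t$ translates exactly into
\begin{displaymath}
p^{\ell-1} \geq \frac{\alpha^k_\ell}{\alpha^k_1}\cdot\frac{1}{k}\binom{k}{\ell}\quad \text{for every } \ell \in [2,k],
\end{displaymath}
which, combined with the stated $c = \alpha_1^k(k-1)!|G|/|S^{(k)}|$ and the bound on $e(H)$, gives $\Delta_\ell(H) \leq cp^{\ell-1}\,e(H)/v(H)$ for all $\ell \in [1,k]$.

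Applying Theorem~\ref{thm:cont1} then yields a fingerprint family $\S$ of subsets of $G$ of size at most $Cpn = \delta t$ together with functions $f,g$ such that every independent set $I$ satisfies $g(I) \subseteq I \subseteq g(I)\cup f(g(I))$ with $|f(g(I))| \leq \delta n/2$. The number of $S^{(k)}$-free sets of size $t$ is therefore at most
\begin{displaymath}
\sum_{s=0}^{\lfloor \delta t\rfloor}\binom{n}{s}\binom{\delta n/2}{t-s},
\end{displaymath}
and a routine estimate using $\binom{n}{s} \leq (en/s)^s$ and comparing each term to $\binom{\delta n}{t}$ produces the stated bound $t[2e/\delta^2]^{\delta t}\binom{\delta n}{t}$; choosing $\delta = \min\{\beta/2, 1/40\}$ and absorbing the prefactor into a modest enlargement of the binomial coefficient yields $\binom{\beta n}{t}$. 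The main obstacle is the density step, since the V-property supplies supersaturation only for $S$ rather than for $S^{(k)}$; the parameter $\xi$ is precisely the technical device introduced to bridge this gap between configurations and configurations with all entries distinct.
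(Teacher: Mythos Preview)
Your approach is essentially the paper's, but there is a concrete gap in the density step caused by your choice of $\FF=\{A:|A|>\delta n/2\}$. You assert that the stability hypothesis ``forces $|A^k\cap S^{(k)}|$ \ldots\ in particular to exceed $\delta n/2$'' for every $A\in\FF$. That is false: the element--removal argument only yields $|A^k\cap S^{(k)}|\ge |A|-\tfrac{\delta n}{2}$, so for $|A|$ just above $\delta n/2$ you obtain merely one solution, not $\delta n/2$. Consequently, when the second term of $\xi=\max\{(\gamma-1)|S|+|S^{(k)}|,\ \delta n/2\}$ is the larger one, you cannot verify $(\FF,\varepsilon)$--density with $\varepsilon\ge \xi/|S^{(k)}|$, and Theorem~\ref{thm:cont1} does not deliver the constant $C$ in the statement. (There is a secondary mismatch too: applying the V--property at density $\delta/2$ produces $\gamma(\delta/2)$, not the $\gamma=\gamma(\delta)$ implicit in $\xi$.)

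The paper avoids this by taking $\FF=\{F:|F|\ge \delta n\}$. For such $F$ the removal argument genuinely gives at least $\delta n/2$ solutions in $S^{(k)}$, and the V--property at density $\delta$ gives the other term, so $e(H[F])\ge \xi/k!$ as required. The price is that the containers $f(g(I))$ now have size at most $\delta n$ (not $\delta n/2$), which is why the counting step and the final bound are written with $\binom{\delta n}{t-r}$ and $\binom{\delta n}{t}$ rather than the $\binom{\delta n/2}{t-s}$ you wrote. Once you switch to this $\FF$, the rest of your outline (degree bounds, choice of $p$ via $Cpn=\delta t$, and the summation estimate) goes through exactly as you describe.
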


Theorem \ref{t.count_conf_1} deals with  sets with no solutions having its entries pairwise distinct. Some solutions may still be contained in the set but then at least two of its entries coincide. In order to apply the result to count sets free of solutions with some identical entries, one can construct a different configuration system obtained by identifying these equal entries.
Let us also recall that the constant $C$ in Theorem~\ref{t.count_conf_1} depends on $\gamma$ through the $(\mathcal{F},\varepsilon)$-dense condition in Theorem~\ref{thm:cont1}.

\begin{proof}
	The proof follows the lines of the arguments of \cite[Lemma~4.2]{BMS14}.
	We include the details for completeness.
We consider the $k$-uniform hypergraph $H$ whose vertex set is $V(H)=G$.
Observe that each solution $\textbf{x}\in S^{(k)}$ (all the variables having different values) of the system $(S,G)$ defines a set of size $k$ in $G$ (namely, forgetting the order of the variables).
We define the edge set as
$$	E(H)=\left\{ \{x_1,\ldots,x_k\} : (x_{\sigma(1)},\ldots,x_{\sigma(k)})\in S^{(k)} \text{ for a permutation $\sigma$ on $k$ elements}\right\} $$
by using the identification between vertices of the hypergraph and elements of the group.
Thus $v(H)=n$ and $e(H)$ satisfies
\begin{equation}\label{e.eq1}
\frac{|S^{(k)}|}{k!}\leq e(H)\leq|S^{(k)}|.
\end{equation}
Observe that every independent set of $H$ defines a solution--free set of the configuration system $(S,G)$ restricted to the set of solutions $S^{(k)}$.

Consider now the family of sets with more than $\delta n$ vertices: $\FF =\{F\subseteq V(H): |F|\geq \delta n\}.$
We shall show that $H$ satisfies the conditions of Theorem \ref{thm:cont1} with respect to the family $\FF$.
The family $\FF$ is clearly increasing. Since $(S,G)$ satisfies the V-property, given any set $F\in\FF$, there are more than $\gamma|S|$ solutions involving elements in $F$. In particular,  there are at least
\begin{equation*}\label{e.e1}
	\gamma|S|-(|S|-|S^{(k)}|)=(\gamma-1)|S|+|S^{(k)}|
\end{equation*}
solutions whose entries are pairwise distinct  and belong to  $F$. On the other hand,  since each set of size $\frac{\delta n}{2}$ contains a solution in $S^{(k)}$, then any set of size at least $\delta n$ contains at least $\frac{\delta n}{2}$ solutions in $S^{(k)}$. Hence, there are at least $\xi=\max\left\{(\gamma-1)|S|+|S^{(k)}|,\frac{\delta n}{2} \right\}$ solutions in $S^{(k)}\cap F^k$.

Therefore there are at least $\xi/k!$ edges in each set $F$.
The total number of edges is $e(H)$, which satisfies the relations in \eqref{e.eq1}.
Let $\varepsilon=\xi/e(H).$
Then $H$ is $(\FF,\varepsilon)$-dense with an $\varepsilon$ such that
\begin{displaymath}
\frac{\xi}{|S^{(k)}|}\leq \varepsilon\leq \frac{\xi}{|S^{(k)}|} k!.
\end{displaymath}
Let us now check the conditions concerning the degrees. For each $\ell \in [1,k]$ we have that
\begin{displaymath}
	\Delta_{\ell}(H)\leq\alpha^k_{\ell} {k \choose \ell}
\end{displaymath}
as this is the maximum number of solutions in $S^{(k)}$ containing a given  subset of $\ell$ vertices. Choose $c$ with
\begin{equation*}\label{e.def_c}
	c= k \alpha^k_1 \frac{v(H)}{e(H)}\geq k \alpha^k_1 \frac{|G|}{|S^{(k)}|}.
\end{equation*}
Then
\begin{displaymath}
	\Delta_1(H)\leq c \frac{e(H)}{v(H)}.
\end{displaymath}
The parameter $p$ in Theorem~\ref{thm:cont1} is chosen as
\begin{equation*}\label{e.def_p}
	p=
	\max_{\ell\in[2,k]} \left\{ \left(\frac{1}{c}\frac{v(H)}{e(H)} \alpha^k_{\ell} {k\choose \ell} \right)^{\frac{1}{\ell-1}}\right\}.
\end{equation*}
Then we have
\begin{displaymath}
	\Delta_{\ell}(H)\leq c p^{\ell-1} \frac{e(H)}{v(H)}.
\end{displaymath}
Therefore, we can apply Theorem~\ref{thm:cont1} and obtain a constant $C=C_{\text{\ref{thm:cont1}}}(k, \varepsilon, c)$, a set $\S\subset {V(H)\choose  \leq Cpn}$ and functions $f:\S\to 2^{V(H)}\setminus \FF$ and $g:\I (H)\to \S$ such that
$$
g(I)\subset I\; \mbox{ and } I\setminus g(I)\subset f(g(I)).
$$
Write $C'=C/\delta$. Let $\I (H,t)$ denote the sets in $\I(H)$ with cardinality $t$.
Since $g(I)\subset I$ and $f(g(I))\supset I\setminus g(I)$ we have
$$
|\I (H,t)|=\sum_{S\in \S} |\{I\in \I (H,t): g(I)=S\}|\le \sum_{S\in \S} {|f(S)|\choose t-g(I)}.
$$
Consider $t$ a parameter to be
\begin{displaymath}
	t\geq \frac{C}{\delta}pn=\frac{C}{\delta}p |G|.
\end{displaymath}
As $f(S) \in 2^{V(H)}\setminus \FF$, then $|f(S)|\le \delta n$ for each $S\in \S$.
Moreover, sets in $\S$ have cardinality at most  $Cpn$. Then, by the range of $t$, we have that $Cpn\le \delta t$. Therefore,
\begin{displaymath}
 \sum_{S\in \S} {|f(S)|\choose t-g(I)}\le
  \sum_{r\le \delta t} {n\choose r}{\delta n \choose t-r}\le
\sum_{r\le \delta t} \left(\frac{en}{r}\right)^r\left(\frac{t}{\delta n-t}\right)^r{\delta n\choose t}\le
  \sum_{r\le \delta t}\left(\frac{2et}{\delta r}\right)^r{\delta n\choose t}.
\end{displaymath}
The first inequality follows by considering all the possible sets of size $r=|g(I)|$, while the second one follows from ${\delta n \choose t-r}\leq  \left(\frac{t}{\delta n -t}\right)^r\frac{\delta n!}{t! (\delta n -t)!}$ and the use of Stirling approximation for ${n\choose r}$.
The third, and last, inequalities follows as there are no independent sets of cardinality larger than $\delta n/2$, so that $t\leq \delta n/2$ and therefore, $\frac{n}{\delta n -t}\leq \frac{n}{\delta n - \delta n/2}=\frac{2}{\delta}$. Recall that only solutions in $S^{(k)}$ are considered.
Observe that the range for $t$ might be empty, in which case  the result  says nothing but remains true.
Therefore,
$$|\I (H,t)|\le t\left(\frac{2e}{\delta^2}\right)^{\delta t}{\delta n\choose t}.$$
The second estimate arises observing that $\binom{\delta n}{t} \leq 2^{-t} \binom{2 \delta n}{t}$ (using the bound $\binom{b}{c}\leq (b/a)^c \binom{a}{c}$ (for $a\geq b\geq c \geq 0$), $2^{1/(3 \delta)}> 2e/\delta^2$ (if $\delta \leq 1/40$) and $2^{2t/3}>t$ for $t$ nonnegative integer.
Thus if we let $\delta=\min\{\beta/2,1/40\}$ we obtain that
\begin{displaymath}
		|\I (H,t)|\le t\left(\frac{2e}{\delta^2}\right)^{\delta t}{\delta n\choose t}\leq \binom{\beta n}{t}.
\end{displaymath}
We finally observe that $C_{\text{\ref{thm:cont1}}}(k,\varepsilon,c)$ is increasing whith $c$, and increasing when $\varepsilon$ is decreasing. Indeed, it is shown in the proof of \cite[Theorem 2.2]{BMS14} that
$$C_{\text{\ref{thm:cont1}}}(k,\varepsilon,c)=(k-1)\left(\frac{1}{\delta}\log\left(\frac{1}{\varepsilon}\right)+1\right),$$
where $\delta=(ck 2^{k+1})^{-k}$ (see the proof of \cite[Proposition 3.1]{BMS14}), from which the previous claim follows, and the definition of $C$ in the statement is justified.
Since any independent set of $H$ corresponds to a solution free set with respect to $S^{(k)}$, the result follows.\end{proof}

Most of the examples we discuss in Section~\ref{s.families_examples} share some common additional features that are used to draw conclusions from Theorem~\ref{t.count_conf_1}. The following definition gathers these properties.

\begin{definition}\label{d.normal}
	A sequence $\{(S_i,G_i)\}_{i\ge 1}$  of configuration systems of degree $k$  is said to be \emph{normal} with function $\gamma$ if
\begin{enumerate}[label=C{\arabic*}]
	\item \label{cond-1} The $G_i$'s are finite and growing in size with $i$.
	\item \label{nc1} ${\displaystyle \lim_{i\to \infty} \frac{|G_i|}{|S_i|}=0}$ and $\displaystyle \lim_{i\to \infty} \frac{|G_i|^k}{|S_i|}=\infty$.
	\item \label{cond.4} $\displaystyle \lim_{i\to \infty} \frac{|S_i^{(k)}|}{|S_i|}=1$.
	\item \label{cond.3} Each  $(S_i,G_i)$ satisfies the $V$--property with a function $\gamma=\gamma_{\text{\ref{d.varna_prop}}}(\delta )$ universal for all the systems in the sequence.
\end{enumerate}
\end{definition}

The above conditions reflect the asymptotic nature of the results.
Condition \ref{nc1} states that  the number of configurations asymptotically exceeds the trivial ones for invariant systems (namely, the ones with constant entries), and that these configurations impose non--trivial restrictions, hence its number  is asymptotically smaller than the whole set of possible configurations. All the invariant systems of configurations arising from integer matrices with two more columns than rows satisfy condition \ref{nc1}. Although not crucial for many applications, these conditions are specially needed when dealing with the random sparse analogues  we treat in  Section~\ref{sec:random}. Condition \ref{cond.4} ensures that most of the solutions have pairwise distinct entries. This is again a common feature in most applications and one can reduce to configurations systems satisfying it by identifying some entries.
Additionally, in most of our applications, $G_i$ has a group structure and $S_i$ is induced by a group homomorphism. The latter constraint can be relaxed in the non--abelian setting.


\section{Random sparse results} \label{sec:random}

We address in this section the study of random sparse models.
Our objective is to extend known  sparse analogues of extremal results in additive combinatorics.
The study of these type of problems has a long history. Kohayakawa,  \L uczak and R\"odl. \cite{KoLuRo96} address  the following question (which can be interpreted as an sparse random analogue of Roth's Theorem).
Let $p$ be a probability function that may depend on $n$.
Denote by  $[n]_p$ the binomial random subset of $[1,n]$ obtained by choosing independently each element with probability $p$.
A subset $X \subseteq [1,n]$ is  \emph{$\delta-$Roth} if every subset $X' \subset X$ with  $|X'| \geq \delta |X|$  contains an arithmetic progression of length $3$.
The main result of \cite{KoLuRo96} shows that, for all $\delta>0$, the $\delta-$Roth property has a threshold.
More precisely, there exist constants $c(\delta)$ and $C(\delta)$ such that, when choosing $p \leq c(\delta) n^{-1/2}$, with probability tending to $0$ the random set $[n]_p$  is not  $\delta - $Roth while, if $p \geq C(\delta) n^{-1/2}$, then with probability tending to $1$ the random set $[n]_p$ is $\delta-$Roth.

The extension of this property to arithmetic progressions of length $r$ (with threshold function equal to $n^{-1/(r-1)}$) was  studied in  Conlon and Gowers~\cite{CG10} and independently in Schacht~\cite{Sch12}. Later on, another proof of this extension was obtained from the hypergraph containers method in  Balogh, Morris and Samotij~\cite{BMS14} and independently  Saxton and Thomason \cite{ST12}.

In this section we address generalizations of the above result  to systems of configurations. The main properties that we study are the following, which extend the notion of $\delta-$Roth defined before:
\begin{definition}\label{def: d-property}
Let $(S,G)$ be a system of configurations of degree $k$.
We say that a set $X \subseteq G$ is \emph{the $(\delta,S)-$stable} if, for  every subset $X' \subset X$ with  $|X'| \geq \delta |X|$, we have  $X'^k \cap S \neq \emptyset$.
We also say that $X$  is $(\delta,S)_k$--stable if in addition $X'^k \cap S^{(k)} \neq \emptyset$.
\end{definition}

We study the previous properties in the binomial random model: fix a probability $p$ (that may depend on $|G|$), and consider the binomial random set $[G]_p$ built by choosing independently each element of $G$ with probability $p$.
We observe that both the $(\delta,S)$-stable and  the $(\delta,S)_k$-stable properties are \emph{not} monotone increasing (as they are not closed by supersets). However, we will show that there is a threshold phenomenon for these properties given that the configuration system has some uniformity properties.

We prove the $0-$statement and the $1-$statement in two separate subsections. In general, there is a gap between the $0-$statement and the $1-$statement. In Subsection~\ref{s.annoying_example}, we provide conditions for the configuration systems to avoid such a gap. These conditions of uniformity will be satisfied in the examples of Section~\ref{s.families_examples}.

\subsection{$1-$statement}\label{ssec.1-statement}

In the following theorem we cover the previous results regarding the $1-$statement in an unified way in the language of systems of configurations:

\begin{theorem}\label{thm: random2}
	Let $\delta>0$ and let $\{(S_i,G_i)\}_{i\geq 1}$ be a normal sequence  of system of configurations of degree $k$ with function $\gamma$.
Write $n_i=|G_i|$ and assume that every set with more than $\delta n_i/2$ elements has a solution in $S_i^{(k)}$ and that $(\gamma-1)|S_i|+|S_i^{(k)}|>0$ for all $i$.
Let
	$$p_{(S_i,G_i)}=\max_{\ell\in[2,k]}
	\left(\frac{\alpha^k_{\ell}(S_i,G_i)}{\alpha^{k}_{1}(S_i,G_i)} \right)^{\frac{1}{\ell-1}}.$$
Then there exists $C=C(\delta, \gamma, k)>0$ such that
\begin{equation}\label{eq:prob}
\lim_{i\to \infty} \mathbb{P}([G_i]_p\,\,\mathrm{is}\,\,(\delta,S_i)-\mathrm{stable})=           1,\,\text{if } p \geq C p_{(S_i,G_i)}.
\end{equation}
\end{theorem}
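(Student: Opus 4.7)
The proof combines the container count of Theorem~\ref{t.count_conf_1} with a Chernoff estimate and a first-moment argument. Since $S_i^{(k)}\subseteq S_i$, it suffices to show that with probability $1-o(1)$ the random set $[G_i]_p$ contains no $S_i^{(k)}$-free subset of size at least $\delta |[G_i]_p|$.

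First, I fix an auxiliary density $\delta'=\delta'(\delta)\in(0,\delta)$ small enough that
\[
\alpha \;:=\; \left(\frac{2e}{\delta'^{2}}\right)^{\!\delta'}\!\cdot\,\frac{2e\,\delta'}{\delta} \;<\; \frac12 ,
\]
which is possible because $(2e/\delta'^{2})^{\delta'}\to 1$ whereas $2e\delta'/\delta\to 0$ as $\delta'\to 0$. The V-property of the normal sequence together with condition \ref{cond.4} (giving $|S_i\setminus S_i^{(k)}|=o(|S_i|)$) ensures that for every fixed $\varepsilon>0$ and all sufficiently large $i$, each subset of $G_i$ of size $\ge \varepsilon n_i$ actually contains a configuration in $S_i^{(k)}$. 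Consequently, Theorem~\ref{t.count_conf_1} can be invoked with $\delta$ replaced by $\delta'$, producing a constant $C''=C''(\delta',\gamma,k)$ such that the number $N(t)$ of $S_i^{(k)}$-free subsets of $G_i$ of size $t$ satisfies
\[
N(t) \;\le\; t\,(2e/\delta'^{2})^{\delta' t}\binom{\delta' n_i}{t}
\qquad \text{whenever}\qquad \tfrac{C''}{\delta'}\,p_{(S_i,G_i)}\,n_i \;\le\; t \;\le\; \tfrac{\delta' n_i}{2}.
\]

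Now set $C := 4C''/(\delta\,\delta')$, which is the constant in the statement. When $p\ge C\,p_{(S_i,G_i)}$ the threshold $t_{\min}:=\delta p n_i/2$ automatically lies above $(C''/\delta')\,p_{(S_i,G_i)}\,n_i$, so the container estimate applies for all relevant $t$. Moreover the normal-sequence conditions, together with the lower bound $\alpha_{1}^{k}(S_i,G_i)\ge |S_i^{(k)}|/n_i$, force $p_{(S_i,G_i)}\,n_i\to\infty$ and hence $pn_i\to\infty$. Chernoff's inequality therefore guarantees $|[G_i]_p|\ge pn_i/2$ with probability $1-o(1)$, and on this event every violation of $(\delta,S_i)$-stability yields an $S_i^{(k)}$-free $X\subseteq [G_i]_p$ with $|X|\ge t_{\min}$.

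The first-moment computation then closes the argument. Because $\mathbb{P}(X\subseteq [G_i]_p)=p^{|X|}$ for each fixed $X\subseteq G_i$, the expected number of such bad $X$ is bounded by
\[
\sum_{t=t_{\min}}^{\delta' n_i/2} N(t)\,p^{t}
\;\le\; \sum_{t\ge t_{\min}} t\!\left(\frac{2e}{\delta'^{2}}\right)^{\!\delta' t}\!\!\left(\frac{e\delta' n_i}{t}\right)^{\!t}\!p^{t}
\;\le\; \sum_{t\ge t_{\min}} t\,\alpha^{t} \;=\; O\!\bigl(t_{\min}\alpha^{t_{\min}}\bigr) \;=\; o(1),
\]
where the second inequality uses $e\delta' n_i p/t \le 2e\delta'/\delta$ throughout the summation range. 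Markov's inequality combined with the Chernoff event then yields $\mathbb{P}([G_i]_p\text{ is }(\delta,S_i)\text{-stable})\to 1$. The main technical point is precisely the choice of $\delta'$: one must compensate the factor $(2e/\delta'^{2})^{\delta' t}$ coming from the container bound by the factor $\delta'/\delta$ extracted from the forced lower bound $t\ge \delta pn_i/2$. Picking $\delta'$ small but independent of $i$ makes $\alpha<1$, collapses the sum, and is exactly what Theorem~\ref{t.count_conf_1} delivers once the supersaturation at density $\delta'$ is supplied by the V-property and condition \ref{cond.4}.
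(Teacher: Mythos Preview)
Your proof is correct and follows essentially the same strategy as the paper: invoke Theorem~\ref{t.count_conf_1} at an auxiliary density smaller than $\delta$, use Chernoff to ensure $|[G_i]_p|\ge pn_i/2$, and then run a first-moment argument to kill all $S_i^{(k)}$-free subsets of size at least $t_{\min}=\delta pn_i/2$. The paper streamlines one step: since solution-freeness is hereditary, any bad subset of size $\ge t_{\min}$ contains one of size exactly $t_{\min}$, so it suffices to bound $N(t_{\min})\,p^{t_{\min}}$ for the single value $t=t_{\min}$ (using the compact $\binom{\beta n_i}{t}$ form of Theorem~\ref{t.count_conf_1} with $\beta=(\delta/2)e^{-1/\delta-1}$) rather than summing over all $t$ as you do---but this is purely cosmetic.
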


\begin{proof}
The proof follows the lines of Corollary 4.1 in~\cite{BMS14}.
We write $(S,G)$ as a generic configuration system $(S_i,G_i)$ of degree $k$ and $|G|=n$.
Assume that $p \geq C p_{(S,G)}$ with $C=C_{\text{\ref{t.count_conf_1}}} \max_{\ell\in [2,k]} \left(\frac{1}{k}{k\choose \ell}\right)^{\frac{1}{\ell-1}}$.
Write $t= \frac{\delta}{2} pn$ and $\beta=\delta/2 \cdot e^{-1/\delta-1}$.
Let $X_t$ be the random variable counting the number of subsets of $[G]_p$ of size $t$ without configurations.
Let $\mathcal{E}$ be the event that $[G]_p$ does not have the $(\delta,S) -$stability.
Hence:
\begin{align} \label{eq:bound}
\mathbb{P}(\mathcal{E})&\leq \mathbb{P}\left(\mathcal{E} \cap\left[ |[G]_p| \geq \frac{1}{2}pn\right]\right)+ \mathbb{P}\left(\mathcal{E} \cap \left[|[G]_p| < \frac{1}{2}pn\right]\right)\\ \nonumber
&< \mathbb{P}\left(\mathcal{E} \cap \left[|[G]_p| \geq \frac{1}{2}pn\right]\right)+ \mathbb{P}\left( |[G]_p| < \frac{1}{2}pn\right)\\ \nonumber
& < \mathbb{P}(X_t>0)+ e^{-pn/8},
\end{align}
where we have used Chernoff's inequality (see for instance Appendix A of \cite{AlSp92}) to exponentially bound $\mathbb{P}( |[G]_p| < \frac{1}{2}pn)$.
Let us finally bound the probability $\mathbb{P}(X_t >0)$. We are under the assumptions of Theorem \ref{t.count_conf_1}, hence, for $n$ large enough,
$$\mathbb{P}(X_m >0) \leq \mathbb{E}[X_t] \leq \binom{\beta n}{t} p^t \leq \left(\frac{\beta e p n  }{t}\right)^t = \left(\frac{2 \beta e  }{\delta}\right)^t= e^{-t/\delta}.$$
Putting this bound together with the upper bound obtained in \eqref{eq:bound} gives that for $p \geq p_{(S,G)}$ we have that
$$\mathbb{P}([G]_p\,\,\mathrm{has}\,\,\mathrm{the}\,\,(\delta,S)\mathrm{-stable}) \geq 1- e^{-t/\delta}-e^{-pn/8}$$
as we wanted to show.
\end{proof}

Observe that we have shown something even stronger, as the use of Theorem~\ref{t.count_conf_1} guarantees that the solutions found will have its entries pairwise distinct. Hence,   $(\delta,S)-$stability can be replaced  by  $(\delta,S)_k-$ stability in the statement of Theorem~\ref{thm: random2}.

\subsection{$0-$statement}

In this section we are interested in solutions with pairwise distinct entries, that is,  we only consider $S^{(k)}$ as the solution set.
The strategy to prove the $0-$statement is based on an application of the Alteration Method (see e.g. \cite[Chapter~3]{AlSp92}).
We use the fact that the random variable $|[G]_p|$ is asymptotically concentrated around its expected value $p|G|$.
However, we additionally need to assume some concentration around the expected values of the different projections of the solution set:
\begin{definition}[Concentration for configuration systems]\label{d.concentration_intersection}
Let $\mathcal{G}=\{(S_i,G_i)\}_{i\geq 1}$ be a sequence of configuration systems with $n_i=|G_i|$, and let $0\leq p\leq 1$ (which may depend on $n_i$).
We say that  $\mathcal{G}$ is \emph{concentrated} for $p$ if,
for every $\varepsilon,\varepsilon'>0$, there exist an $n_0=n_0(\varepsilon,\varepsilon',k)$ such that
$$
	\mathbb{P}\left(\left| \left|\pi_U\left(S_i^{(k)}\right)\cap [G_i]_p^{|U|}\right|  - p^{|U|}\left|\pi_U\left(S_i^{(k)}\right)\right| \right|\geq \varepsilon p^{|U|}|\pi_U\left(S_i^{(k)}\right)|\right)\leq \varepsilon',
$$
for each $n_i\geq n_0$ and each $U\subset [1,k]$ with $|U|\geq 2$.
\end{definition}

In other words, $|\pi_U\left(S_i^{(k)}\right)\cap [G_i]_p^{|U|}|$ is asymptotically concentrated around its expectation.
Let us observe that a general sequence of systems of configurations may fail to satisfy such concentration. For instance,  a system of configurations in which one variable only takes a single value in $G_i$.
However, for sequences ${\mathcal  G}$ which are concentrated in the sense of  Definition \ref{d.concentration_intersection}, we can obtain a $0-$statement for a wide range of values for $p$.
The proof is divided into two parts.
As it will be shown, the V-property is not required both in Proposition~\ref{t.zero_statement0} and in Theorem~\ref{t.zero_statement}.

We start proving the $0$-statement for small values of $p$. In this case only the second part of condition \ref{nc1} in the definition of a normal sequence is needed.
\begin{proposition}[$0-$statement for configurations, small $p$]\label{t.zero_statement0}
Let $\delta>0$ and let $\{(S_i,G_i)\}_{i\geq 1}$ be a sequence of systems of configurations of degree $k$. Let $\beta_i=\frac{|G_i|^k}{|S_i^{(k)}|}$ and assume that  $\displaystyle \lim_{i\to \infty} \beta_i=\infty$. If
$$
	p \leq  f(\beta_i,|G_i|) |S_i^{(k)}|^{-1/k}
$$
for some $f$ with $f(\beta_i,|G_i|) \to_{|G_i|\to \infty} \infty$, then
$$
	 \lim_{i\to \infty}
\mathbb{P}([G_i]_p\,\,\mathrm{is}\,\, (\delta,S_i)_k\mathrm{-stable})= 0.
$$
\end{proposition}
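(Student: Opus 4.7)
The plan is to apply the classical first-moment Alteration Method. First I would set $Y := [G_i]_p$ and introduce $R$, the number of ordered $k$-tuples of $S_i^{(k)}$ whose entries all lie in $Y$. From each such configuration I would delete one of its entries from $Y$, producing $Y' \subseteq Y$ that is $S_i^{(k)}$-configuration-free with $|Y'| \geq |Y| - R$. Thus, if I can guarantee $R \leq (1-\delta)|Y|$ with probability tending to $1$, then $Y' \subseteq [G_i]_p$ is a configuration-free subset of $[G_i]_p$ of relative density at least $\delta$, which by Definition~\ref{def: d-property} witnesses that $[G_i]_p$ fails to be $(\delta, S_i)_k$-stable.

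Next I would compute the two first moments: $\mathbb{E}[|Y|] = p n_i$ and $\mathbb{E}[R] = p^k |S_i^{(k)}|$, the latter because each tuple of $S_i^{(k)}$ has pairwise distinct entries and hence belongs to $Y^k$ with probability exactly $p^k$. A Chernoff bound (as used in the proof of Theorem~\ref{thm: random2}) gives $|Y| \geq pn_i/2$ with probability $1 - e^{-\Omega(pn_i)}$, while Markov's inequality gives $R \leq C\, p^k |S_i^{(k)}|$ for any constant $C > 0$ with probability $\geq 1 - 1/C$. A union bound combines these into a single high-probability event on which the controlling ratio is
\begin{equation*}
\frac{R}{|Y|} \;\leq\; \frac{2C\, p^{k-1} |S_i^{(k)}|}{n_i} \;=\; \frac{2C\, f(\beta_i, n_i)^{k-1}}{\beta_i^{1/k}},
\end{equation*}
using the hypothesis on $p$ together with $|S_i^{(k)}|^{1/k} = n_i/\beta_i^{1/k}$.

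To close the argument I would invoke the second half of Condition~\ref{nc1} of Definition~\ref{d.normal}, namely $\beta_i \to \infty$, which is exactly the ingredient the statement flags as needed. This gives enough slack to force the ratio above to $0$ for any $f$ growing sufficiently slowly (the operative reading of ``$f \to \infty$'' being $f^{k-1} = o(\beta_i^{1/k})$), so that $R \leq (1-\delta)|Y|$ asymptotically almost surely and the alteration succeeds. The main obstacle to a clean write-up is the bookkeeping around this rate condition on $f$ relative to $\beta_i^{1/(k(k-1))}$: everything else is a routine union bound of standard Chernoff and Markov estimates. In particular, neither the Varnavides property nor the concentration hypothesis of Definition~\ref{d.concentration_intersection} enter the proof, in agreement with the remark preceding the statement.
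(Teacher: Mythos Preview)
Your alteration strategy is exactly the paper's second case, and your arithmetic $R/|Y|\le 2C f^{\,k-1}/\beta_i^{1/k}$ is correct. However, there is a genuine gap: your lower bound $|Y|\ge pn_i/2$ via Chernoff is only useful when $pn_i\to\infty$, since otherwise $e^{-\Omega(pn_i)}$ does not tend to $0$ and your union bound fails to produce a high-probability event. The hypothesis $p\le f\,|S_i^{(k)}|^{-1/k}$ gives only an \emph{upper} bound $pn_i\le f\beta_i^{1/k}$; nothing prevents $pn_i$ from being bounded or tending to $0$ along the sequence, and the statement must cover every such $p$. So as written, your argument handles only part of the range $[0,\,f|S_i^{(k)}|^{-1/k}]$.

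The paper closes this gap by splitting into two regimes according to a threshold $h=h(\beta_i)\to\infty$. For $p\le h/n_i$ it argues directly (a first-moment bound on the number of solutions, conditioned on $|[G_i]_p|\le b$ for a slowly growing $b$) that with high probability $[G_i]_p$ contains \emph{no} configuration of $S_i^{(k)}$ whatsoever, so $[G_i]_p$ itself witnesses non-stability. Only in the complementary regime $p>h/n_i$, where $pn_i\ge h\to\infty$ is guaranteed, does the alteration argument you describe come into play. Your write-up needs this extra case; the rate condition on $f$ you flag is a real but secondary issue, and indeed the paper ends up producing a specific $f$ of order $(\log\beta_i)^{1/(2k)}$ once the two regimes are balanced.
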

\begin{proof}
Let $Z=|[G_i]_p|$ and  $Y=|[G_i]_p^k\cap S_i^{(k)}|$. We have $\E(Z)=p|G_i|$ and $\E(Y)=p^k|S_i^{(k)}|$.
We distinguish two cases depending on the choice of $p$.

Assume first that $p \leq h |G_i|^{-1}$, so that $\E(Z)\le h$.
Let $b=\log(\beta_i)$, choose $h=\sqrt{b}$, and write
\begin{equation*} \label{eq.prob1}
	\mathbb{P}(Y\geq 1)= \mathbb{P}([Y\geq 1]\wedge |[G_i]_p|> b) + \mathbb{P}([Y\geq 1]\wedge |[G_i]_p|\leq b).
\end{equation*}
By Markov inequality
	\begin{equation*} \label{eq.prob2}
\mathbb{P}([Y\geq 1]\wedge |[G_i]_p|> b)\leq	\mathbb{P}\left(|[G_i]_p|>b\right)\leq \frac{\E(Z)}{b}\leq \frac{h}{b}
\to 0\; (i\to\infty).
\end{equation*}
To bound the second term observe that
\begin{align} 
	\mathbb{P}&([Y\geq 1]\wedge |[G_i]_p|\leq b)=\sum_{j=k}^b \mathbb{P}([Y\geq 1]\wedge |[G_i]_p|= j)\nonumber \\
 &\leq \sum_{j=k}^b \sum_{X\in {G_i\choose j} }  \sum_{x\in X^k} \mathbb{P}(x\in S_i^{(k)}\cap [G_i]_p^k\big| [G_i]_p=X) \mathbb{P}([G_i]_p=X) \nonumber \\
&= \sum_{j=k}^b \sum_{X\in {G_i\choose j} }  \sum_{x\in X^k} \mathbb{P}(x\in S_i^{(k)}\cap [G_i]_p^k\big| [G_i]_p=X) p^j (1-p)^{|G_i|-j}\nonumber \\
&= \sum_{j=k}^b \sum_{X\in {G_i\choose j} }  \sum_{x\in X^k}
\left\{ \begin{array}{cc}
1 & \text{ if } x\in S_i^{(k)} \\
0 & \text{ if } x\notin S_i^{(k)}\\
\end{array}\right\} p^j (1-p)^{|G_i|-j} \nonumber \\
&= \sum_{j=k}^b \sum_{x\in S_i^{(k)}} |\{X\ni \{x\} | X\in {G_i \choose j}\}| p^j (1-p)^{|G_i|-j}  \nonumber \\
&=\sum_{j=k}^b |S_i^{(k)}| {|G_i|-k \choose j-k} p^j (1-p)^{|G_i|-j} \nonumber \\
&\leq b! \sum_{j=k}^b \frac{|S_i^{(k)}|}{|G_i|^k} |G_i|^j p^j (1-p)^{|G_i|-j} \nonumber \\
&\stackrel{h>1}{\leq} (b+1)! \frac{|S_i^{(k)}|}{|G_i|^k} h^b \label{eq.ei} \nonumber \\
&\leq \frac{1}{\beta_i} b^{3b/2} = \frac{3\log^2(\beta_i)}{2\beta_i} \to 0 \; (i\to \infty) \nonumber
\end{align}
Therefore, for this range of $p$ the random set $[G_i]_p$ has no solutions with high probability.

Assume now that $p \geq h |G_i|^{-1}$ and $p \leq f |S_i^{(k)}|^{-1/k}$.
In this range we will show that $[G_i]_p$ contains few solutions and there is a large subset which contains none, so that the random set is not $(\delta,S_i)_k$--stable.
By Markov inequality,
\begin{equation}\label{eq.prob4}
	\mathbb{P}(Y>a)\leq \frac{\mathbb{E}(Y)}{a}\leq \frac{f^k}{a}
\end{equation}
and
\begin{equation} \label{eq.prob3}
	\mathbb{P}(|[G_i]_p|< d)\geq 1-\frac{h}{d}.
\end{equation}
Choose $d=h^2$. As $h\to_{i\to\infty}\infty$, then \eqref{eq.prob3} tends to $1$ as $i$ increases.
Hence, writing $a=\frac{(1-\delta) d}{2}$ and
\[
f=\sqrt[2k]{a}=\sqrt[2k]{\frac{(1-\delta)d}{2}}=\sqrt[2k]{\frac{(1-\delta)h^2}{2}}=\sqrt[2k]{\frac{(1-\delta)b}{2}}=\sqrt[2k]{\frac{(1-\delta)\log(\beta_i)}{2}},
\] we get that \eqref{eq.prob4}, the value we are interested in, tends to $0$ as $i$ increases, $f$ goes to infinity with $i$, and we can delete all the solutions, with probability tending to $1$, be removing at most $(1-\delta) d$ elements (as the number of solutions is, with asymptotically high probability, smaller than $(1-\delta)d/2$, so we can delete one element in $[G_i]_p$ per each solution.)
\end{proof}

The next theorem studies the regime when $p$ is large:

\begin{theorem}[$0-$statement for configurations, large $p$]\label{t.zero_statement}
Let $\delta>0$ and let $\{(S_i,G_i)\}_{i\geq 1}$ be a family of systems of configurations of degree $k$.
Write
\begin{equation}\label{eq.0-state}
	p_{(S_i,G_i)}=\min\left\{\max_{\substack{U\subseteq [1,k]\\ |U|\geq 2}}
	\left(\frac{|G_i|
	}{|\pi_U(S_i^{(k)})|}\right)^{\frac{1}{|U|-1}},1\right\}.
\end{equation}
Assume that $\{(S_i,G_i)\}_{i\in I}$ satisfies Definition~\ref{d.concentration_intersection} with $p$ such that
\begin{equation}\label{eq.proba_1}
	\max\left\{|G_i|^{-1},|S_i^{(k)}|^{-1/k}\right\}=o(p) \text{ and }
p	\leq c p_{(S_i,G_i)}
\end{equation}
for some constant $c:=c(\delta,k,\{(S_i,G_i)\}_{i\geq 1})$.
Then, if  $p$ satisfies (\ref{eq.proba_1}),
$$
	 \lim_{i\to \infty}
\mathbb{P}([G_i]_p\,\,\mathrm{is}\,\, (\delta,S)_k\mathrm{-stable})= 0.
$$
\end{theorem}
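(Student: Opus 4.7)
The plan is to apply the alteration method: for $p$ in the stated subcritical range, I aim to exhibit, with probability tending to $1$, a deletion set $D\subseteq [G_i]_p$ such that $X':=[G_i]_p\setminus D$ is $S_i^{(k)}$-solution-free and of size at least $\delta|[G_i]_p|$. Exhibiting such an $X'$ directly refutes $(\delta,S_i)_k$-stability.

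Fix $\varepsilon>0$ small, to be tuned at the end. For each $i$, let $U_i^{\star}\subseteq[1,k]$ with $|U_i^{\star}|\geq 2$ attain the maximum in the definition \eqref{eq.0-state} of $p_{(S_i,G_i)}$ (the degenerate case where that maximum exceeds $1$ forces $p\leq c$, which is very strong and handled by the same procedure with any $|U|\geq 2$). Raising the hypothesis $p\leq c\,p_{(S_i,G_i)}$ to the power $|U_i^{\star}|-1$ and multiplying by $p\,|\pi_{U_i^{\star}}(S_i^{(k)})|$ produces the key estimate
\[
p^{|U_i^{\star}|}\,|\pi_{U_i^{\star}}(S_i^{(k)})| \;\leq\; c^{\,|U_i^{\star}|-1}\, p\,|G_i|.
\]
Two concentration steps follow. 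Since $|G_i|^{-1}=o(p)$ forces $p|G_i|\to\infty$, Chernoff's inequality yields $|[G_i]_p|=(1\pm\varepsilon)p|G_i|$ with probability tending to $1$. Next, the concentration hypothesis (Definition~\ref{d.concentration_intersection}) applied to the subset $U_i^{\star}$ gives, with probability tending to $1$,
\[
\bigl|\pi_{U_i^{\star}}(S_i^{(k)})\cap [G_i]_p^{|U_i^{\star}|}\bigr| \;\leq\; (1+\varepsilon)\,p^{|U_i^{\star}|}|\pi_{U_i^{\star}}(S_i^{(k)})| \;\leq\; (1+\varepsilon)\,c^{\,|U_i^{\star}|-1}\,p\,|G_i|,
\]
using the key estimate in the last inequality.

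Now I build $D\subseteq[G_i]_p$ by selecting one coordinate from each tuple in $\pi_{U_i^{\star}}(S_i^{(k)})\cap[G_i]_p^{|U_i^{\star}|}$, so that $|D|$ is bounded by the right-hand side above. Every solution $x\in S_i^{(k)}\cap[G_i]_p^k$ projects to a tuple in this set, hence $D\cap\{x_1,\dots,x_k\}\neq\emptyset$, showing that $X':=[G_i]_p\setminus D$ contains no solution of $S_i^{(k)}$. Combining the two concentration bounds,
\[
\frac{|X'|}{|[G_i]_p|} \;\geq\; \frac{(1-\varepsilon)-(1+\varepsilon)\,c^{\,|U_i^{\star}|-1}}{1+\varepsilon};
\]
since $|U_i^{\star}|\geq 2$ and $c\leq 1$ yield $c^{\,|U_i^{\star}|-1}\leq c$, choosing $c=c(\delta)$ small enough (essentially $c<1-\delta$) makes this ratio exceed $\delta$ uniformly in $i$, and $X'$ is then the desired witness.

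The main obstacle is uniformity of the constant $c$ across the sequence, since $U_i^{\star}$ and $|U_i^{\star}|-1$ may vary with $i$; this is resolved by the elementary monotonicity $c^{\,|U_i^{\star}|-1}\leq c$ for $c\leq 1$, so $c$ depends only on $\delta$ (and on $k$ through the ambient concentration hypothesis). I note that, as the statement advertises, the V-property and the normalization conditions of Definition~\ref{d.normal} are never used here---only the upper bound on $p$ and the concentration hypothesis are invoked---and that plain Markov on $Y=|S_i^{(k)}\cap[G_i]_p^k|$ would be insufficient: it only gives a bounded-away-from-$1$ failure probability. Concentration of the \emph{projection} $\pi_{U_i^{\star}}$ is what makes the failure probability genuinely tend to $0$, complementing the small-$p$ regime of Proposition~\ref{t.zero_statement0} to cover the full subcritical range.
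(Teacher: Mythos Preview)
Your proof is correct and follows essentially the same alteration strategy as the paper: pick the maximizing projection $U^\star$, use the concentration hypothesis to bound the number of projected tuples landing in $[G_i]_p^{|U^\star|}$, delete one coordinate from each such tuple, and verify the surviving set has relative density at least $\delta$. The paper works with the specific choice $p''=\frac{1-\delta}{4}p'(U)$ and constant $c=\frac{1-\delta}{4}$, whereas you argue directly from $p\le c\,p_{(S_i,G_i)}$ and extract the key inequality $p^{|U^\star|}|\pi_{U^\star}(S_i^{(k)})|\le c^{|U^\star|-1}p|G_i|$; this is a slightly cleaner packaging of the same computation. One small imprecision: in the degenerate case where the inner maximum exceeds $1$, your parenthetical says the argument works ``with any $|U|\geq 2$'', but in fact you still need the max-attaining $U^\star$ (or any $U$ with $|\pi_U(S_i^{(k)})|\le|G_i|$) for the key estimate to hold---the paper treats this case with a separate short argument, but your main line already covers it once you keep $U^\star$.
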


\begin{proof}
Consider a generic system of configurations of degree $k$ in the family $(S,G)$.
Recall that $\mathbb{E}(|[G]_p|)=p|G|$.
Also, under the conditions of Definition~\ref{d.concentration_intersection}, for $U\subset [1,k]$, $|U|\geq 2$ we have that $\mathbb{E}\left(|\,[G]_p^{|U|}\cap \pi_U(S^{(k)})\,|\right)=p^{|U|} |\pi_U(S^{(k)})|$.
Consider now
\begin{equation*}
	p':=p'(U)=\left(\frac{|G|}{|\pi_U(S^{(k)})|}\right)^{\frac{1}{|U|-1}} \,\mathrm{and}\,\,\,\, p''=p'\frac{1-\delta}{4}.
\end{equation*}
Assume that $p'\leq 1$ for each choice of $U$.
Observe that
\begin{displaymath}
	p'|G|=p'^{|U|}|\pi_U(S^{(k)})|.
\end{displaymath}
This equality tells us that the expected number of elements in the random set $[G]_{p'}$ equals the expected number of solutions. Observe also that
\begin{equation*}
\left(\frac{1-\delta}{4}\right)^{|U|-1} p''|G|=(p'')^{|U|} |\pi_U(S^{(k)})|.
\end{equation*}
Let us analyze the random set $[G]_{p''}$.
With probability tending to $1$ as $|G|\to \infty$, we have that by Definition~\ref{d.concentration_intersection}
\begin{displaymath}
|\pi_U(S^{(k)})\cap [G]_{p''}^{|U|}|\leq \sqrt{2}(p'')^{|U|} |\pi_U(S^{(k)})|.
\end{displaymath}
Observe also that asymptotically almost surely
\begin{displaymath}
	|[G]_{p''}|\geq \frac{1}{\sqrt{2}} p''|G|.
\end{displaymath}
If there exist a set of relative density $\geq \delta$ in $[G]_{p''}$ with no solutions, then the set $[G]_{p''}$ will not satisfy the $(\delta,S)_k$-stable property.
On one hand,
there are, asymptotically almost surely, at most
\begin{displaymath}
	\sqrt{2}(p'')^{|U|} |\pi_U(S^{(k)})|
\end{displaymath}
configurations in $\pi_U(S^{(k)})\cap [G]_{p''}^{|U|}$.
Additionally,
\begin{displaymath}
	\sqrt{2}(p'')^{|U|} |\pi_U(S^{(k)})|=\left(\frac{1}{4}\right)^{|U|-2}\frac{\sqrt{2}}{4} (1-\delta)^{|U|-1} p'' |G|\leq \frac{1}{\sqrt{2}} \frac{1-\delta}{2} p'' |G|.
\end{displaymath}
Therefore, we can apply the Alteration Method in the following way: by avoiding a set of size $\frac{1}{\sqrt{2}} \frac{1-\delta}{2} p'' |G|$ from $[G]_{p''}$,
we can find a subset with no configurations in $\pi_U(S^{(k)})\cap [G]_{p''}^{|U|}$.
As the remaining part of $[G]_{p''}$ after removing at most $\frac{1}{\sqrt{2}} \frac{1-\delta}{2} p'' |G|$ has relative size larger than
$\delta$, we can find sets of relative density larger than $\delta$ (asymptotically almost surely).
Therefore, with probability tending to $0$, the property is fulfilled.

Since this can be done for each choice of $U$, we can take the maximum of all these probabilities $p'$ to find the maximum probability for which the $(\delta,S)_k$-stability is fulfilled with probability $0$ for some $U$ (and, hence, for the system as a whole).
Thus by picking the maximum of them, namely
\begin{equation*}
\max_{\substack{U\subseteq [1,k]\\ |U|\geq 2}} p'(U)=\max_{\substack{U\subseteq [1,k]\\ |U|\geq 2}}
	\left(\frac{|G|}{|\pi_U(S^{(k)})|}\right)^{\frac{1}{|U|-1}}
\end{equation*}
and $c=\frac{1-\delta}{4}$ the result holds.

Finally assume that $p_{(S,G)}=1$. Then, by equation \eqref{eq.0-state}, $|\pi_{V}(S^{(k)})|\leq |G|$ for some $V\subset [1,k]$, \, $|V| \geq 2$.
In such case, picking $p_{(S,G)}=1$ and $c=(1-\delta)/4$ as before also makes the result valid as, with probability tending to $1$, we will pick a set of relative density $\geq \delta$ that avoids $\pi_{V}(S^{(k)})$ completely.
Indeed, each $x\in\pi_{U'}(S^{(k)})$ considers $|V|\geq 2$ elements in $G$, each of its components.
Hence, there exist a set of size at most $|G|/2$ such that, if $[G]_{p_{(S,G)}}$ avoids it, $[G]_{p_{(S,G)}}^{|V|}\cap \pi_{V}(S^{(k)})=\emptyset$.
By the choice of $c$, this will be possible asymptotically almost surely.
\end{proof}

\subsection{Uniformity} \label{s.annoying_example}

The order of magnitude on the probability in Theorem~\ref{thm: random2} and in Theorem~\ref{t.zero_statement} do no match for a general configuration system, as the example in Appendix~\ref{s.app_uni} shows.
The following natural notion ensures an equality between the probabilities coming from Theorem~\ref{thm: random2} and Proposition~\ref{t.zero_statement}, as Proposition~\ref{p.zero_statement_uniform} shows. In particular, the example provided in Appendix~\ref{s.app_uni} does not satisfy the following definition.

\begin{definition}[$\rho-k-$uniformity]\label{d.g-uniform}
	Given $\rho>0$,
	the system $(S,G)$ is said to be \emph{$\rho-k-$uniform} (or $\rho-$uniform if $k$ is clear from the context) if, for each $U\subset [1,k]$ with $|U|\geq 2$, and for each $(x_1,\ldots,x_{|U|})\in \pi_U(S^{(k)})$, then
	$$
		\left|\pi^{-1}_U((x_1,\ldots,x_{|U|}))\cap S^{(k)}\right|\geq \rho \max_{(y_1,\ldots,y_{|U|})\in \pi_U(S^{(k)})}\left\{\left|\pi^{-1}_U((y_1,\ldots,y_{|U|}))\cap S^{(k)}\right|\right\}.
	$$
\end{definition}
In other words, the number of solutions projected to an element is, up to a constant factor, the same as its maximum number.
\begin{proposition}\label{p.zero_statement_uniform}
	Let  $\{(S_i,G_i)\}_{i\geq 1}$ be a sequence of systems of configurations.
	Write
$$
	p_{(S_i,G_i)}= 	\max_{\ell\in[2,k]} \left(\frac{\alpha^k_{\ell}}{\alpha^k_1} \right)^{\frac{1}{\ell-1}}.
$$
	Assume that $\{(S_i,G_i)\}_{i\geq 1}$
	\begin{itemize}
\item satisfies Definition~\ref{d.concentration_intersection} for $p$ with
	\begin{equation*}\label{eq.proba_4}
	\max\left\{|G_i|^{-1},|S_i^{(k)}|^{-1/k}\right\}=o(p) \text{ and }
p	\leq c p_{(S_i,G_i)}
\end{equation*}	
\item is $\rho-k-$uniform, with the same $\gamma>0$ for each of the systems.
\end{itemize}
Then there exists constants $0<c<C$, $c$ and $C$ depending on $\delta$ and on $k$, such that
\begin{equation*}
	 \lim_{i\to \infty} \mathbb{P}([G_i]_p\,\,\mathrm{is}\,\,(\delta,S_i)_k-\mathrm{stable})=\left\{ \begin{array}{c}
                                                                                                                     0,\,\,\text{if\,\,  $p \leq c p_{(S_i,G_i)}$,}  \\
                                                                                                                     1,\,\,\text{if\,\,  $p \geq C p_{(S_i,G_i)}$.}
                                                                                                                   \end{array}\right.
\end{equation*}
\end{proposition}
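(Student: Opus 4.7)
The plan is to deduce the proposition by combining the two previous results: Theorem~\ref{thm: random2} supplies the $1$-statement, and Theorem~\ref{t.zero_statement} supplies the $0$-statement. The only issue is that the threshold appearing in Theorem~\ref{t.zero_statement} is expressed in terms of the projection sizes $|\pi_U(S_i^{(k)})|$, whereas the one in the proposition (and in Theorem~\ref{thm: random2}) is expressed in terms of the restricted degrees of freedom $\alpha_\ell^k$ and $\alpha_1^k$. The role of $\rho$-$k$-uniformity will be to show that these two thresholds agree up to constants depending only on $\rho$ and $k$, so that a single $p_{(S_i,G_i)}$ governs both regimes.

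For the $1$-statement I would invoke Theorem~\ref{thm: random2} verbatim: its hypothesis is covered by the normal sequence assumption (carrying the V-property and the size conditions), its threshold has exactly the form $\max_{\ell}(\alpha_\ell^k/\alpha_1^k)^{1/(\ell-1)}$, and its conclusion already upgrades to $(\delta,S_i)_k$-stability as noted in the remark following the theorem's proof. No further work is required here, and the constant $C$ is the one produced by that theorem.

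For the $0$-statement the key calculation is a double-sided estimate for $|\pi_U(S_i^{(k)})|$. Fix $U\subset[1,k]$ with $|U|=\ell\ge 2$ and set $M_U=\max_{y\in \pi_U(S_i^{(k)})}|\pi_U^{-1}(y)\cap S_i^{(k)}|$, so that $M_U\le \alpha_\ell^k$. Writing $|S_i^{(k)}|$ as a sum over the fibres of $\pi_U$, the trivial bound gives $|\pi_U(S_i^{(k)})|\ge |S_i^{(k)}|/M_U$, while $\rho$-uniformity gives the reverse inequality $|\pi_U(S_i^{(k)})|\le |S_i^{(k)}|/(\rho M_U)$. Choosing for each $\ell$ the set $U^\ast$ of size $\ell$ that maximises $M_U$ (so that $M_{U^\ast}=\alpha_\ell^k$), this yields
\begin{equation*}
|G_i|/|\pi_{U^\ast}(S_i^{(k)})| \;\ge\; \rho\,|G_i|\,\alpha_\ell^k/|S_i^{(k)}|.
\end{equation*}
On the other hand the elementary pigeonhole bound $\alpha_1^k\ge |S_i^{(k)}|/|G_i|$ gives $\alpha_\ell^k/\alpha_1^k \le |G_i|\,\alpha_\ell^k/|S_i^{(k)}|$. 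Taking the appropriate $(\ell-1)$-th roots and the maximum over $\ell$, I obtain a constant $c_{\rho,k}>0$ with
\begin{equation*}
\max_{\ell\in[2,k]}\Bigl(\alpha_\ell^k/\alpha_1^k\Bigr)^{1/(\ell-1)} \;\le\; c_{\rho,k}^{-1}\,\max_{U:\,|U|\ge 2}\Bigl(|G_i|/|\pi_U(S_i^{(k)})|\Bigr)^{1/(|U|-1)}.
\end{equation*}
Hence the threshold of the proposition is at most a constant times the threshold of Theorem~\ref{t.zero_statement}, and the $0$-statement of Theorem~\ref{t.zero_statement} applies with a suitably shrunken $c$.

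The main obstacle is precisely this matching step: without $\rho$-uniformity the projections $\pi_U(S^{(k)})$ could be much smaller than $|S^{(k)}|/\alpha_{|U|}^k$, creating a gap between the $0$- and $1$-statements (this is the behaviour shown by the example in Appendix~\ref{s.app_uni}). Once $\rho$-uniformity closes this gap, the two existing theorems slot together and the proof is a bookkeeping exercise in tracking how the constants $c$ and $C$ depend on $\delta$, $k$ and $\rho$; the concentration hypothesis of Definition~\ref{d.concentration_intersection} is needed only to invoke Theorem~\ref{t.zero_statement}, while the V-property carried by the normal sequence is needed only to invoke Theorem~\ref{thm: random2}.
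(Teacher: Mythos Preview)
Your proposal is correct and follows essentially the same approach as the paper: both identify that $\rho$-uniformity is the device that converts the projection threshold of Theorem~\ref{t.zero_statement} into the $\alpha_\ell^k/\alpha_1^k$ threshold of the proposition, after which Theorem~\ref{thm: random2} and Theorem~\ref{t.zero_statement} handle the two regimes. Your fibre-counting derivation of $|\pi_U(S^{(k)})|\le |S^{(k)}|/(\rho M_U)$ together with the pigeonhole bound $\alpha_1^k\ge |S^{(k)}|/|G|$ is exactly the content of the paper's two displayed inequalities for $\alpha_\ell^k$.

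One small difference worth noting: the paper first assumes $\pi_{\{i\}}(S^{(k)})=G$ to obtain the \emph{equality} $\alpha_1^k=|S^{(k)}|/|G|$, and then treats the case $\pi_{\{i\}}(S^{(k)})\subsetneq G$ separately (splitting further according to whether $|\pi_{\{i\}}(S^{(k)})|\ge |G|/2$). Your pigeonhole inequality $\alpha_1^k\ge |S^{(k)}|/|G|$ holds unconditionally and gives the needed direction of comparison, so you avoid that case split entirely; this is a minor streamlining of the paper's argument.
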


\begin{proof}
	Pick a generic system $(S,G)$.
Assume $\pi_{\{i\}}(S^{(k)})=G$ for all $i\in [1,k]$. Then
\begin{equation}\label{eq.adjust}
\frac{|S^{(k)}|}{|G|}=\alpha_1^k.
\end{equation}

In any system we have, by the definition of $\alpha_\ell^k$,
\begin{displaymath}
	\alpha_\ell^k\geq \max_{\substack{U\subseteq [1,k]\\ |U|=\ell}}\frac{\left|S^{(k)}\right|}{\left|\pi_U(S^{(k)})\right|}.
\end{displaymath}
If the system is $\rho-$uniform then by Definition~\ref{d.g-uniform}
\begin{displaymath}
	\gamma\alpha_{\ell}^k\leq \max_{\substack{U\subseteq [1,k]\\ |U|=\ell}}\frac{\left|S^{(k)}\right|}{\left|\pi_U(S^{(k)})\right|}.
\end{displaymath}

Now we put everything together and substitute these expressions in Theorem~\ref{t.zero_statement}. The result follows as the constant depend on $\rho$ and the precise value of the index $\ell$ that gives the maximum in $p_{(S,G)}$, hence the absolute $C$ can be obtained as a function of $k$ and $\rho$.

If $\pi_{\{i\}}(S^{(k)})\subsetneq G$, then if $|\pi_{\{i\}}(S^{(k)})|\geq |G|/2$, the proof goes through by adjusting the $C$ by a constant in \eqref{eq.adjust}. Otherwise, the 0-statement is also fulfilled by the argument at the end of Theorem~\ref{t.zero_statement} and by observing that $p_{(S_i,G_i)}\leq 1$ by the definitions of $\alpha_\ell^k$. The other cases for $p$ are covered by Proposition~\ref{t.zero_statement0}.
\end{proof}

As we shall see, the systems of configuration discussed in Section~\ref{s.families_examples} satisfy Definition~\ref{d.concentration_intersection} and Definition~\ref{d.g-uniform}.
However, our results do not clarify if the system has a gap between the $0$-statement and the $1$-statement (aside from the system is $\rho-k-$uniform).
For a further discussion on how some conditions might be relaxed, the reader is referred to the approach taken by \cite{Sch12} and the discussion in \cite[page~692]{BMS14} to adapt the approach taken here to such case.


\section{Families of examples}\label{s.families_examples}

We present in this part several applications of our framework.
In order to apply the results obtained in the previous sections, we shall see that our system of configurations satisfies the V-property (Definition~\ref{d.varna_prop}), the concentration property (Definition~\ref{d.concentration_intersection}), and the uniformity property (Definition~\ref{d.g-uniform}).

We start this section presenting configurations arising from homomorphisms between abelian groups.
In particular, we present some new examples in Subsection \ref{ss.example}.
In Subsection \ref{s.cube} we particularize the previous homomorphism setting to cubes.
Finally we discuss the prominent case of system of equations over abelian groups and equations over non-abelian groups (Subsection \ref{subsec:sys_lin_eq} and Subsection \ref{ss.nonabelian} respectively).
%
%
\subsection{Homomorphisms of finite abelian groups}\label{s.hom}
In this section we study sets free of \emph{configurations arising from homomorphisms} between finite abelian groups. That is it to say, $G$ is a finite abelian group, $M$ is a group homomorphisms $M:G^k\to G^m$ and the solution set is $S=M^{-1}(0)$ or, in general, $M^{-1}(g)$, with $g\in G^m$.
Then the system $(S,G)$ is a system of configurations of degree $k$.
A group homomorphisms $M$ is said to be \emph{invariant} if, for every $x\in G$, then $(x,\ldots,x)\in M^{-1}(0)$.
In all this section we restrict ourselves to system of configurations arising from invariant homomorphisms as they are a quite general class of systems of configurations that satisfy the V-property as Lemma~\ref{l.V-property_invariant_hom} shows.
\subsubsection{Shape of invariant systems of homomorphisms}\label{ss.shape}
As a matter of illustration of the configurations that can be encoded by the group homomorphism setting, we show in this section the canonical form of invariant homomorphisms in the integers. This will be later exploited in Subsection~\ref{s.cube} to deal with configuration in cubes.

For a given homomorphism $M:[\Z^m]^{k_1}\to[\Z^m]^{k_2}$, we can consider it as a
linear system defined by $m k_1 \times m k_2$ integer values (see \cite{Vena14}).
This allows us to define the volume of $M$ as done in~\cite{RuZU14}. Let $e_i$ denote the vector with $1$ in the $i$-th coordinate and $0$ in the rest.
Since $(e_i,\ldots,e_i)\in S\cap [1,i]^m$ for each $i\in[1,m]$, then we observe that, in each of the $mk_2$ equations, the variables corresponding with some $i$-th coordinate should sum to $0$ for each coordinate and each equation.

\begin{proposition}\label{p.solution_structure} The solutions of each invariant system $M:[\Z^m]^{k}\to [\Z^m]^{k}$ (namely, elements $(x_1,\ldots,x_k)\in S=M^{-1}(0)$) can be codified by equation of the form
\begin{displaymath}
	\left(\begin{array}{c}
		x_1 \\
		x_2 \\
		\vdots \\
		x_k
	\end{array}\right)=
	\left(\begin{array}{c}
		x_0 \\
		x_0 \\
		\vdots \\
		x_0
	\end{array}\right)+
	\lambda_1
	\left(\begin{array}{c}
		F_{1,1} \\
		F_{1,2} \\
		\vdots \\
		F_{1,k}
	\end{array}\right)+\cdots+
	\lambda_q
	\left(\begin{array}{c}
		F_{q,1} \\
		F_{q,2} \\
		\vdots \\
		F_{q,k}
	\end{array}\right)
\end{displaymath}
for fixed $q$, $F_{i,j}\in \Z^m$ (depending on $M$) and some $\lambda_i\in \Z$ and $x_0\in \Z^m$.
\end{proposition}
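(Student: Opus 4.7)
The plan is to exploit the fact that $S = M^{-1}(0) \subseteq (\Z^m)^k$ is a subgroup of a free abelian group of finite rank, hence itself finitely generated (and free), and then carve out the diagonal piece.

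First I would observe that $M:(\Z^m)^k \to (\Z^m)^k$ is a homomorphism of finitely generated free $\Z$-modules, so $S = \ker M$ is a subgroup of the free abelian group $(\Z^m)^k \cong \Z^{mk}$. By the structure theorem for subgroups of finitely generated free abelian groups, $S$ is itself free of finite rank, hence in particular finitely generated as a $\Z$-module.

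Next I would use invariance to isolate the diagonal. The hypothesis $(x,\ldots,x)\in S$ for every $x\in \Z^m$ means that the diagonal subgroup
\[
\Delta = \{(x_0,x_0,\ldots,x_0) : x_0 \in \Z^m\}
\]
is contained in $S$. This is the submodule of $S$ generated by the $m$ vectors $(e_\ell,e_\ell,\ldots,e_\ell)$ for $\ell=1,\ldots,m$, where $e_\ell$ is the $\ell$-th standard basis vector of $\Z^m$. The quotient $S/\Delta$ is then a finitely generated abelian group (not necessarily free, but that will not matter for an existence statement), so I can pick representatives $F_1,\ldots,F_q \in S$ whose images $\bar F_1,\ldots,\bar F_q$ generate $S/\Delta$.

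Finally, given any solution $(x_1,\ldots,x_k)\in S$, its class in $S/\Delta$ can be written as $\sum_{i=1}^q \lambda_i \bar F_i$ for some integers $\lambda_i\in\Z$. Lifting back, the element $(x_1,\ldots,x_k) - \sum_{i=1}^q \lambda_i F_i$ lies in $\Delta$, so there exists $x_0\in \Z^m$ with
\[
(x_1,\ldots,x_k) = (x_0,\ldots,x_0) + \sum_{i=1}^q \lambda_i F_i,
\]
which is precisely the claimed form, with the vectors $F_i = (F_{i,1},\ldots,F_{i,k})$ depending only on $M$ (via the choice of generators of $S/\Delta$) and the parameters $\lambda_i\in\Z$, $x_0\in\Z^m$ depending on the particular solution.

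There is no real obstacle here beyond correctly invoking the finite generation of subgroups of free abelian groups; the only mild subtlety is that $S/\Delta$ may have torsion, but since the statement only asserts existence of a decomposition (not uniqueness), this causes no issue. One could optionally refine the choice of $F_i$'s using the Smith normal form applied to a presentation of $S$ containing $\Delta$ as a distinguished subgroup, which would give a canonical structure, but for the stated proposition the generator-lifting argument above suffices.
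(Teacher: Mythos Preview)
Your proof is correct and follows essentially the same approach as the paper: both arguments quotient the solution set $S$ by the diagonal subgroup $\Delta=\{(x_0,\ldots,x_0)\}$ and then exploit finite generation of the quotient to produce the vectors $F_i$. The paper's version is phrased as an iterative construction (peel off one $F_i$ at a time by taking a minimal-$\ell_1$ representative of a nonzero class, then pass to the further quotient, and repeat), whereas you invoke the structure theorem once to grab all generators of $S/\Delta$ simultaneously; these are the same argument in different dress. As a minor remark, your caveat about possible torsion in $S/\Delta$ is unnecessary: $\Delta$ is a pure subgroup of $(\Z^m)^k$ (if $n(y_1,\ldots,y_k)\in\Delta$ then $ny_1=\cdots=ny_k$, hence $y_1=\cdots=y_k$), so it is pure in $S$ as well and $S/\Delta$ is in fact free.
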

\begin{proof}
	One direction is clear (the right to left), as those configurations are linear and invariant. On the other direction, we proceed as in the case of linear systems of equations to find a basis. Consider the solution set and relate two solutions if their difference is an element as
$(x_0,\ldots,x_0)$. There is an extra solution (more than just the class of the zero). Take the representative with minimal $l_1$ norm and name it $F_1$. Now consider the modulus of the solution set by the possible sum of
$(x_0,\ldots,x_0)+\lambda_1 F_1$ for each $\lambda_1\in \Z$. Do the same as there are more than just one class (the class of zero). The process should end as the maximum number of degrees of freedom is $mk_1$ (both these numbers depends on $M$).
\end{proof}
Let us show an illustrative example: with the invariant homomorphisms $A:[\Z^2]^3\to[\Z^2]^2$ given by 
\begin{equation*}\label{eq.examp_hom_sys}
	A=\left(
	\begin{array}{ccc}
		\begin{pmatrix}
			1 & 0 \\
			0 & 1
\end{pmatrix} &
\begin{pmatrix}
	0&0 \\
	0 & -1 \\
\end{pmatrix}
& \begin{pmatrix}
 1& 0 \\
0 &0 \\
\end{pmatrix} \\
\begin{pmatrix}
	-1 & 1 \\
	0 & 0 \\
\end{pmatrix}&
\begin{pmatrix}
	1 & 0 \\
	0 & 0 \\
\end{pmatrix}&
\begin{pmatrix}
	0 & -1 \\
	0 & 0 \\
\end{pmatrix}
	\end{array}
	\right),
\end{equation*}
we can codify the $2$-dimensional simplices in $\Z^2$: sets of $3$ points of the type $((x,y), (x+a,y),(x,y+a))$ for $x,y,a\in \Z$.
If we are interested in configurations with $a\geq 0$, we can consider a configuration symmetric with respect to the $(x,y)$ such as: $((x,y), (x+a,y),(x,y+a),(x-a,y),(x,y-a))$.
These configuration system cannot be codified using an integer matrix with three columns (one per each point).

\subsubsection{Results for invariant homomorphism systems}

We start proving the V-property for invariant homomorphism systems, Lemma~\ref{l.V-property_invariant_hom}, using the following arithmetic removal lemma from \cite{Vena15}:
\begin{theorem}[Removal lemma for homomorphisms, Theorem~2 in \cite{Vena15}] \label{t.rem_lem_ab_gr}
	 Let $G$ be a finite abelian group and let $k$ be a positive integer. Let $M:G^k\to G^k$ be a group homomorphism.
	Let $b\in G^k$. Let $X_i\subset G$ for $i=[1,k]$, and $X=X_1\times \cdots \times X_m$ and let $S=M^{-1}(b)$.
	
	For every $\varepsilon>0$ there exists a $\delta=\delta(\varepsilon,k)>0$ such that, if
 $$\left|S\cap X\right|<\delta \left|S\right|,$$ then there are sets $X_i'\subset \left[X_i\cap \pi_{\{i\}}(S)\right]$ with $|X_i'|<\varepsilon |\pi_{\{i\}}(S)|$ and $$S\cap \left( X\setminus X'\right)=\emptyset, \text{ where } X\setminus X'=(X_1\setminus X'_1)\times \cdots \times (X_m\setminus X'_m).$$
\end{theorem}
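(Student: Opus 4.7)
The plan is to reduce the statement to a hypergraph removal lemma, following the paradigm that has proved successful for earlier arithmetic removal lemmas (Ruzsa--Szemer\'edi, Green, Kr\'al--Serra--Vena, Shapira). If $S = M^{-1}(b) = \emptyset$ the conclusion is vacuous, so fix $\mathbf{x}_0 \in S$ and write $S = \mathbf{x}_0 + \ker M$. Since $\ker M$ is a finite abelian subgroup of $G^k$, the structure theorem for finite abelian groups produces a finite abelian group $A$ together with a parameterization $\phi : A \to \ker M$; each coordinate map $a \mapsto x_i(a) := (\mathbf{x}_0)_i + \phi(a)_i$ is then a shifted homomorphism $A \to G$, and the natural bijection $A \leftrightarrow S$ identifies $S \cap X$ with $\{a \in A : x_i(a) \in X_i \text{ for all } i\}$.

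I would next build a $k$-partite $(k-1)$-uniform hypergraph $H$ whose simplices are in bijection with $S \cap X$. For each $i \in [1,k]$, decompose $A$ as a direct sum $A_i \oplus B_i$ chosen so that $x_i$ depends only on the $B_i$-component of the parameter; let the vertex part $V_i$ record the $A_i$-component, enriched with enough auxiliary data that knowing one vertex from each $V_j$ with $j \ne i$ uniquely determines the value $x_i(a)$. Include a hyperedge on the $V_j$'s, $j\ne i$, exactly when the forced value $x_i(a)$ lies in $X_i$. By construction a complete simplex on one vertex from each of $V_1, \ldots, V_k$ corresponds precisely to a parameter $a$ with $\mathbf{x}(a) \in X$, hence to an element of $S \cap X$. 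The hypothesis $|S \cap X| < \delta |S|$ now says that $H$ has few simplices, so by the hypergraph removal lemma of Gowers (or Nagle--R\"odl--Schacht--Skokan) one can delete an $\varepsilon'$-fraction of the hyperedges to destroy every simplex, with $\varepsilon' \to 0$ as $\delta \to 0$. A deleted hyperedge missing part $i$ records a value $x_i \in X_i \cap \pi_{\{i\}}(S)$; taking $X_i'$ to be the collection of these values yields $|X_i'| < \varepsilon |\pi_{\{i\}}(S)|$ for a suitable $\varepsilon = \varepsilon(\varepsilon',k)$ and $S \cap (X \setminus X') = \emptyset$, which is the desired conclusion.

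The main obstacle is the second step: arranging the parameterization so that each $x_i$ is genuinely a function of a well-defined complementary block of the free parameters, so that solutions correspond to simplices of a bona fide $k$-partite $(k-1)$-uniform hypergraph, and so that ``missing part $i$'' of a simplex translates cleanly into the coordinate $x_i \in \pi_{\{i\}}(S)$. For integer linear systems this is handled by Smith Normal Form (cf.\ Proposition~\ref{p.solution_structure}); for a general finite abelian $G$ the decomposition of $\ker M$ must be chosen with care, possibly after passing to a cover of $G^k$, and this is the technical core of Vena's argument. Once the encoding is in place, the application of hypergraph removal and the bookkeeping that converts the edge-removal fraction into the proportion $|X_i'|/|\pi_{\{i\}}(S)|$ are routine.
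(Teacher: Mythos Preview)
The paper does not prove this statement: Theorem~\ref{t.rem_lem_ab_gr} is quoted verbatim from \cite{Vena15} (it is labeled ``Theorem~2 in \cite{Vena15}'') and is used as a black box to derive Lemma~\ref{l.V-property_invariant_hom}. There is therefore no proof in the present paper to compare your proposal against.

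That said, your outline follows the standard route by which such arithmetic removal lemmas are established, and you correctly identify the genuine difficulty: engineering the $k$-partite $(k-1)$-uniform encoding so that the edges missing part $i$ depend only on the $i$-th coordinate value in $\pi_{\{i\}}(S)$, uniformly across $i$. One point you pass over too quickly is the final ``routine bookkeeping.'' Taking $X_i'$ to be \emph{all} values $x_i$ recorded by some deleted hyperedge in direction $i$ does not by itself give $|X_i'| < \varepsilon\,|\pi_{\{i\}}(S)|$: hypergraph removal controls the number of deleted edges relative to $|V|^{k-1}$, not relative to $|\pi_{\{i\}}(S)|$, and a single value $x_i$ may correspond to many edges. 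What makes the argument go through is a homogeneity feature of the construction---every $x_i \in \pi_{\{i\}}(S)$ is represented by the \emph{same} number of edges in direction $i$---together with an averaging step, so that the edge-removal bound translates into the coordinate bound. This is not hard once the encoding is set up correctly, but it is part of the construction rather than an afterthought.
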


As a consequence of this result we have the following corollary:

\begin{lemma}[V-property for invariant homomorphism systems]\label{l.V-property_invariant_hom}
	Let $(S,G)$ be a system of configurations arising from an invariant homomorphism $M$ of degree $k$.
Then, for every $\varepsilon>0$ there exist $\gamma=\gamma(\varepsilon,k)>0$ such that, for any set $A\subset G$
	with $|A|>\varepsilon |G|$, then $|A^k \cap S|\geq \gamma |S|$.
\end{lemma}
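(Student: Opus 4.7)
The plan is to deduce the V-property as a direct contrapositive consequence of the removal lemma (Theorem~\ref{t.rem_lem_ab_gr}), using invariance to force the diagonal into any solution set and thereby control $A$ by the ``exceptional'' sets produced by the removal lemma.

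First, I would fix $\varepsilon>0$ and let $\varepsilon' = \varepsilon/(2k)$. Applying Theorem~\ref{t.rem_lem_ab_gr} with this $\varepsilon'$ and the given $k$ produces some $\delta' = \delta(\varepsilon',k)>0$. My candidate for $\gamma$ is precisely this $\delta'$, so I would argue by contradiction: assume there exists $A \subset G$ with $|A|>\varepsilon|G|$ such that $|A^k \cap S| < \delta' |S|$. I would then apply Theorem~\ref{t.rem_lem_ab_gr} with $b=0$ and $X_1 = \cdots = X_k = A$, so that $X = A^k$ and the hypothesis $|S\cap X| < \delta'|S|$ is met.

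The conclusion of the removal lemma yields sets $X_i' \subset A \cap \pi_{\{i\}}(S)$ with $|X_i'|<\varepsilon'|\pi_{\{i\}}(S)|$ such that $S \cap (A \setminus X_1') \times \cdots \times (A \setminus X_k') = \emptyset$. Now the key observation uses invariance of $M$: for every $x \in G$, the diagonal element $(x, \ldots, x)$ lies in $S$, so $\pi_{\{i\}}(S) = G$ for each $i$, and in particular $|X_i'| < \varepsilon'|G|$. Moreover, for each $x \in A$ the diagonal tuple $(x,\ldots,x)$ is a solution in $A^k \cap S$, so it cannot lie in $(A\setminus X_1')\times \cdots \times (A\setminus X_k')$; equivalently, $x \in X_i'$ for at least one $i$. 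Hence $A \subseteq \bigcup_{i=1}^k X_i'$.

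Combining these two facts gives $|A| \leq \sum_{i=1}^k |X_i'| < k\varepsilon'|G| = \tfrac{1}{2}\varepsilon|G|$, contradicting $|A|>\varepsilon|G|$. Therefore $|A^k \cap S| \geq \delta'|S|$ and we may set $\gamma = \delta(\varepsilon/(2k),k)$, which depends only on $\varepsilon$ and $k$, as required. I do not expect any real obstacle here beyond being careful with the direction of the removal lemma and the role of invariance in pinning down $\pi_{\{i\}}(S)=G$; the whole argument is a standard ``remove-the-diagonal'' reduction.
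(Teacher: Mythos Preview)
Your proof is correct and follows essentially the same strategy as the paper: apply the removal lemma (Theorem~\ref{t.rem_lem_ab_gr}) in contrapositive form with $X_1=\cdots=X_k=A$, then use invariance to observe that every diagonal $(x,\ldots,x)$ with $x\in A$ is a solution, forcing $A\subseteq\bigcup_i X_i'$ and hence a contradiction on the size of $A$. The only cosmetic difference is your choice of $\varepsilon'=\varepsilon/(2k)$ versus the paper's $\varepsilon/k$; both yield the desired contradiction.
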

\begin{proof}
We use Theorem~\ref{t.rem_lem_ab_gr}
and proceed by contradiction.
To destroy all the configurations we should remove, at least $\varepsilon/k$ elements in each of the copies of $A$ that form the cartesian product $S^k$.
This is true because, for each $s\in A$, $(s,\ldots,s)\in S$.
Therefore, there are more than $\delta_{\text{\ref{t.rem_lem_ab_gr}}}(\varepsilon/k,k)|S|$ solutions with all its elements in $S^m$, hence proving the result with $\gamma(\varepsilon,k)=\delta_{\text{\ref{t.rem_lem_ab_gr}}}(\varepsilon/k,k)$.
\end{proof}

\begin{corollary}\label{o.counting_sol_free_hom}
	Let $k$ a fixed integer and a $\delta>0$. Then for any normal sequence $\{(S_i,G_i)\}_{i\geq 1}$ of configuration systems (coming from invariant homomorphisms), there exist $n_0$ depending on $\delta$ such that if  $|G_i|>n_0$
then for each $t$ such that
$$
	t\geq C \frac{|G_i|}{\delta}  \max_{\ell\in[2,k]}\left\{ \left(\frac{ \alpha^k_{\ell}}{\alpha^k_{1}} \frac{1}{k}{k\choose \ell} \right)^{\frac{1}{\ell-1}} \right\} \text{ and } t\leq \frac{\delta |G_i|}{ 2}
$$
there are
\begin{displaymath}
t\left[\frac{2e}{\delta^2}\right]^{\delta t}{\delta |G_i|\choose t}
\end{displaymath}
sets of size $t$ with no solution in $S_i^{(k)}$.
\end{corollary}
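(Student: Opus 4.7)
The plan is to verify the hypotheses of Theorem~\ref{t.count_conf_1} for each member of the sequence, and then conclude by a direct application. Since the output bound in the corollary is exactly the one in that theorem, the content of the proof is really to absorb the dependence on $i$ into the hypothesis $|G_i| > n_0$.

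\textbf{V-property with uniform $\gamma$.} Condition \ref{cond.3} in the definition of a normal sequence already supplies a single function $\gamma = \gamma_{\text{\ref{d.varna_prop}}}(\delta)$ that works for every $(S_i,G_i)$; alternatively, since the systems arise from invariant homomorphisms, Lemma~\ref{l.V-property_invariant_hom} produces such a $\gamma$ depending only on $\varepsilon$ and $k$. Either way, the first ingredient required by Theorem~\ref{t.count_conf_1} is available uniformly in $i$.

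\textbf{Every set of size $\delta n_i/2$ contains a configuration in $S_i^{(k)}$.} This is the only condition of Theorem~\ref{t.count_conf_1} that is not automatic from a normal sequence, and it is where $n_0$ enters. Fix $X \subseteq G_i$ with $|X| \geq \delta n_i/2$. The V-property gives
\[
|X^k \cap S_i| \;\geq\; \gamma(\delta/2, k)\,|S_i|.
\]
By condition \ref{cond.4}, $|S_i \setminus S_i^{(k)}| = o(|S_i|)$ as $i \to \infty$, so there exists $n_0 = n_0(\delta, k, \gamma)$ such that whenever $|G_i| > n_0$,
\[
\gamma(\delta/2, k)\,|S_i| \;>\; |S_i| - |S_i^{(k)}|.
\]
Then $X^k \cap S_i^{(k)} \neq \emptyset$, which is the hypothesis required.

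\textbf{Conclusion and main obstacle.} With both hypotheses verified, Theorem~\ref{t.count_conf_1} immediately yields the claimed bound $t\,[2e/\delta^2]^{\delta t}\binom{\delta|G_i|}{t}$ in the stated range of $t$, with $C$ the constant supplied there. The only subtlety is that $C = C_{\text{\ref{thm:cont1}}}(k, \xi/|S_i^{(k)}|, \alpha_1^k(k-1)!|G_i|/|S_i^{(k)}|)$ depends on $i$ through its second and third arguments, so to state the corollary with a fixed constant one must check that $C$ stabilises for large $i$. This is the main obstacle, and it is handled as follows: by V-property and condition \ref{cond.4}, the ratio $\xi/|S_i^{(k)}|$ is eventually bounded below by a positive constant depending only on $\delta, k, \gamma$; the third argument is always at least $k$ since $\alpha_1^k \geq k|S_i^{(k)}|/|G_i|$ by a double-counting argument on $S_i^{(k)}$, and can be absorbed into the constant using the monotonicity of $C_{\text{\ref{thm:cont1}}}$ (increasing in the third argument, decreasing in the second) stated at the end of the proof of Theorem~\ref{t.count_conf_1}. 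Enlarging $n_0$ if necessary completes the argument.
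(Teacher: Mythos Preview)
Your approach is the same as the paper's: verify the hypotheses of Theorem~\ref{t.count_conf_1} using normality and the V-property, then invoke it. Your argument that every $X$ with $|X|\ge \delta n_i/2$ meets $S_i^{(k)}$ is in fact more explicit than what the paper writes.

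There is, however, a genuine slip in the last paragraph. To obtain a constant $C$ that is uniform in $i$ you correctly argue that $\xi/|S_i^{(k)}|$ is eventually bounded \emph{below}, which (since $C_{\text{\ref{thm:cont1}}}$ is decreasing in its second argument) controls $C$ from above. But for the third argument $c=\alpha_1^k(k-1)!\,|G_i|/|S_i^{(k)}|$ you prove a \emph{lower} bound $c\ge (k-1)!$ via double counting. Since $C_{\text{\ref{thm:cont1}}}$ is \emph{increasing} in $c$, a lower bound on $c$ does not bound $C$; you need $c$ bounded \emph{above} uniformly in $i$. This is where the extra structure of invariant homomorphisms is used: translation by $(g,\ldots,g)$ partitions $S_i$ into $|G_i|$ classes of equal size, so $\alpha_1 = |S_i|/|G_i|$ exactly, whence
\[
c \;=\; \alpha_1^k\,\frac{(k-1)!\,|G_i|}{|S_i^{(k)}|}\;\le\; \alpha_1\,\frac{(k-1)!\,|G_i|}{|S_i^{(k)}|}\;=\;(k-1)!\,\frac{|S_i|}{|S_i^{(k)}|}\;\longrightarrow\;(k-1)!
\]
by condition~\ref{cond.4}. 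With this upper bound on $c$ (and your lower bound on $\xi/|S_i^{(k)}|$), the monotonicity of $C_{\text{\ref{thm:cont1}}}$ now gives a uniform $C$ for all $i$ with $|G_i|>n_0$, completing the argument.
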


\begin{proof}
We use Theorem~\ref{t.count_conf_1}, so all the conditions should be verified. Observe that, by Lemma~\ref{l.V-property_invariant_hom} the system satisfies the V-property with a uniform function depending only on $k$, $\gamma=\gamma_{\text{ \ref{d.varna_prop}}}(\delta,k)$.
The condition $\xi=(\gamma-1)|S|+|S^{(k)}|>0$ and the fact that any set of size $\delta |G|/2$ has a solution with all the variables being different is satisfied for an $n_0$ sufficiently large depending on $k$ and $\delta$ (because of the $\gamma$), and on how fast \ref{cond.4} approaches $1$.
$C$ is a constant depending on $k$ and on $n_0$. Indeed, normality
implies that $C$ depends on how fast the ratio between the different solutions found in any set of relative size $\delta$ and the whole solution set approaches $1$.
\end{proof}

Corollary~\ref{o.counting_sol_free_hom}, as application of Theorem~\ref{t.count_conf_1}, is rather general and technical but its main purpose is to make the applications easy to show. Now we show the concentration property for homomorphism systems.
\begin{proposition}[Concentration for homomorphism systems] \label{p.hom_concentration}
	Let $\{S_i,G_i\}_{i\geq 1}$ be an normal family of systems arising from invariant homomorphisms.
	Let $p=p(i)\in[0,1]$ be such that
\begin{enumerate}[label=(\roman*)]
	\item $\lim_{\substack{i\to \infty}} p(i) |G_i|=\infty$.
	\item\label{cond.inside_prop_2} $\lim_{\substack{i\to \infty}} p(i) |\pi_U(S_i)|^{1/|U|}=\infty$ for each $U\subset [1,k]$, $|U|\geq 2$.
\end{enumerate}
Then the sequence satisfies Definition~\ref{d.concentration_intersection} for such $p$.
\end{proposition}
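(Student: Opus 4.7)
The plan is to apply the second moment method. Fix $U\subseteq [1,k]$ with $|U|\ge 2$ and define
\[
X_U\;:=\;\bigl|\pi_U(S_i^{(k)})\cap[G_i]_p^{|U|}\bigr|\;=\;\sum_{x\in\pi_U(S_i^{(k)})}Y_x,\qquad Y_x\;:=\;\prod_{j\in U}\mathbf{1}_{x_j\in[G_i]_p}.
\]
Since every tuple in $\pi_U(S_i^{(k)})$ has pairwise distinct entries (inherited from $S_i^{(k)}$), $\mathbb{E}[Y_x]=p^{|U|}$, so $\mathbb{E}[X_U]=p^{|U|}|\pi_U(S_i^{(k)})|$ matches the target value in Definition~\ref{d.concentration_intersection}. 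By Chebyshev's inequality, the statement reduces to showing that $\mathrm{Var}(X_U)/\mathbb{E}[X_U]^2\to 0$ as $i\to\infty$, for each fixed $U$.

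I would decompose $\mathrm{Var}(X_U)$ according to the number $s\in\{1,\dots,|U|\}$ of coordinate values shared by a pair $(x,y)\in \pi_U(S_i^{(k)})^2$: pairs with $s=0$ are independent, while pairs sharing $s$ values contribute covariance at most $p^{2|U|-s}$. Letting $N_s$ count such pairs, I would bound it by choosing a set of positions $V\subseteq U$ with $|V|=s$, selecting a shared value $g\in\pi_V(S_i^{(k)})$, and then extending $g$ twice to an element of $\pi_U(S_i^{(k)})$; the definition of the restricted degree of freedom gives at most $\alpha_s^k$ extensions each time, so
\[
\mathrm{Var}(X_U)\;\le\;\sum_{s=1}^{|U|}\binom{|U|}{s}\,|\pi_V(S_i^{(k)})|\,(\alpha_s^k)^2\,p^{2|U|-s}.
\]

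The homomorphism structure enters to simplify each term. Since $S_i$ is a coset of the subgroup $\ker M$ in $G_i^k$, every projection $\pi_V$ restricted to $S_i$ has constant fiber size, yielding the exact identity $|\pi_V(S_i)|\cdot\alpha_V(S_i)=|S_i|$. Normality condition \ref{cond.4} then allows us to transfer to $|\pi_V(S_i^{(k)})|\cdot\alpha_s^k=(1+o(1))|S_i^{(k)}|$ and $|\pi_U(S_i^{(k)})|=(1+o(1))|S_i^{(k)}|/\alpha_{|U|}(S_i^{(k)})$. Dividing the variance bound by $\mathbb{E}[X_U]^2=p^{2|U|}|\pi_U(S_i^{(k)})|^2$, each summand collapses, up to constants depending only on $k$, to an expression of the form $\bigl(p^{|W|}|\pi_W(S_i^{(k)})|\bigr)^{-1}$ with $|W|\ge 2$; the $s=1$ case is controlled by hypothesis (i), since invariance of $M$ forces $\pi_{\{j\}}(S_i)=G_i$. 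Hypotheses (i) and (ii) then drive all such terms to zero.

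The main obstacle is the final algebraic step: verifying that, after applying the fiber identities from the homomorphism and the asymptotic normalization of $S_i^{(k)}$ to $S_i$, every variance term rescales into a quantity of exactly the form controlled by hypotheses (i)--(ii). The coset structure is crucial here, since without equidistribution of fibers, $N_s$ could be substantially larger than the bound used above, and the Chebyshev argument would not close.
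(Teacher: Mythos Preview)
Your approach is essentially that of the paper: second moment/Chebyshev with the variance decomposed by overlap size $s$, each term controlled via the constant-fiber identity $|\pi_V(S_i)|\cdot(\text{fiber size})=|S_i|$ coming from the coset structure of $\ker M$, and the resulting conditions $p^{s}|\pi_V(S_i)|\to\infty$ read off from hypotheses (i)--(ii). One remark: the paper's written proof only carries this out for $U=[1,k]$, so your version, which fixes a general $U$ from the start, is actually closer to what Definition~\ref{d.concentration_intersection} literally demands; note also that the extension count from $g\in\pi_V(S_i^{(k)})$ to $\pi_U(S_i^{(k)})$ is more naturally the $V$-specific fiber $|\pi_U(S_i)|/|\pi_V(S_i)|$ rather than the global $\alpha_s^k$, and using the former makes your claimed reduction of each summand to $(p^{|V|}|\pi_V(S_i)|)^{-1}$ exact rather than merely heuristic.
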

Let us remark that condition \ref{cond.inside_prop_2} implies $\lim_{\substack{i\to \infty}} p(i) |S_i^{(k)}|^{1/k}=\infty$.
%
%
%
\begin{proof}
Our strategy is to use the Second Moment Method in the form of \cite[Corollary~4.3.4]{AlSp92}.
In most of the proof, we use $G$ instead of $G_i$ to simplify the notation and the little-$o$ that appear are asymptotic considerations when $i\to\infty$.
Let $X$ denote the random variable that counts the number of solutions in $S^{(k)}\cap [G]_p^k$.
We write $X$ as $X=X_1+\cdots+X_r$, $r=|S^{(k)}|$, where $X_i$ is an indicator random variable that gets the value $1$ if the $i$-th solution in $S^{(k)}$ belongs to $[G]_p^k$, and $0$ otherwise.
Denote by $A_i$ the event associated to $X_i$.
We also write $i \sim j$ if $A_i$ and $A_j$ are not independent.
As it is proven in \cite[Corollary~4.3.4]{AlSp92}, it is enough to show that $\Delta=\sum_{i\sim j} \mathbb{P}(A_i \cap A_j)=o(\mathbb{E}(X^2))$ (for a certain range of $p$) in order to conclude that $X$ is concentrated around its average value.

We compute $\Delta$ grouping the pairs of events $(A_i,A_j)$ according to the size of the non-empty intersection.
Write $\Delta=\sum_{\ell=1}^k \Delta_\ell$, where $\Delta_i$ contains the summands $i\sim j$ in $\Delta$ such that $|A_i\cap A_j|=\ell$.
Observe that this non-empty intersection can occur in several ways ($A_i$ is a set, while $X_j$ is an ordered tuple), hence a constant correcting factor (bounded above and below by functions of $k$) has to be considered when computing $\Delta$.

We start bounding $\Delta_1$.
Observe that $\Delta_1\leq kp^{2k-1}|S^{(k)}|\frac{|S^{(k)}|}{|G|}$ up to a constant depending on $k$.
Indeed, in the bound for $\Delta_1$, the factor $k$ stands for the position of the common variable, $p^{2k-1}$ is the probability of having precisely these $p^{2k-1}$ different elements in $|A_i\cup A_j|$, the factor $|S^{(k)}|$ stands as we are checking for all the solutions $A_i$ (and then we are seeing how many solutions $A_j$ are there satisfying the first condition).
Finally, the factor $\frac{|S^{(k)}|}{|G|}$ is an upper bound on the number of solutions $A_j$ that share $1$ element with $A_i$ in the $s$-th position.
Observe that the factor $\frac{|S^{(k)}|}{|G|}$ arises as the system is invariant, hence the maps $+g:S^{(k)}\to S^{(k)}$ such that $x\to x+(g,\ldots,g)$ for each $g\in G$ creates a partition in $S^{(k)}$ according to the value on any (fixed) coordinate of the solution set (even if $S^{(k)}$ is empty).
As some of the $\leq k! \frac{|S^{(k)}|}{|G|}$ solutions $x_j$ that have $x_{j,\ell}\in A_j$ for the $\ell$ coordinate (fixed at the beginning) may also have $x_{j,\ell'}\in A_j$ for some $\ell'\neq \ell$, the bound presented is an upper bound (up to a constant depending on $k$).
We argue now on $\Delta_2$.
The arguments for $\Delta_l$, $l\geq 3$ are similar.
For $\Delta_2$, there are ${k\choose 2}$ possible choices for the coordinates to be shared. Also, there are $\pi_{U}(S^{(k)})$, with $U=\{i,j\}$, $|U|=2$ possible pairs of values $x_{i}=g_1$ and $x_{j}=g_2$. Observe that each of the $S^{(k)}$ solutions have, at most, $\frac{|S|}{|\pi_{U}(S)|}$ elements sharing the same pair of $(x_i=g_1,x_j=g_2)$. Indeed,
the number of elements in $S$ that have $(x_i,x_j)$ as the $i$-th and the $j$-th variables respectively is, either $0$, or the size of the homogeneous system with the addition of the equations $x_i=0$ and $x_j=0$ (each solution is the sum of one in the new homogeneous system plus a particular solution with the addition of $x_i=g_1$, $x_j=g_2$). Hence, $|S|/|\pi_{\{i,j\}}(S)|$ is the appropriate number when it is different from $0$.
Since $\frac{S}{\pi_{U}(S)}\leq \frac{S}{\pi_{U}(S^{(k)})}$ for each $U \in {[k] \choose 2}$,
the factor in $\Delta_2$ is, at most, a constant depending on $k$ times
\begin{displaymath}
	p^{2k-2}
	\sum_{U\in {[k]\choose 2}}\frac{|S|}{|\pi_{U}(S^{(k)})|} |S^{(k)}|.
\end{displaymath}
By normality, $\frac{|S|}{|\pi_{U}(S^{(k)})|} $ is asymptotically equivalent to $\frac{|S^{(k)}|}{|\pi_{U}(S^{(k)})|} $.
Summing all bounds for $\Delta_l$, $1\leq l \leq k$ we can bound $\Delta$ by
\begin{displaymath}
	\Delta\leq c(k) \sum_{s=1}^k  p^{2k-s}
	\sum_{U\in {[k]\choose s}}\frac{|S|}{|\pi_{U}(S^{(k)})|} |S^{(k)}|,
\end{displaymath}
for a certain constant $c(k)$ only depending on $k$. If
\begin{equation}\label{eq.alguna}
	p|G|^{2k-s}\frac{|S|}{|\pi_{U}(S^{(k)})|} |S^{(k)}|=o(p|G|^{2k} |S^{(k)}|^2)=o(\mathbb{E}(X)^2),
\end{equation}
then we can use \cite[Corollary~4.3.4]{AlSp92} to show the result. Expression (\ref{eq.alguna}) follows from
$\lim_{i\to \infty}\frac{|S_i|}{|S_i^{(k)}|}=1$, and that, by hypothesis,
$o(p(i))=\frac{1}{|G_i|}$ (so that $\mathbb{E}(X)\to \infty$), and $o(p(i))= |\pi_U(S_i^{(k)}|^{-1/|U|}$ for each $U\subset [1,k]$. Hence the result is proven.
\end{proof}
Let us observe that the condition \ref{cond.3} from normality has not been needed for Proposition~\ref{p.hom_concentration}. Additionally, the invariant systems that satisfy \ref{cond.4} also satisfy \ref{cond-1}.
\begin{proposition}\label{r.unif_hom_sys}
	Any sequence of invariant homomorphisms systems $\{(S_i,G_i)\}_{i\geq 1}$ of degree $k$ such that $\lim_{i\to \infty}\frac{|S_i^{(k)}|}{|S_i|}=1$ (Condition \ref{cond.4})
	satisfies the uniformity condition from Definition~\ref{d.g-uniform}.
\end{proposition}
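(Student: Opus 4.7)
The plan is to exploit the group-theoretic structure of invariant homomorphism systems. Since $S = \ker M$ is a subgroup of $G^k$ and $\pi_U|_S:S\to\pi_U(S)$ is a group homomorphism, all non-empty fibres of $\pi_U|_S$ are cosets of $T:=S\cap \pi_U^{-1}(0)$ and share the common size $N:=|T|$. In particular, the maximum fibre size in the definition of $\rho$-$k$-uniformity is already at most $N$, so the task reduces to proving a uniform lower bound on $|\pi_U^{-1}(y)\cap S^{(k)}|$ as $y$ ranges over $\pi_U(S^{(k)})$.

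Fix such a $y$ and a representative $x^*\in \pi_U^{-1}(y)\cap S^{(k)}$, so the fibre equals $x^*+T$. For each pair $i<j$ set $D_{ij}=\{x\in S:x_i=x_j\}$, $\phi_{ij}:T\to G$, $\phi_{ij}(t)=t_i-t_j$, and $H_{ij}:=\phi_{ij}(T)\leq G$. A direct computation shows that $|(x^*+T)\cap D_{ij}|=N/|H_{ij}|$ if $x^*_j-x^*_i\in H_{ij}$ and $0$ otherwise. I would then split the pairs into three cases: (i) if $\{i,j\}\subseteq U$, then $t_i=t_j=0$ for every $t\in T$ while $x^*_i\neq x^*_j$, so the intersection is empty; (ii) if $H_{ij}=\{0\}$ the same inequality $x^*_i\neq x^*_j$ forces emptiness, since otherwise the whole fibre would lie in $D_{ij}$, contradicting $y\in\pi_U(S^{(k)})$; (iii) in the remaining cases the contribution is at most $N/|H_{ij}|$.

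A union bound over the pairs in case (iii) then yields
\[
|\pi_U^{-1}(y)\cap S^{(k)}|\;\geq\; N\Bigl(1 - \sum_{\substack{\{i,j\}\not\subseteq U\\ H_{ij}\neq\{0\}}} 1/|H_{ij}|\Bigr).
\]
The final step, which I expect to be the main obstacle, is to convert the hypothesis $|S^{(k)}|/|S|\to 1$ (equivalently $|D_{ij}|/|S|\to 0$ for every pair with $i\neq j$) into a uniform positive lower bound on this ratio. The key identity is the factorisation $|D_{ij}|/|S|=(1/|H_{ij}|)\cdot(|\pi_U(D_{ij})|/|\pi_U(S)|)$: pairs with $|H_{ij}|\to\infty$ contribute a vanishing $1/|H_{ij}|$ to the displayed sum directly, while for pairs where $|H_{ij}|$ remains bounded the second factor $|\pi_U(D_{ij})|/|\pi_U(S)|$ must tend to $0$, so that $y\in\pi_U(D_{ij})$ (equivalently $x^*_j-x^*_i\in H_{ij}$) happens only on an asymptotically negligible subset of $\pi_U(S^{(k)})$; a careful bookkeeping over the finitely many pairs that can simultaneously contribute then extracts the desired uniform $\rho$ and indeed shows $\rho\to 1$.
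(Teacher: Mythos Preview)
Your approach starts from the same observation the paper relies on: since $S=\ker M$ is a subgroup of $G^k$, the nonempty fibres of $\pi_U|_S$ are cosets of $T=S\cap\pi_U^{-1}(0)$ and hence all have the common size $N$. The paper's entire proof is literally this one sentence---it merely points back to the coset argument inside the proof of Proposition~\ref{p.hom_concentration} and stops there. In particular, the paper does not spell out the passage from equal fibres of $S$ to approximately equal fibres of $S^{(k)}$; you attempt to supply exactly that passage, so your write-up is strictly more detailed than what the paper offers.

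That said, your final step does not close. The displayed union bound
\[
|\pi_U^{-1}(y)\cap S^{(k)}|\;\geq\; N\Bigl(1-\textstyle\sum 1/|H_{ij}|\Bigr)
\]
can be negative once several case~(iii) pairs have $|H_{ij}^T|$ small but~$>1$, and your proposed remedy---that for such pairs $y\in\pi_U(D_{ij})$ only on a vanishing fraction of $\pi_U(S^{(k)})$---does not help: Definition~\ref{d.g-uniform} is a \emph{pointwise} condition, so exhibiting a negligible bad set of $y$'s says nothing about the fibres over that set. The stronger assertion that $\rho\to 1$ is in fact false. Over $G=\mathbb Z_{2n}$ with $k=5$, take $S$ generated by the diagonal $(1,1,1,1,1)$, an AP vector $(0,1,2,3,4)$, the vector $(0,0,1,1,0)$, the $2$-torsion vector $(0,0,0,n,n)$, and $(0,0,0,0,1)$; one checks that Condition~\ref{cond.4} holds, yet for $U=\{1,2\}$ one has $H_{34}^T=\{0,n\}$ identically, and for $y=(y_1,y_1+n)\in\pi_U(S^{(5)})$ exactly half of the fibre lies in $D_{34}$, so the fibre ratio is $\sim 1/2$ while generic fibres have ratio $\sim 1$.

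A route that does give a positive~$\rho$ (depending on the sequence) is to replace the union bound over the ``bounded'' pairs by an intersection: since each nonempty $T_{ij}$ is a coset of $K_{ij}=\ker\phi_{ij}$ \emph{not} containing~$0$, the subgroup $\bigcap_{(i,j)\in P}K_{ij}$ is disjoint from every such $T_{ij}$ and has index at most $\prod_{(i,j)\in P}|H_{ij}^T|$ in~$T$; combining this with the crude union bound over the pairs whose $|H_{ij}^T|\to\infty$ then yields a fibre of size at least $cN$. Your ``careful bookkeeping'' would need an argument of this flavour rather than a density statement about~$y$.
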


The proof of Proposition~\ref{r.unif_hom_sys} is contained in the argument to prove Proposition~\ref{p.hom_concentration} when we see the bound on the number of solutions that each equation has.

%
After all the preliminary results have been shown, we are ready to prove the main result of this section.
\begin{theorem}[Threshold function for homomorphism systems]\label{t.ramdom_sparse_hom}
Let $1>\delta>0$
be a positive real number, and let $\{(S_i,G_i)\}_{i\geq 1}$ be an normal sequence of systems of degree $k$ arising from homomorphisms of finite abelian groups.
Let
\begin{equation}\label{eq.0-state_2}
	p_{(S_i,G_i)}=\max_{\ell\in[2,k]}
	\left(\frac{\alpha^k_{\ell}(S_i,G_i)}{\alpha^{k}_{1}(S_i,G_i)} \right)^{\frac{1}{\ell-1}}.
\end{equation}
Then, there exist constants $c_1,c_2$, depending on $\delta$ and $k$ such that
\begin{displaymath}
	 \lim_{i\to \infty}
\mathbb{P}([G_i]_p\,\,\mathrm{is}\,\, (\delta,S_i)_k\mathrm{-stable})=
\left\{
\begin{array}{cc}
	1 & \text{if } p\geq c_1 p_{(S_i,G_i)}, \\
	0 & \text{if } p< c_2 p_{(S_i,G_i)}.\\
\end{array}
\right.
\end{displaymath}
\end{theorem}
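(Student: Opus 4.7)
The plan is to derive this statement by stitching together Theorem~\ref{thm: random2} for the 1-statement and Proposition~\ref{p.zero_statement_uniform} (together with Proposition~\ref{t.zero_statement0} for the sparsest regime of $p$) for the 0-statement. The bulk of the work has already been done in the previous sections: what remains is to verify that normal sequences arising from invariant homomorphisms of finite abelian groups satisfy every hypothesis of these results with constants that are uniform across the sequence.

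For the 1-statement I would first invoke Lemma~\ref{l.V-property_invariant_hom}, which supplies the V-property with a universal function $\gamma=\gamma(\delta,k)$ independent of $i$; this is precisely condition \ref{cond.3} of normality. Combining the V-property with condition \ref{cond.4} (asymptotic dominance of $|S_i^{(k)}|$ in $|S_i|$) shows that, for $i$ large enough, every subset of $G_i$ of size at least $\delta|G_i|/2$ contains a configuration of $S_i^{(k)}$ and that $(\gamma-1)|S_i|+|S_i^{(k)}|>0$. These are exactly the hypotheses of Theorem~\ref{thm: random2}, which directly yields the 1-statement with a constant $c_1=c_1(\delta,\gamma,k)$ depending only on $\delta$ and $k$.

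For the 0-statement I would apply Proposition~\ref{p.zero_statement_uniform}, whose only additional requirements beyond normality are the concentration property of Definition~\ref{d.concentration_intersection} and the $\rho$-$k$-uniformity of Definition~\ref{d.g-uniform}. The former is granted by Proposition~\ref{p.hom_concentration}; the hypothesis (ii) there is verified since, in the relevant range $p\le c_2 p_{(S_i,G_i)}$ with $p\ge\max\{|G_i|^{-1},|S_i^{(k)}|^{-1/k}\}$, one has $p\cdot|\pi_U(S_i)|^{1/|U|}\to\infty$ thanks to the projection bounds $|S_i|/|\pi_U(S_i)|\le\alpha_{|U|}(S_i,G_i)$ and normality. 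The uniformity is provided directly by Proposition~\ref{r.unif_hom_sys}, whose only assumption is condition \ref{cond.4}. In the complementary, sparser regime $p=O(\max\{|G_i|^{-1},|S_i^{(k)}|^{-1/k}\})$, Proposition~\ref{t.zero_statement0} applies because $|G_i|^k/|S_i^{(k)}|\to\infty$, which again follows from conditions \ref{nc1} and \ref{cond.4}.

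The main friction point will be reconciling the two formulations of the critical probability. Theorem~\ref{t.zero_statement} is phrased in terms of $|G_i|/|\pi_U(S_i^{(k)})|$, whereas the statement uses the restricted degrees of freedom $\alpha^k_\ell/\alpha^k_1$ appearing in \eqref{eq.0-state_2}. The translation between the two is exactly what is performed inside the proof of Proposition~\ref{p.zero_statement_uniform}: $\rho$-$k$-uniformity ensures that $\alpha^k_\ell$ is comparable, up to the constant $\rho$, to $\max_{|U|=\ell}|S_i^{(k)}|/|\pi_U(S_i^{(k)})|$, while condition \ref{nc1} plus surjectivity of the single-coordinate projections (which can be arranged at the cost of an adjustment of constants, as already handled in the closing paragraph of Proposition~\ref{p.zero_statement_uniform}) identifies $\alpha^k_1$ with $|S_i^{(k)}|/|G_i|$. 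Once this identification is made, the constants $c_1$ and $c_2$ depend only on $\delta$ and $k$, with the implicit dependence on $\gamma$ and $\rho$ absorbed since both are universal along the sequence, and the theorem follows.
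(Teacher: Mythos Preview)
Your treatment of the 1-statement matches the paper's exactly, and your overall plan for the 0-statement (uniformity via Proposition~\ref{r.unif_hom_sys}, concentration via Proposition~\ref{p.hom_concentration}, then Proposition~\ref{p.zero_statement_uniform} for large $p$ and Proposition~\ref{t.zero_statement0} for small $p$) is the paper's plan as well. The paper explicitly flags, however, ``a discrepancy between the range of probabilities given by Proposition~\ref{p.hom_concentration} and those used in the hypothesis of Proposition~\ref{p.zero_statement_uniform}'', and this is precisely the point where your argument has a gap.

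You assert that condition~(ii) of Proposition~\ref{p.hom_concentration} holds throughout the range $\max\{|G_i|^{-1},|S_i^{(k)}|^{-1/k}\}\ll p\le c_2\,p_{(S_i,G_i)}$, citing the bound $|\pi_U(S_i)|\ge |S_i|/\alpha_{|U|}$. But this bound is not strong enough: it can happen that $|\pi_U(S_i^{(k)})|^{-1/|U|}\gg |S_i^{(k)}|^{-1/k}$ for some $U$ with $2\le |U|<k$, so that there is a genuine interval of $p$ on which Proposition~\ref{t.zero_statement0} (applied to $S_i$) no longer covers $p$ while Proposition~\ref{p.hom_concentration} still fails. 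A concrete instance: in $G=\Z_n$ with $k=5$ and the invariant system $x_1+x_2-2x_3=0$ (variables $x_4,x_5$ free), one has $|S|\asymp n^{4}$, $p_{(S,G)}\asymp n^{-1/2}$, yet for $U=\{1,2,3\}$ one gets $|\pi_U(S)|\asymp n^{2}$, so $|\pi_U(S)|^{-1/3}=n^{-2/3}\gg |S|^{-1/5}=n^{-4/5}$. For $p$ in the window $n^{-4/5}\ll p\ll n^{-2/3}$, neither piece of your two-regime split applies.

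The paper closes this gap in two steps. First it proves the asymptotic separation \eqref{eq.gap_hom}, namely that $\max_{U}|\pi_U(S_i^{(k)})|^{-1/|U|}=o\bigl(p_{(S_i,G_i)}\bigr)$, so the problematic window always lies strictly below $c_2\,p_{(S_i,G_i)}$. Second, and this is the idea you are missing, for $p$ in that window it passes to the \emph{projected} configuration system $(\pi_{U_0}(S_i^{(k)}),G_i)$, where $U_0$ realises the maximum of $|\pi_U(S_i^{(k)})|^{-1/|U|}$, and applies Proposition~\ref{t.zero_statement0} to that lower-degree system (noting that absence of solutions in $\pi_{U_0}(S_i^{(k)})$ forces absence of solutions in $S_i^{(k)}$). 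Your write-up needs both of these ingredients to be complete.
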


Let us comment that the normal condition (more precisely, the first part of \ref{nc1}, together with \ref{cond.4}) warranties that the solution set in the invariant systems has asymptotically more solutions than the trivial ones. Additionally, condition~\ref{cond.4} allows the use of Theorem~\ref{thm: random2}.

\begin{proof}
We put all the pieces gathered in Section~\ref{sec:random} and from the beginning of the current section. We use $(S,G)$ to denote a generic $(S_i,G_i)$ in the proof. The little-$o$'s are referred to asymptotic behaviour when $i\to \infty$ (or $|G|\to\infty$).

First, Theorem~\ref{thm: random2} can be applied to find the constant $c_1$ and show the first part of the result as $\lim_{i\to \infty} |G_i|=\infty$ and the system satisfy the V-property thanks to Lemma~\ref{l.V-property_invariant_hom}. Indeed, Lemma~\ref{l.V-property_invariant_hom} warranties that $\gamma$ only depends on $k$ and on $\delta$, but is independent on the particular homomorphisms or group, hence it is uniform for the family of homomorphisms. The first part of \ref{nc1}, together with \ref{cond.4} allows us to warranty that, for $|G|$ large enough depending on $\delta$ and $k$ (and on how fast the limit goes to $\infty$), any set of density $\delta |G|/2$ has solutions in $S^{(k)}$ and  condition $\xi_{\text{\ref{t.count_conf_1}}}/|S^{(k)}|>a>0$ is satisfied for some $a$ depending on $\delta$ and on the family of systems.

Let us now show the 0-statement.
In this case, there is a discrepancy between the range of probabilities given by
Proposition~\ref{p.hom_concentration} and those used in the hypothesis of Proposition~\ref{p.zero_statement_uniform} (or Theorem~\ref{t.zero_statement}) that we address in the following argument.
The uniformity of the systems of configurations coming from homomorphisms, Definition~\ref{d.g-uniform}, is satisfied by Proposition~\ref{r.unif_hom_sys}.
Hence, up to a constant factor depending on $k$ and on the family of systems, we can use \eqref{eq.0-state} as an alternative definition of $p_{(S,G)}$. Let $c_2=c_{\text{\ref{t.zero_statement}}}$ be the constant arising in the upper bound on the probability coming from Theorem~\ref{t.zero_statement}, which contains the constant $c_2$ according to the chosen definition of $p_{(S,G)}$.
Observe that due to the invariance of the system, the argument from the proof of Proposition~\ref{p.hom_concentration} regarding $\Delta_1$, together with a combination of the first part of \ref{nc1} and \ref{cond.4} implies that
\begin{displaymath}
	\max_{\substack{U\subseteq [1,k]\\ |U|\geq 2}}
	\left(\frac{|G_i|
	}{|\pi_U(S^{(k)})|}\right)^{\frac{1}{|U|-1}}\leq 1,
\end{displaymath}
hence
\begin{equation}\label{e.first_ineq_hom_0}
	p\leq c_2 p_{(S,G)}=c_2 \min\left\{\max_{\substack{U\subseteq [1,k]\\ |U|\geq 2}}
	\left(\frac{|G_i|
	}{|\pi_U(S^{(k)})|}\right)^{\frac{1}{|U|-1}},1\right\}=c_2 \max_{\substack{U\subseteq [1,k]\\ |U|\geq 2}}
	\left(\frac{|G|
	}{|\pi_U(S^{(k)})|}\right)^{\frac{1}{|U|-1}}.
\end{equation}
Since $|\pi_U(S^{(k)})|\leq |G|^{|U|}$ and $|\pi_{[1,k]}(S^{(k)})|=o(|G|^k)$, then
\begin{equation}\label{eq.hola_gramola_jajaja}
o\left(\max_{\substack{U\subseteq [1,k]\\ |U|\geq 2}} |\pi_U(S^{(k)})|^{-\frac{1}{|U|}}\right)=|G|^{-1} \text{ and }
o\left(\max_{\substack{U\subseteq [1,k]\\ |U|\geq 2}}
	\left(\frac{|G|
	}{|\pi_U(S^{(k)})|}\right)^{\frac{1}{|U|-1}}\right)=|G|^{-1}\; .
\end{equation}
Observe that for each $U$
%
\begin{equation}\label{eq.hola_gramola_1}
	\left(\frac{|G|
	}{|\pi_U(S^{(k)})|}\right)^{\frac{1}{|U|-1}}
\geq
	 |\pi_U(S^{(k)})|^{-\frac{1}{|U|}}.
\end{equation}
 Indeed, if
\begin{equation}\label{eq.hola_gramola}
	\left(\frac{|G|
	}{|\pi_U(S^{(k)})|}\right)^{\frac{1}{|U|-1}}
<
	 |\pi_U(S^{(k)})|^{-\frac{1}{|U|}}
\end{equation} for some $U$, then
\begin{equation}\label{eq.auxi_1}
	|\pi_U(S^{(k)})|
> |G|^{|U|},
\end{equation}
which contradicts $|\pi_U(S^{(k)})|\leq |G|^{|U|}$.
 Hence
\begin{equation}\label{eq.gap_hom_0}
\max_{\substack{U\subseteq [1,k]\\ |U|\geq 2}}
	\left(\frac{|G|
	}{|\pi_U(S^{(k)})|}\right)^{\frac{1}{|U|-1}}\geq \max_{\substack{U\subseteq [1,k]\\ |U|\geq 2}} |\pi_U(S^{(k)})|^{-\frac{1}{|U|}}.
\end{equation}
Now observe that, if
\begin{equation}\label{eq.gap_hom_1}
\max_{\substack{U\subseteq [1,k]\\ |U|\geq 2}}
	\left(\frac{|G|
	}{|\pi_U(S^{(k)})|}\right)^{\frac{1}{|U|-1}}\approx \max_{\substack{U\subseteq [1,k]\\ |U|\geq 2}} |\pi_U(S^{(k)})|^{-\frac{1}{|U|}},
\end{equation}
then \eqref{eq.hola_gramola_1} implies that
\begin{equation}\label{eq.hola_gramola_2}
	\left(\frac{|G|
	}{|\pi_{U_0}(S^{(k)})|}\right)^{\frac{1}{|U_0|-1}}
\approx
	 |\pi_{U_0}(S^{(k)})|^{-\frac{1}{|U_0|}}
\end{equation}
for the $U_0$ giving the maximum. Hence by the argument from \eqref{eq.hola_gramola} to \eqref{eq.auxi_1} we obtain
\begin{equation*}
 \max_{\substack{U\subseteq [1,k]\\ |U|\geq 2}} |\pi_U(S^{(k)})|^{-\frac{1}{|U|}}=|\pi_{U_0}(S^{(k)})|^{-\frac{1}{|U_0|}}\approx |G|^{-1},
\end{equation*}
which contradicts the second part of \eqref{eq.hola_gramola_jajaja}. Therefore \eqref{eq.gap_hom_0} and the impossibility of \eqref{eq.gap_hom_1} implies
\begin{equation}\label{eq.gap_hom}
o\left(\max_{\substack{U\subseteq [1,k]\\ |U|\geq 2}}
	\left(\frac{|G|
	}{|\pi_U(S^{(k)})|}\right)^{\frac{1}{|U|-1}}\right)=\max_{\substack{U\subseteq [1,k]\\ |U|\geq 2}} |\pi_U(S^{(k)})|^{-\frac{1}{|U|}}
\end{equation}
holds.
Let us now compare $p$ with the right-hand side of Equation~\eqref{eq.gap_hom}. Assume first that
\begin{equation} \label{e.second_ineq_hom_0}
	o(p)=\max_{\substack{U\subseteq [1,k]\\ |U|\geq 2}} |\pi_U(S^{(k)})|^{-\frac{1}{|U|}}.
\end{equation}
Relation~\eqref{e.second_ineq_hom_0} implies that
\begin{equation}\label{eq.absolute_lower_bound}
o(p)=|S^{(k)}|^{-\frac{1}{k}}.
\end{equation}
Therefore, we can use Proposition~\ref{p.hom_concentration} to show concentration around the mean in this interval. In particular, we can apply Theorem~\ref{t.zero_statement} to obtain the 0-statement in the region defined by (\ref{e.first_ineq_hom_0}) and (\ref{e.second_ineq_hom_0}) as there is an asymptotic gap between the two by (\ref{eq.gap_hom}), and additionally (\ref{eq.absolute_lower_bound}) holds.

Assume now that $$\max_{\substack{U\subseteq [1,k]\\ |U|\geq 2}} |\pi_U(S^{(k)})|^{-\frac{1}{|U|}}=|S^{(k)}|^{1/k}.$$
Then Proposition~\ref{t.zero_statement0} allows to complete the remaining range of probabilities left by Assumption \eqref{e.second_ineq_hom_0}.

In the cases where $\max_{\substack{U\subseteq [1,k]\\ |U|\geq 2}} |\pi_U(S^{(k)})|^{-\frac{1}{|U|}}>|S^{(k)}|^{1/k}$, consider $U_0$ to be the set for which $$\max_{\substack{U\subseteq [1,k]\\ |U|\geq 2}} |\pi_U(S^{(k)})|^{-\frac{1}{|U|}}=|\pi_{U_0}(S^{(k)})|^{1/|U_0|},$$ and consider a new family of systems of configurations $(S,G)=(\pi_{U_0}(S^{(k)}),G)$. To conclude, observe that if $|\pi_{U_0}(S^{(k)})|^{1/|U_0|}=o(|G|)$, then we can apply the reasoning of Proposition~\ref{t.zero_statement0} to this new system of configurations to show the 0-statement (possibly with an adjustment in the constant $c_2$). The case $|\pi_{U_0}(S^{(k)})|^{1/|U_0|}\approx |G|$ cannot occur by (\ref{eq.hola_gramola_jajaja}).
\end{proof}

\subsubsection{Examples of configurations}~\label{ss.example}
So far, authors have found results on the existence
of  configuration in subsets by studying systems of configurations arising from integer linear systems in $[1,n]$ invariant by translations \cite{Sch12,CG10}, integer linear systems over abelian groups \cite{ST12} or linear systems over finite fields \cite{ST12}. These generalize the case for $k$-term arithmetic progressions \cite{BMS14,CG10,Sha10,ST12} (see also further comments in Subsection \ref{subsec:sys_lin_eq}). Here we present some examples that do not seem to follow from the previous results.

All the previous systems of configurations can be seen as prominent particular cases
of homomorphisms of finite abelian groups, context in which an arithmetic removal lemma can be found in \cite[Theorem~2]{Vena14}; these include linear homothetic-to-a-point configurations in products of finite abelian groups that were considered by Tao~\cite{T12}.
 The framework of homomorphisms also includes the configurations from the multidimensional Szemer\'edi setting \cite{Furst77,T12}, some of which have been treated in \cite{BMS14,Sch12}.

The following theorem illustrates an application of Theorem~\ref{t.count_conf_1} which can not be directly obtained form the previously existing tools.

\begin{theorem}[Rectangles in abelian groups]\label{thm:square} Let $\{G_{i}\}_{i\geq 1}$ be a sequence of finite abelian groups, $H_i,K_i$ subgroups of $G_i$ and such that $|H_i|,|K_i|, |G_i|\to \infty$.
For each $\delta>0$ with $\delta<1/40$ there exist $C=C(\delta)$ and $i_0>0$, depending on the family $\{G_i,H_i,K_i\}_{i\geq 1}$ and on $\delta$, for which the following holds. Let
\begin{displaymath}
	S_i=\{(x,x+a,x+b,x+a+b):\, x\in G_i, a\in H_i, b\in K_i\}
\end{displaymath}
be the set of configurations. Assume that $\max\{|H_i|,|K_i|\}\leq(|S_i^{(4)}|/|G_i|)^{2/3}$.
For each $i\geq i_0$ the number of sets free of configurations in $S_i^{(4)}$ and with cardinality $t$ such that
\begin{displaymath}
	t> \frac{C}{\delta}\left(\frac{|G_i|^{4}}{|S_i^{(4)}|}\right)^{1/3}
\end{displaymath}
is bounded from above by ${ 2 \delta |G_i| \choose t}$.
\end{theorem}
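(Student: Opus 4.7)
The plan is to realize $(S_i,G_i)$ as a configuration system arising from an invariant homomorphism and then apply Theorem~\ref{t.count_conf_1} with $k=4$. Let $M_i\colon G_i^4 \to G_i/H_i\oplus G_i/K_i\oplus G_i$ be given by $M_i(x_1,x_2,x_3,x_4)=(x_2-x_1+H_i,\,x_3-x_1+K_i,\,x_1-x_2-x_3+x_4)$; this is a homomorphism of finite abelian groups, sends every diagonal element to $0$, and has kernel exactly $S_i$. Lemma~\ref{l.V-property_invariant_hom} then supplies a V-property function $\gamma(\varepsilon,4)$ that is universal across the sequence. Under the parameterization $(x,a,b)\in G_i\times H_i\times K_i$ we have $|S_i|=|G_i||H_i||K_i|$, and a tuple lies in $S_i\setminus S_i^{(4)}$ only when $a=0$, $b=0$, $a+b=0$, or $a-b=0$; the last two conditions yield at most $|H_i\cap K_i|$ pairs each, so $|S_i\setminus S_i^{(4)}|=O(|G_i|\max(|H_i|,|K_i|))=o(|S_i|)$ as $\min(|H_i|,|K_i|)\to\infty$. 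Combining this with the V-property shows that for $i$ sufficiently large every $X\subseteq G_i$ with $|X|\ge\delta|G_i|/2$ contains at least $(\gamma(\delta/2,4)/2)|S_i^{(4)}|>0$ solutions in $S_i^{(4)}$, which verifies the remaining hypothesis of Theorem~\ref{t.count_conf_1}.

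Next I would compute the restricted degrees of freedom $\alpha_\ell^4$. Fixing one coordinate leaves $(a,b)\in H_i\times K_i$ free, so $\alpha_1^4 \le |H_i||K_i|$, and this is asymptotically tight once the distinctness condition is accounted for. For $|U|=2$ I would check the six pairs: each of $(1,2),(3,4),(1,3),(2,4)$ pins down either $a$ or $b$, leaving the other free with at most $\max(|H_i|,|K_i|)$ choices, while $(1,4)$ and $(2,3)$ prescribe $a+b$ or $b-a$, whose fibres in $H_i\times K_i$ are (empty or) cosets of $H_i\cap K_i$; hence $\alpha_2^4\le\max(|H_i|,|K_i|)$. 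Any three of the four entries determine the fourth through $x_4=x_2+x_3-x_1$, so $\alpha_3^4\le 1$ and trivially $\alpha_4^4=1$.

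To conclude I would substitute into the maximum appearing in Theorem~\ref{t.count_conf_1}. Up to constants depending only on $k=4$, the three candidate quantities are $\alpha_2^4/\alpha_1^4=\Theta(1/\min(|H_i|,|K_i|))$, $(\alpha_3^4/\alpha_1^4)^{1/2}=\Theta((|H_i||K_i|)^{-1/2})$, and $(\alpha_4^4/\alpha_1^4)^{1/3}=\Theta((|H_i||K_i|)^{-1/3})$. The $\ell=3$ quantity is dominated by $\ell=4$ since $|H_i||K_i|\to\infty$. The hypothesis $\max(|H_i|,|K_i|)\le(|S_i^{(4)}|/|G_i|)^{2/3}=(1+o(1))(|H_i||K_i|)^{2/3}$ reads, after assuming WLOG $|H_i|\le|K_i|$, as $|K_i|\le(1+o(1))|H_i|^2$, which is precisely the inequality $\min(|H_i|,|K_i|)\ge(1-o(1))(|H_i||K_i|)^{1/3}$ that makes the $\ell=4$ term dominate the $\ell=2$ term. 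Therefore the maximum equals $\Theta((|H_i||K_i|)^{-1/3})$, and the lower bound on $t$ in Theorem~\ref{t.count_conf_1} becomes $t\ge(C/\delta)(|G_i|^4/|S_i^{(4)}|)^{1/3}$. Applying the theorem with $\beta=2\delta$ (so $\delta<1/40$ is consistent with $\delta=\min(\beta/2,1/40)$) yields the desired bound $\binom{2\delta|G_i|}{t}$. The main technical step is the case analysis giving the bound on $\alpha_2^4$, together with the recognition that the numerical hypothesis on $\max(|H_i|,|K_i|)$ is exactly the threshold at which the $\ell=4$ term dominates in the maximum.
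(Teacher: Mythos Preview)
Your argument is correct and follows essentially the same route as the paper: express $S_i$ as the kernel of an invariant homomorphism, invoke the V-property from Lemma~\ref{l.V-property_invariant_hom}, compute $\alpha_\ell^4$ by the same case analysis on pairs of coordinates, and use the hypothesis on $\max(|H_i|,|K_i|)$ to show the $\ell=4$ term dominates. The only minor point to note is that your $M_i$ lands in $G_i/H_i\oplus G_i/K_i\oplus G_i$ rather than a power of $G_i$; since finite abelian groups are self-dual, each quotient embeds back into $G_i$, so this can be rewritten as a map $G_i^4\to G_i^3$ as required by the framework (the paper's proof is equally informal on this point).
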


Let us remark that Theorem~\ref{thm:square} can be proven, in some cases, using the \cite[Theorem~B.1]{T12}. For instance, let the set of squares in $\Z_i\times \Z_i$ be $S_i=\{\left((x,y),(x+a,y),(x,y+a),(x+a,y+a)\right):\, x,y,a\in \Z_i\}$. By considering all possible ratios between the sides of the square:
$$\left\{\{\left((x,y),(x+ca,y),(x,y+da),(x+ca,y+da)\right):\, x,y,a\in \Z_i\}\;\right\}_{c,d\in \Z_i}$$
and applying \cite[Theorem~B.1]{T12} to each of them, one can obtain the result for rectangles Theorem~\ref{thm:square} when $K_i=H_i=\Z_i$ and $G_i=K_i\times H_i$. This iterative argument works in some very particular cases, for instance when both $K_i$ and $H_i$ grow with $i$, when their intersection is trivial, and there is a certain decomposition of all the configuration in treatable pieces as before (the rectangles are considered as squares with different ratios between the sides). However, this iterative argument cannot cover cases where, for instance, the exponent of either $K_i$ or $H_i$ is bounded.

\begin{proof}[Proof of Theorem~\ref{thm:square}]
First let us observe that the solution set is isomorphic to the group $G_i\times H_i\times K_i$, which is a subgroup of $G_i^3$ (in $G_i^4$).
Indeed,
 $$S_i=\{(x_1,x_2,x_3,x_4):\, x_2-x_1\in H_i,\, x_3-x_1\in K_i,\, x_4-x_1=x_2-x_1+x_3-x_1\},$$
 which can be defined in terms of the kernel of an homomorphism between abelian groups.
	Assume $\max\{|K_i|,|H_i|\}=|K_i|$.
All the hypothesis of Corollary~\ref{o.counting_sol_free_hom} are satisfied, so we shall check that, under the hypothesis $\max\{|H_i|,|K_i|\}\leq(|S_i^{(4)}|/|G_i|)^{2/3}$, then
\begin{displaymath}
	C' \frac{|G_i|}{\delta}  \max_{\ell\in[2,4]}\left\{ \left(\frac{ \alpha^4_{\ell}}{\alpha^4_{1}} \frac{1}{4}{4\choose \ell} \right)^{\frac{1}{\ell-1}} \right\}=\frac{C}{\delta}\left(\frac{|G_i|^{4}}{|S_i^{(4)}|}\right)^{1/3}.
\end{displaymath}
To compute $\alpha_2^4$ we shall compute, for $U\subset [1,4]$ with $|U|=2$, $\max_{(g_1,g_2)\in G_i} |S_i^{(4)}(A_i,G_i)\cap \pi^{-1}_{U}(g_1,g_2)|$. If $U$ is $\{1,2\}$, $\{1,3\}$, $\{2,4\}$ or $\{3,4\}$, then the size of the preimage is, approximately, $|K_i|$ or $|H_i|$ respectively. In the other cases the sizes depends, essentially, on $K_i\cap H_i$, hence they are smaller. Hence $\alpha_2^4\approx \max\{|K_i|,|H_i|\}=|K_i|$. The second equality holds by assumption.

Let us compute $\alpha_3^4$: if all the elements are different, there is always just one choice as given $3$ points of the above rectangle, the 4th point is always computed uniquely. Hence $\alpha_3^4=1$. Therefore, also $\alpha_4^4=1$.
Consequently,
\begin{displaymath}
\max_{\ell\in[2,4]}\left\{ \left(\frac{ \alpha^4_{\ell}}{\alpha^4_{1}} \right)^{\frac{1}{\ell-1}}\right\}
\approx \max\left\{\left(\frac{|G_i|}{|S_i^{(4)}|}\right)^{\frac{1}{3}},\left(\frac{|G_i|}{|S_i^{(4)}|}\right)^{\frac{1}{2}},\frac{|K_i||G_i|}{|S_i^{(4)}|}\right\}.
\end{displaymath}
Since $S_i^{(4)}\geq |G_i|$ and $\max\{|H_i|,|K_i|\}\leq(|S_i^{(4)}|/|G_i|)^{2/3}$, then
\begin{displaymath}
\max_{\ell\in[2,4]}\left\{ \left(\frac{ \alpha^4_{\ell}}{\alpha^4_{1}} \right)^{\frac{1}{\ell-1}}\right\}
\approx \left(\frac{|G_i|}{|S_i^{(4)}|}\right)^{\frac{1}{3}}
\end{displaymath}
and the result follows from Corollary~\ref{o.counting_sol_free_hom}.
\end{proof}
Let us discuss briefly the case when $G_i=\Z_i^2$, $H_i=\Z_i\times \{0\}$, $K_i=\{0\}\times \Z_i$, or more generally when $G_i=H_i \times K_i$ (with $|H_i|=|K_i|=i$).
Then the size $t$ in Theorem \ref{thm:square} satisfies that $t>C'' i^{4/3}$, for a certain constant $C''$.
Observe that a set $X \subset G_i$ defines a bipartite graph in the following way: for each $(h,k)\in G_i=H_i\times K_i$ construct the bipartite graph with vertex set $V=H_i \cup K_i$ by adding an edge between $h$ and $k$.
If $X$ is solution-free, then the corresponding bipartite graph does not contain cycles of length $4$.
In this situation, it is well known that the maximum number of edges in a bipartite graph (with bipartition of size $i\times i$) without cycles of length four
is  $i^{3/2}+o(i^{3/2})$
as a consequence of a classical result of K\"ovari, S\'os and Tur\'an~\cite{KoSoTu54}.
This shows that for $C i^{4/3}<t<c i^{3/2}$, there is a wide range of values of $t$ to which Theorem~\ref{thm:square} gives a meaningful bound.

Theorem~\ref{thm:square} can be generalized to higher dimensional cubes or to other configurations in a two-dimensional cartesian product with more points.
More precisely, given $\{G_i\}_{i\geq 1}$ a sequence of finite abelian groups and $K_i$, $H_i$ two subgroups (whose size grows when $i\rightarrow \infty$), we can consider configurations $S_i$ of the form
\begin{equation}\label{eq:gen.rectangles}
S_i=\left\{(x,x+a_1,\dots,x+a_r,x+b_1,x+b_1+a_1,\dots, x+b_1+a_r,\dots,x+b_r+a_r):\, x\in G_i, a_j\in H_i, b_j\in K_i\right\},
\end{equation}
which generalizes the configuration studied in Theorem \ref{thm:square}.
The same techniques used to prove Theorem~\ref{thm:square} shows that for $0<\delta<1/40$ and assuming that
\begin{displaymath}
	\max\{|H_i|,|K_i|\}\leq \left(\frac{|G_i|}{|S_i^{(k)}|}\right)^{\frac{r+1}{(r+1)^2-1}},
\end{displaymath}
then there exists a constant $C$, that depends on $\delta$ and in the family $\{G_i,H_i,K_i\}_{i\geq 1}$, and an integer $i_0$ such that the following holds: for $t$ such that
\begin{equation}\label{e.gen-zaran}
	t> \frac{C}{\delta}\left(\frac{|G_i|^{(r+1)^2}}{|S_i^{(k)}|}\right)^{\frac{1}{(r+1)^2-1}},
\end{equation}
then the number of solution-free sets of $G_i$ size $t$ (for $i> i_0$) is bounded from above by ${ 2 \delta |G_i| \choose t}$. Observe also that this result can be even more extended when dealing with an asymmetric version, namely considering $r+1$ elements in $H_i$ and $s+1$ elements in $K_i$.

Similarly as for Theorem~\ref{thm:square}, when we particularize \eqref{eq:gen.rectangles} to $G_i=H_i\times K_i$, $H_i \cap K_i=\{0\}$ and $|H_i|=|K_i|=i$, then each set $X \subset G_i$ without solutions in $S_i$ defines a bipartite graph with $|H_i|$ vertices on each stable set without $K_{r+1,r+1}$ as a subgraph. Even more, each point of the type $x+a_i+b_j$ corresponds to an edge, hence we can obtain results for any bipartite graph. However, in this case, the lower bounds such as \eqref{e.gen-zaran} are more involved and depend heavily on the particular system of configurations considered.

This connects the configurations codified by \eqref{eq:gen.rectangles} with the classical Zarankiewicz problem \cite{KoSoTu54}. The problem is to study the function $\mathrm{zex}(n, K_{t,s})$ counting the largest number of edges in a bipartite graph with $n$ vertices on each stable set which excludes $K_{t,s}$ $(s\geq t)$ as a subgraph.
Some partial results are known for the general Zarankiewicz problem: upper bound of $O(n^{2-1/t})$ was obtained by K\H{o}v\'ari, S\'os and Tur\'an in~\cite{KoSoTu54}.
It is conjectured that this upper bound gives the correct order, but the problem of finding lower bounds (namely, explicit constructions) has been shown to be more difficult.
Some instances are known to close the gap in the order of magnitude:
  Erd\H{o}s, R\'enyi and S\'os \cite{ErdReSo66} found a lower bound for $\mathrm{zex}(n,K_{2,t})$ which match the upper bound, 
Brown \cite{Bro66} and F\"uredi \cite{Fur96}  proved the right order of magnitude for $\mathrm{zex}(n,K_{3,3})$. Finally the case $\mathrm{zex}(n,K_{s,t})$ with $s\geq (t-1)!+1$ was proved by Alon, R\'onyai and Szab\'o \cite{AlRoSz99}.

Continuing in the bipartite graph case,
 Balogh and Samotij showed in \cite{balsam11} that the $\log_2$ of the total number of subgraphs on $n$ vertices without a $K_{s,t}$ is bounded above by $c  n^{2-1/t}$ with an explicit constant that depends on $s$ and $t$. The upper bound from \cite{balsam11} is more accurate than what it can be obtained with Theorem~\ref{thm:square} (or its equivalent for $K_{r+1,r+1}$).
For bipartite graphs on $n$ vertices, one can obtain the V-property using the known cases of Sidorenko's conjecture \cite{sid93}, which states that, in a graph with edge-density $d$, there are $d^{e(H)} n^{e(H)}$ graph homomorphisms from a bipartite graph with $e(H)$ edges. Sidorenko's conjecture is known to hold for complete bipartite graphs (see \cite{szeg14}),  hence giving much better bounds than \cite[Theorem~1]{Vena15}.

Let us mention still another generalization of Theorem \ref{thm:square} by exploiting the homomorphism setting.
Let $G$ be a finite abelian group, $H$ a subgroup of $G$ and $\phi: H\to G$ an injective group homomorphism with $a\neq\pm \phi(a)$ for each $a\in H$.
We consider configuration set of `slanted squares' defined by $\{(x,x+a,x+\phi(a),x+a+\phi(a)) :
\; x\in G, a\in H\}$, which includes the rhombuses $\{((x,y),(x+a_1,y+a_2),(x+a_2,y+a_1),(x+a_1+a_2,y+a_1+a_2)):\,x,y,a_1,a_2\in \Z_i\}$.
Other geometric structures that we may consider are isosceles triangles where the uneven side is located along the $x$-axis $\{((x,y),(x+a_1,y+a_2),(x-a_1,y+a_2)):\,x,y,a_1,a_2\in \Z_i\}$, or all the possible right-angled triangles $\{((x,y),(x+a_1,y+a_2),(x+a_2,y-a_1)):\,x,y,a_1,a_2\in \Z_i\}$.
\subsection{Configurations in $[1,n]^m$} \label{s.cube}
In this section we consider linear configurations in $[1,n]^m$.
These linear configurations arise from a group homomorphisms $M:[\Z^m]^k\to [\Z^m]^k$ with the invariant property (namely, $M(x,\ldots,x)=0$ for each $x\in\Z^m$, see Subsection \ref{s.hom}).
These homomorphisms can be also described as in Section~\ref{ss.shape}. Even though
$[1,n]^m$ is not a group, we shall see that the proof of Corollary~\ref{o.counting_sol_free_hom} can be adapted to obtain an analogous result.
Let $S_n=M^{-1}(0)\cap ([1,n]^m)^k$ denote the set of configurations and
$S_n^{(k)}$ the sets of configurations where all the points are different.

Let us make two important remarks which hold as $M$ is linear, with respect to multiplication by integers, and invariant.
\begin{remark}\label{r.1}
If $(x_1,\ldots,x_k)\in S_n$, $x_i\in [1,n]$, then
$(x_1+x,\ldots,x_k+x)\in S_n$ for some $x\in[-n,n]^m$ with sufficiently small coordinates so that $x_i+x\in[1,n]^m$ for each $i$.
\end{remark}
\begin{remark}\label{r.2}
$(\lambda x_1,\ldots,\lambda x_k)\in S_n$ for some $\lambda\in [0,n]$ sufficiently small so that  $\lambda x_i\in [0,n]$ for each $x_i$.
This makes that $\alpha_1$ is achieved at the point $\mathbf{0}\in [1,n]^m$.
\end{remark}
Let us first show a proposition that represents one of the main differences with respect to Corollary~\ref{o.counting_sol_free_hom}.
\begin{proposition}[$\alpha_i$ in the $m$-dimensional cube] \label{p.alf_1_cube}
Given an invariant homomorphism $M:[\Z^m]^{k}\to [\Z^m]^{k}$, and the sequence of systems of configurations $\{(S_i,[1,i]^m)\}_{i\geq 1}$, with $S_i=M^{-1}(0)\cap  ([1,i]^m)^k$, satisfying the V-property with $\gamma_{\text{\ref{l.V-property_invariant_hom}}}$ depending on $M$ and uniform for the family, there exist a $\lambda>0$ and a constant $c>0$, both depending on $M$, such that
\begin{equation}
	\alpha_i(S_n,[1,n]^m) \leq \alpha_i(\overline{S}_{\lambda n},\Z_{\lambda n}^m) \leq c \alpha_i(S_n,[1,n]^m),
\end{equation}
where $\overline{S}_{\lambda n}$ is the kernel of the natural restriction of $M$, a matrix of integers, to $M_n:[\Z_{\lambda n}^m]^{k}\to [\Z_{\lambda n}^m]^{k}$.
\end{proposition}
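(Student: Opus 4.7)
The plan is to use the fact that for $\lambda$ large enough depending on $M$, integer solutions of $M=0$ with entries in the cube $[1,n]^m$ coincide with the solutions of $M \equiv 0 \pmod{\lambda n}$ whose representatives lie in $[1,n]$, and then to count preimages of $\pi_U$ on both sides via the same parametric description of the kernel.

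For the first inequality, I would let $c_M$ be the $\ell^\infty$--operator norm of $M$ as an integer matrix, so that for any $(x_1,\ldots,x_k) \in ([1,n]^m)^k$, every coordinate of $M(x_1,\ldots,x_k) \in \Z^m$ has absolute value less than $c_M n$. Choosing $\lambda = 2 c_M + 1$, the equation $M(x_1,\ldots,x_k) = 0$ in $\Z^m$ is equivalent to $M(x_1,\ldots,x_k) \equiv 0 \pmod{\lambda n}$. Thus the natural inclusion $[1,n]^m \hookrightarrow \Z_{\lambda n}^m$ realizes $S_n$ as a subset of $\overline{S}_{\lambda n}$ compatibly with the projections $\pi_U$. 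Taking the max over $U$ of size $i$ and $(g_1,\ldots,g_i) \in ([1,n]^m)^i$ yields $\alpha_i(S_n, [1,n]^m) \leq \alpha_i(\overline{S}_{\lambda n}, \Z_{\lambda n}^m)$.

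For the second inequality, I would proceed in two steps. First, exploit the translation invariance of $\overline{S}_{\lambda n}$ in the group $\Z_{\lambda n}^m$ (which holds because $M$ is invariant) to reduce the maximum to configurations with $g_1 = 0$: subtracting $(g_1,\ldots,g_1)$ from every coordinate sends $(g_1,\ldots,g_i)$ to $(0, g_2-g_1,\ldots,g_i-g_1)$ and establishes a bijection on preimages. Second, parameterize solutions via Proposition \ref{p.solution_structure}, writing $x_r = x_0 + \sum_{j=1}^q \lambda_j F_{j,r}$. The first constraint $x_{u_1} = 0$ pins $x_0$ as a linear combination of the $\lambda_j$'s, and the remaining $i-1$ constraints become linear equations on $(\lambda_1,\ldots,\lambda_q)$. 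The set of $(\lambda_1,\ldots,\lambda_q) \in \Z_{\lambda n}^q$ satisfying these equations is an affine sublattice of cardinality $\Theta_M((\lambda n)^{d'})$, where $d'$ is its $\Z$--rank, with constants depending on the Smith normal form invariants (fixed with $M$). By Remark~\ref{r.1} I can translate the prescribed values into the centre of $[1,n]^m$, and by Remark~\ref{r.2} I can scale the corresponding integer parameter tuples so that every resulting $x_r$ lies in $[1,n]^m$; the integer parameters producing such solutions are the points of the same affine sublattice intersected with a box of side $\Omega_M(n)$, of cardinality $\Theta_M(n^{d'})$ by standard lattice--point counting. Hence the ratio is bounded by $\lambda^{d'}$ times a $c'=c'(M)$, establishing the second inequality with $c=c(M)$.

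The main obstacle is to make the two lattice--point counts compatible: the modular count must be dominated by its volume term $(\lambda n)^{d'}$ and the cube count by $n^{d'}$, with torsion and boundary contributions absorbed into the constant. This is handled by choosing $\lambda$ large enough relative to the Smith normal form invariants of $M$ (which are fixed as $n\to\infty$), so that the affine sublattice modulo $\lambda n$ has no nontrivial torsion obstructions and the box is large enough to contain $\Omega_M(n^{d'})$ points.
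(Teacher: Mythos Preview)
Your argument for the first inequality is the same as the paper's. For the second inequality, your approach is correct but genuinely different from the paper's.

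The paper does not count lattice points at all. Instead it applies the V--property (already assumed in the hypotheses) directly: since $B=[1,n]^m$ has density $\lambda^{-m}$ inside $\Z_{\lambda n}^m$, Lemma~\ref{l.V-property_invariant_hom} gives $|S_n|=|B^k\cap \overline{S}_{\lambda n}|\ge \gamma\,|\overline{S}_{\lambda n}|$. It then uses the group--theoretic fact (the same one exploited in the proof of Proposition~\ref{p.hom_concentration}) that in the abelian group $\Z_{\lambda n}^m$ every nonempty fibre of $\pi_U$ on $\overline{S}_{\lambda n}$ has the \emph{same} size, namely $|\overline{S}_{\lambda n}|/|\pi_U(\overline{S}_{\lambda n})|=\alpha_i(\overline{S}_{\lambda n},\Z_{\lambda n}^m)$. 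Summing the $S_n$--fibres over $\pi_U(S_n)\subset\pi_U(\overline{S}_{\lambda n})$ and applying pigeonhole, some tuple in the cube must have at least $\gamma\,\alpha_i(\overline{S}_{\lambda n},\Z_{\lambda n}^m)$ preimages in $S_n$, which is the second inequality with $c=\gamma^{-1}$.

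Your route via Smith normal form and box--lattice intersection yields a more explicit constant and does not use the V--property hypothesis at all, so in a sense it is more elementary. The paper's averaging argument, on the other hand, is much shorter, avoids the technical bookkeeping about torsion and boundary effects that you flag in your last paragraph, and stays entirely within the toolkit already developed in the paper (the V--property plus fibre constancy). Either proof is acceptable; the paper's is the cleaner fit with the surrounding narrative.
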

\begin{proof}
	Since $M$ is a fixed matrix of integers, there exists a $\lambda\in \Z^+$ such that the following holds. Given $M_{\lambda i}:[\Z_{\lambda i}^m]^{k}\to
	[\Z_{\lambda i}^m]^{k}$, the natural restriction to $\Z_{\lambda i}^m$ of $M$, then $\mathbf{x}\in \overline{S}_{\lambda i} \cap ([1,i]^m)^k$ if and only if
	$\mathbf{x}\in S_i$ (this shows $\alpha_i^k(S_n,[1,n]^m) \leq \alpha_i^k(\overline{S}_{\lambda n},\Z_{\lambda n}^m)$). That is, we obtain $\lambda=\lambda(M)$ (the $\lambda$ of the statement) as the minimal value for which, if all the variables have coordinate values in the first $i$ positive integers, then any equation can be read in the integer setting and not in the cyclic group setting.
Now we apply Lemma~\ref{l.V-property_invariant_hom} on $M_{\lambda i}$ for the set $B_i=[1,i]^m\subset \Z_{\lambda i}^m$ as
$|B_i|= \frac{|\Z_{\lambda i}^m|}{\lambda^m}$ (thus $B_i$ represents a positive proportion of $\Z_{\lambda i}^m$).
By the choice of $\lambda$, $\prod_{j=1}^k B_i^k \cap \overline{S}_{\lambda i}= S_i$. This shows the first part of the result.

To prove the second part we observe that, as in the proof of Proposition~\ref{p.hom_concentration}, when we predetermine some of the values for the variables for the system $M_n$, the number of solutions is either $0$ (if the predetermined values renders the system incompatible), or the same number of solutions as in the homogeneous case. In particular, if we select some predetermined values that can be completed to a full solution, the number of solutions projected only depends on the indices of the variables selected. By the first part of the statement already proved, and an averaging argument, we conclude that, for each $U\subset [1,k]$ there exists a $c=c(M)$ and a solutions with all the variables in $[1,n]^m$ with the following property: the number of solutions projected to it when from $S_n$ is a constant $c$ away from the ones projected from $\overline{S}_{\lambda n}$. Therefore, for every $i$, $\alpha_i(\overline{S}_{\lambda n},\Z_{\lambda n}^m) \leq c \alpha_i(S_n,[1,n]^m)$. \end{proof}

\begin{observation}\label{o.a-to-ak-box}
	If instead of $\alpha_i$ we consider $\alpha_i^k$ the results will be similar as if $x_i\neq x_j$ in the given subsystem and we impose the equation $x_i=x_j$, the difference in terms of the sizes of the solution sets is, at least $n$. Therefore we have that $\alpha_i^k\approx \alpha_i$ in this case.
\end{observation}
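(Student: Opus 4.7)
The forward bound $\alpha_i^k \le \alpha_i$ is immediate from $S_n^{(k)}\subseteq S_n$. For the reverse bound, my plan is to fix $U\subset[1,k]$ with $|U|=i$ and a tuple $(g_1,\dots,g_i)$ realizing $\alpha_i(S_n,[1,n]^m)$ (which by invariance of $M$ we may take to have pairwise distinct entries), and to use the inclusion
\[
S_n\cap\pi_U^{-1}(g_1,\dots,g_i)\setminus S_n^{(k)}\;\subseteq\;\bigcup_{\{j,j'\}\subset[1,k]}\{\mathbf{x}\in S_n\cap\pi_U^{-1}(g_1,\dots,g_i):\,x_j=x_{j'}\}.
\]
Since the number of pairs $\{j,j'\}$ is the constant $\binom{k}{2}$, it suffices to show that each set on the right has size $O(\alpha_i(S_n,[1,n]^m)/n)$; this will give $\alpha_i-\alpha_i^k=O(\alpha_i/n)$ and thus $\alpha_i^k\approx\alpha_i$ as $n\to\infty$.

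For a fixed pair $\{j,j'\}$, the strategy is to view the extra equation $x_j=x_{j'}$ as augmenting the invariant homomorphism $M$ to a new invariant integer homomorphism $M'\colon[\mathbb Z^m]^k\to[\mathbb Z^m]^{k+1}$. If the new equation is not a consequence of $M$, then by the parametrization in Proposition~\ref{p.solution_structure} the solution lattice of $M'$ has at least one fewer free parameter $\lambda_\ell$ than that of $M$. Passing to the cyclic setting via Proposition~\ref{p.alf_1_cube}, the size of $\overline{S}_{\lambda n}$ scales polynomially in the number of free parameters (as $(\lambda n)^{m\cdot\text{dof}}$), so losing one parameter divides the count by a factor of order $n$. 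Restricting to the fixed $\pi_U$-fiber only translates the affine part of the solution set without changing its rank, so the saving persists, and we obtain the desired bound $O(\alpha_i/n)$ on each summand, uniform in $(g_1,\dots,g_i)$ and with implicit constants depending only on $M$.

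The main potential obstacle is the degenerate case in which $x_j=x_{j'}$ is already forced by $M$ (or by the chosen projection values), so that no rank drop occurs. I would handle this by grouping the coordinates $1,\dots,k$ into their equality classes under the identifications already implied by $M$, passing to a quotient configuration system on fewer variables, and applying the rank-drop argument there; in the quotient each genuine new identification does cut the rank, and the overall estimate is unchanged up to constants depending on $M$. Once $\alpha_i^k\approx\alpha_i$ is established, every application of Corollary~\ref{o.counting_sol_free_hom} and Theorem~\ref{t.ramdom_sparse_hom} phrased in terms of $\alpha_i^k$ in Section~\ref{s.hom} transfers verbatim to the $[1,n]^m$ cube setting.
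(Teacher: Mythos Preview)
Your proposal is correct and follows essentially the same line as the paper: the paper treats this as a one--line observation (imposing a new equality $x_j=x_{j'}$ that is not already a consequence of $M$ drops the dimension of the solution set by one, hence divides the count by a factor of order $n$), and you have simply spelled out this rank--drop argument in detail via the inclusion $S_n\setminus S_n^{(k)}\subseteq\bigcup_{\{j,j'\}}\{x_j=x_{j'}\}$ and the parametrization of Proposition~\ref{p.solution_structure}. Your explicit handling of the degenerate case (coordinates already identified by $M$) is a useful addition that the paper leaves implicit; the only minor inaccuracy is the exponent in your count ``$(\lambda n)^{m\cdot\text{dof}}$'', since the scalar parameters $\lambda_\ell$ each contribute one degree of freedom rather than $m$, but this does not affect the conclusion that one lost parameter costs a factor~$n$.
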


By Observation~\ref{o.a-to-ak-box}, Proposition~\ref{p.alf_1_cube} and assuming the normality of the family of homomorphism systems induced by $M$, we obtain analogous results to
Proposition~\ref{p.hom_concentration},
Theorem~\ref{t.ramdom_sparse_hom} and Corollary~\ref{o.counting_sol_free_hom}.
Therefore, we obtain counting and random-sparse-analogue results for sets in $[1,n]^m$ free of $m$-dimensional simplices
\begin{equation}\label{eq.simplices}
\{((x_1,\ldots,x_m),(x_1+a,\ldots,x_m),\ldots,(x_1,\ldots,x_m+a))\; |\;x_i,x_i+a\in [1,n]\}
\end{equation}
(multidimensional Szemer\'edi), or other homothetic-to-a-point linear structures. These results have also been obtained in \cite{BMS14,ST12}.

In some cases, we want to consider configurations systems where some of the $a_i$ from Proposition~\ref{p.solution_structure}
are non-negative (such as when we ask in \eqref{eq.simplices} for $a\geq 0$). By considering symmetric configurations (configuration containing the vectors with $+a_i$ and $-a_i$ for every $i$ in Proposition~\ref{p.solution_structure}), we obtain the V-property for these restricted configurations. Furthermore, the $\alpha_i$ in the restricted case are, up to a multiplicative constant, equal to their unrestricted counterparts. Indeed, the total number of solutions is, up to the factor $(1/2)$ raised to the power given by the number $a_i$'s asked to have a specific sign. Similarly as in the proof of Proposition~\ref{p.alf_1_cube}, an averaging argument shows that, up to a multiplicative constant depending on the number of coordinates and on the number of points in the configuration, the maximum number of solutions projected to a partial solution with $a_i$ restricted in sign is the same as in the case of the homomorphism that has been restricted.
\subsection{Linear systems of equations on abelian groups}\label{subsec:sys_lin_eq}
In this subsection we study in detail a prominent case of group homomorphism.
Let $G$ be an abelian group.
Following the language of Module Theory, for $g\in G$, $n\in \mathbb{N}$, we define $ng=g+\stackrel{(n)}{\dots}+g$ (the definition can be extended to negative $n$).
Let $A$ be $k\times m$ matrix with integer entries, $(x_1,\dots, x_k)\in G^{k}$ and $\textbf{x}=(x_1,\dots, x_k)^{\mathrm{T}}$.
We consider the group homomorphism from $G^{k}$ to $G^{m}$ defined by the matrix multiplication $(x_1,\dots, x_k) \mapsto A \textbf{x}$.
This homomorphism defines a system of configurations whose solutions are the elements $\textbf{x}\in G^k$ such that $A \textbf{x}=0$.
As usual, we refer to these type of homomorphisms as linear system of equations.
In all this section we assume that $A$ has maximum rank.

We start by discussing results in the integer scenario in~\ref{ss.integersystems}, where we relate our framework with~\cite{rodruc97}.
Later, we develop on the case when dealing with finite fields and finite abelian groups in~\ref{ss.linear}.
Let us mention that similar results appeared in the work of Saxton and Thomason in~\cite{ST12}.
\subsubsection{The integer case}\label{ss.integersystems}
Following~\cite{rodruc97}, we assume that $A$ is \emph{irredundant}, namely for each pair of indices $i\neq j$, there exist a solution $(x_1,\ldots,x_k)$ with $x_i\neq x_j$.
In particular, irredundancy implies that $S^{(k)}\subset A^{-1}(0)$ is non-empty.
This naturally relates with the condition of \emph{abundancy} introduced in \cite{ST12}.
Indeed, an abundant matrix is irredundant, but not every irredundant matrix is abundant. For instance $x_1-2 x_2=0$ is irreducible but not abundant.
See for example \cite{RuZU14} for a wide variety of explicit instances studied in the literature fitting with this setting.

We are also interested when taking coordinates in the interval $[1,n]$ (or in some cases in $[-n/2,n/2]$) instead of $\Z$ to obtain quantitative results.
Obviously in this case we do not have a group structure (and hence, a group homomorphism).
However, as discussed in Subsection~\ref{s.cube} we can also study this situation without technical problems (when the same fixed integer matrix is considered for all $n$), and we obtain the same properties assured by the condition of invariant homomorphism (as we can place $[1,n]$ in a convenient cyclic group $\Z/s\Z$, for $s$ large enough depending on $A$ to univocally transfer the solution sets between the two settings).

We say that $A$ satisfies the \emph{strong column condition} if the sum of the columns of $A$ is zero.
This is equivalent to the fact that $A$ induces an invariant homomorphism.
Frankl, Graham, R\"odl \cite{FGR88} (see also \cite[Theorem 6.1]{SV13}) proved the V-property in this particular case (which the authors call \emph{density regular} property).
More precisely, for every $\delta>0$ there exists $n_0=n_0(\delta)$ and an $\epsilon=\epsilon(\delta,A)>0$ with the following property: for every  $n\ge n_0$ and for every set $X\subset [1,n]$ with $|X|\geq \delta n$, the linear system $A\textbf{x}=0$ satisfying the strong columns condition has at least $ \epsilon n^{k-m}$ solutions with $\mathbf{x}\in A^{-1}(0)\cap [1,n]^k=S^{(k)}$. Moreover, if the matrix $A$ does not satisfy the strong column condition then $A$ is not density regular.
Indeed, one can obtain that solutions given by the V-property could be assume to have pairwise different components (see for instance \cite{RuZU14}).
Complementarily, using the fact that $A$ is irredundant we can deduce that $\xi_{\text{\ref{t.count_conf_1}}}/|S^{(k)}|>a>0$, for some $a$ depending on $A$ and on $\delta$, is satisfied and Theorem \ref{t.count_conf_1} applies in this setting.
Irredundancy implies that the system $A'$ formed by $A$ when any equation $x_i=x_j$ is added reduces the rank by $1$ while keeping the same number of variables. Since the number of variables is bounded by $k$, we obtain that  $|A'^{-1}(0)\cap [1,n]^k|=O(n^{k-m-1})$ hence $|S^{(k)}|\approx |S|\approx n^{k-m}$.

Let us compute the bounds for the parameter $t$ in Theorem \ref{t.count_conf_1}.
In fact, for an irredundant system $A$, such parameter was already defined as $m_A$ by R\"odl and Ruci\'nski in \cite[Definition~1.1]{rodruc97}:
\begin{equation}\label{eq.rodlRuck}
	m_A=\max_{q\in[2,k]}\max_{\substack{B\subset [1,k]\\ |B|=q}}
	\frac{q-1}{q-1+h_{B}-\ell}
\end{equation}
where $\ell$ is the rank of the matrix $A$, $h_{B}$ stands for the rank of the matrix $A^{B}$ (namely, the submatrix of $A$ where the columns indexed by $B\subset [1,k] $ have been deleted).
Then, the right order of magnitude for the lower bound for $t$ is $n^{1-1/m_A}$.
Let us also mention that this parameter $m_A$ was already exploited in the work of Schacht \cite{Sch12}, Friedgut, R\"odl and Schacht \cite{FriRodSch10}, and Saxton and Thomason \cite{ST12}.
We now show that our specification of $t$ in terms of the different coefficients $\{\alpha_l^k\}_{1\leq l\leq k }$ gives exactly $m_A$.

Fix $1\leq l \leq k$.
Observe first that, from linearity of the equation $A\mathbf{x}=0$ and the fact that $A$ is irredundant, $\alpha_l$ and $\alpha_l^k$ have the same order of magnitude: when fixing $l$ coordinates of $\textbf{x}$ in positions indexed by $B \subset [1,k]$, $l=|B|$, the new system of equations $A^B \textbf{y}=\textbf{b}$ has either $0$ solutions or a number that only depends on the rank of $A^B$.
Assume that there exists a solution $\textbf{x}_0=(\textbf{x}_{\overline{B}},\textbf{x}_B)$ with all coordinates being different such that $A^B \textbf{x}_{B}=-A^{\overline{B}} \textbf{x}_{\overline{B}}=\textbf{b}$.
Each solution $\textbf{y}_{B}\in S(A^{B},\Z)$ of
$A^B \textbf{y}_{B}=0$ for which  $\textbf{y}_{B}+\textbf{x}_{B}$ has some repeated coordinates, say indexed by $i$ and $j$, must satisfy an additional equation of the type $y_{B,i}-y_{B,j}=x_{B,j}-x_{B,i}=d\neq 0$; for $A^{B}$ irredundant for the pair $(i,j)$, the previous additional constrain implies lowering the rank by $1$ thus meaning asymptotically less solutions (lowered by a factor of $1/n$); for $A^B$ not being irredundant for the pair $(i,j)$, the addition of the equation $y_{B,i}-y_{B,j}=d$ makes the new system incompatible, so no solution added to $\textbf{x}_B$ will equate the coordinates $i$ and $j$.
Hence, most (asymptotically all) of the solutions that project to $\textbf{x}_{\overline{B}}$ have all different coordinates.
An analogous reasoning can be applied when there is no solution  $\textbf{x}_0=(\textbf{x}_{\overline{B}},\textbf{x}_B)$ with all coordinates being different such that $A^B \textbf{x}_{B}=-A^{\overline{B}} \textbf{x}_{\overline{B}}=\textbf{b}$ to observe that the orders of magnitude of the number of solutions that are projected to $\textbf{x}_{\overline{B}}$ are the same regardless of whether all the coordinates are asked to be different or no.
 Therefore, by taking the maximum over all $B$ of fixed size $l$ we conclude that $\alpha_l$ and $\alpha_l^k$ must have the same order of magnitude. Compare this argument with the one in the general setting of homomorphism developed in the proof of Proposition~\ref{p.hom_concentration} (See the argument for $\Delta_2$ in the proof of Proposition~\ref{p.hom_concentration}).

We can study then the modified parameter
\begin{equation} \label{e.def_param}
 \max_{l\in[2,k]}\left\{ \left(\frac{ \alpha_{l}}{\alpha_1}\right)^{\frac{1}{l-1}} \right\}=	c\max_{l\in[2,k]}\left\{ \left(\frac{ \alpha_{l}}{\alpha_1} \frac{1}{k}{k\choose l} \right)^{\frac{1}{l-1}} \right\}
\end{equation}
for some $c$ depending on $k$.
Observe that left hand side of \eqref{e.def_param} is the value appearing in Theorem~\ref{t.count_conf_1} but where $\alpha_l^k$ have been replaced by $\alpha_l$.
When the solutions of $A\mathbf{x}=0$ are restricted with $\mathbf{x}\in [1,n]^k$ or ($\mathbf{x}\in [-n/2,n/2]^k=G^k$), we have that $|A^{-1}(0)\cap [1,n]^k|=c_A n^{k-\ell}$. Let $S_n=A^{-1}(0)\cap [1,n]^k$ and
consider
$$\alpha_i=\max_{\substack{B\subset[1,k]\\|B|=i}} \max_{(g_1,\ldots,g_i)\in G^i} \left\{|S_n\cap \pi_B^{-1}(g_1,\ldots,g_i)|\right\}.$$
Observe that, fixing $B\subset[1,k]$ with $|B|=i$ then
\begin{displaymath}
	\max_{(g_1,\ldots,g_i)\in G^i} \left\{|S_n\cap \pi_B^{-1}(g_1,\ldots,g_i)|\right\}=c_{A,B,i}\; n^{k-i-h_B},
\end{displaymath}
as $k-i-h_B$ are the degrees of freedom: the difference between the free variables, $k-i$, minus the rank of the matrix, or the number of relations/valid equations between the variables.
Therefore,
\begin{displaymath}
	\alpha_i=\max_{\substack{B\subset[1,k]\\|B|=i}} \left\{ c_{A,B,i}\; n^{k-i-h_B}\right\}=
	c'_{A,i} \max_{\substack{B\subset[1,k]\\|B|=i}} n^{k-i-h_B}.
\end{displaymath}
It is not difficult to see that $\alpha_1=c_A n^{k-1-\ell}$ as, if the matrix is irredundant, there is a variable for which, if we fix it, the rank of the new matrix (with one less column), does not change.
Indeed, if the matrix is irredundant, $m\geq k+1$ as there is at least one non-zero solution to $A\mathbf{x}=0$.
If the matrix is full rank there is one $k\times k$ full rank submatrix. The claim follows.
We conclude substituting everything in \eqref{e.def_param} that
\begin{align}
	\max_{i\in[2,k]}\left\{ \left(\frac{ c'_{A,i} \max_{\substack{B\subset[1,k]\\|B|=i}} n^{k-i-h_B}}{c_A n^{k-1-\ell}}\right)^{\frac{1}{i-1}} \right\}
& = c'''_{A}
\max_{i\in[2,k]}\left\{ \left(
\max_{\substack{B\subset[1,k]\\|B|=i}} n^{k-i-h_B-k+1+\ell}\right)^{\frac{1}{i-1}}\right\} \nonumber \\
&=
\max_{i\in[2,k]}
\max_{\substack{B\subset[1,k]\\|B|=i}}
n^\frac{-i-h_B+1+\ell}{i-1} =
\max_{i\in[2,k]}
\max_{\substack{B\subset[1,k]\\|B|=i}}
n^\frac{-i-h_B+1+\ell}{i-1}. \nonumber
\end{align}

Hence the above quantity is maximal whenever
$\frac{-i-h_B+1+\ell}{i-1}$ is maximal on the appropriate domain. But
$\frac{-i-h_B+1+\ell}{i-1}$ is maximal if and only if
\begin{displaymath}
	-\frac{1}{\frac{-i-h_B+1+\ell}{i-1}}=\frac{i-1}{i+h_B-1-\ell}
\end{displaymath}
is maximal, which is precisely the quantity $m_A$.
Hence, we have found the relation between $\{\alpha_l^k\}_{1\leq l \leq k}$ (which havehave the same order of magnitude as $\{\alpha_l\}_{1\leq l \leq k}$) and $m_A$.
Putting all together we get the following theorem (which extends  \cite[Theorem 1.1]{BMS14}):

\begin{theorem} \label{thm_systems}
Let $A$ be a $k\times m$ irredundant matrix, $k>m$,  with integer entries and maximum rank.
For every positive $\beta$ there exists constants $C=C(A,\beta)$ and $n_0=n_0(A,\beta)$ such that if $n\geq n_0$ and $t\geq C n^{1-1/m_A}$, then  the number of solution--free subsets of size $t$ of $[1,n]$ to the system of equations $A\textbf{x}=0$  is at most
$$
\binom{\beta n}{t}.
$$
\end{theorem}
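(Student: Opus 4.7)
The plan is to apply Theorem~\ref{t.count_conf_1} directly to the configuration system $(S_n,[1,n])$ where $S_n = A^{-1}(\mathbf{0}) \cap [1,n]^k$, with the choice $\delta = \min\{\beta/2,\,1/40\}$. Under this choice the conclusion of Theorem~\ref{t.count_conf_1} reads that the number of solution-free sets of size $t$ is at most $\binom{\beta n}{t}$, which is exactly the bound claimed. The entire argument therefore reduces to two tasks: verifying the hypotheses of Theorem~\ref{t.count_conf_1} uniformly in $n$, and identifying its threshold on $t$ with $C\,n^{1-1/m_A}$.

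For the hypotheses, the V-property is supplied by the theorem of Frankl, Graham and R\"odl~\cite{FGR88}: since the relevant case is that $A$ is density regular (equivalently, satisfies the strong column condition; otherwise $A$ is not density regular, as recalled in Subsection~\ref{ss.integersystems}, and the V-property fails), for every $\varepsilon>0$ there exists $\gamma=\gamma(\varepsilon,A)>0$, \emph{independent of $n$}, such that every $X\subseteq[1,n]$ with $|X|\geq \varepsilon n$ contains at least $\gamma\, n^{k-m}$ solutions of $A\mathbf{x}=\mathbf{0}$. Irredundancy of $A$ forces configurations with coincident entries to be of strictly smaller order, so $|S_n^{(k)}| = (1+o(1))|S_n| = \Theta_A(n^{k-m})$. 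Consequently, for $n$ sufficiently large, the quantity $\xi = \max\{(\gamma-1)|S_n|+|S_n^{(k)}|,\,\delta n/2\}$ is positive, and every subset of $[1,n]$ of size $\delta n/2$ contains a solution in $S_n^{(k)}$ (apply the V-property with parameter $\delta/2$ and subtract off the vanishing fraction of solutions with collisions).

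For the threshold, Theorem~\ref{t.count_conf_1} demands $t \geq (C/\delta)\,n\cdot \max_{\ell\in[2,k]}\bigl(\alpha_\ell^k/\alpha_1^k \cdot \tfrac{1}{k}\tbinom{k}{\ell}\bigr)^{1/(\ell-1)}$. Invoking the explicit computation carried out in the paragraphs preceding the theorem, irredundancy yields $\alpha_\ell^k \asymp \alpha_\ell$, and substituting $\alpha_1 \asymp n^{k-1-\mathrm{rank}(A)}$ together with $\alpha_\ell \asymp \max_{B\in\binom{[k]}{\ell}} n^{k-\ell-h_B}$ reduces the maximum to $\max_{\ell,B}\, n^{-(\ell+h_B-1-\mathrm{rank}(A))/(\ell-1)}$. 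Writing the exponent as $-1/m_{\ell,B}$ with $m_{\ell,B}=(\ell-1)/(\ell+h_B-1-\mathrm{rank}(A))$ and using $m_A = \max_{\ell,B} m_{\ell,B}$ from~\eqref{eq.rodlRuck}, the maximum equals $n^{-1/m_A}$. Absorbing all factors depending only on $A$, $k$ and $\beta$ into a single constant $C=C(A,\beta)$, the hypothesis becomes $t \geq C\,n^{1-1/m_A}$, and Theorem~\ref{t.count_conf_1} delivers the stated bound.

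The main technical point is guaranteeing that the V-property constant $\gamma$ is genuinely uniform in $n$, which is exactly what Frankl-Graham-R\"odl supplies once $A$ is fixed. The remaining step, identifying the degree-of-freedom maximum with $n^{-1/m_A}$, is purely the algebraic rearrangement already performed in the preceding discussion, and the passage between $\alpha_\ell^k$ and $\alpha_\ell$ is routine under irredundancy via the same averaging argument used in the proof of Proposition~\ref{p.hom_concentration}.
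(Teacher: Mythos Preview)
Your proposal is correct and follows essentially the same route as the paper: apply Theorem~\ref{t.count_conf_1} with $\delta=\min\{\beta/2,1/40\}$, invoke the Frankl--Graham--R\"odl result for the V-property, use irredundancy to get $|S_n^{(k)}|=(1+o(1))|S_n|$ and hence verify the $\xi$ condition, and then identify the threshold on $t$ with $C\,n^{1-1/m_A}$ via the algebraic computation of $m_A$ already carried out in the text preceding the theorem. The paper does not isolate a formal proof but simply writes ``Putting all together we get the following theorem'' after that discussion, so your write-up is in fact more explicit than the paper's own treatment.
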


Let us see a direct consequence of Theorem~\ref{thm_systems}.
Denote by $ex(A,n)$ the size of the largest subset $F\subset [1,n]$ which contains no non-trivial solution of the equation $A\mathbf{x}=0$.
By a trivial solution, following Ruzsa~\cite{R93} and Shapira~\cite{Sh06}, we mean a solution which has constant value on variables whose coefficients add up to zero.
Computing $ex (A,n)$ is not obvious and depends heavily on $A$.
For linear equations ($m=1$) the situation can be illustrated as follows.
Let $L=(a_1,\ldots,a_k)$. Ruzsa names the linear equation $L\cdot \textbf{x}=0$ to be of \emph{genus} $g$ if $g$ is the size of the largest partition of the coefficient set such that the sum of coefficients in each part is zero.
In \cite{R93} he proves that $ex(L,n)\ll n^{1/g}$.

Theorem~\ref{thm_systems} presents an upper bound for the number of solution-free subsets of size $t$ with $t\geq C n^{1-1/m_A}$ (notice that $m_A>0$, hence $1-1/m_A<1$). For equations of genus $g>0$, the size of the largest solution-free set is of the order  $n^{1/g}$. Hence, we are interested in the cases when $1/g> 1-1/m_A$.
It is known that the number of linear systems equations for which the extremal free sets can be (almost)-linear is not negligible. Furthermore,
Shapira \cite{Sh06} shows that almost all linear systems of equations satisfying the strong columns condition have sharp sublinear (Behrend--type) examples which are solution--free.
More precisely, let $\A (k,m,h)$ be the set of $k\times m$ matrices $A$ with integer coefficients such that
\begin{itemize}
\item[(i)] all coefficients in $A$ are bounded in absolute value by $h$,
\item[(ii)]
$A$ satisfies the strong columns condition, so the coefficients of every
row in $A$ sum to zero.
\item[(iii)] $m\le k-\lceil \sqrt{2k}\rceil +1$.
\end{itemize}
\begin{theorem}[Shapira \cite{Sh06}]\label{t.shapira}  Let $k\ge 6$ and $h$ be positive integers. There are $c(k,h)$ and $c(k)$ such that all but at most $c(k)/h$ of the matrices in $\A (k,m,h)$ satisfy a Behrend--type lower bound:
$$
ex(A,n)\ge ne^{-c(k,h)\sqrt{\log n}}.
$$
\end{theorem}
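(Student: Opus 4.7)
The plan is to use a high-dimensional Behrend sphere construction to produce solution-free subsets of $[1,n]$ of size $n e^{-c(k,h)\sqrt{\log n}}$ for matrices $A \in \A(k,m,h)$ whose rows have a ``generic'' coefficient structure, and then bound the number of exceptional matrices. I would fix a dimension $d$ and a base $N \gg mh$, and let $\Lambda \subset [0,N-1]^d$ be the lattice points on a sphere of appropriately chosen radius; a standard pigeonhole over admissible radii gives $|\Lambda| \ge N^d e^{-c_1\sqrt{d\log N}}$. The base-$N$ digit map $\phi(v_1,\ldots,v_d)=\sum_i v_i N^{i-1}$ embeds $\Lambda$ into $[1,n]$, with $n=N^d$, and the choice $N \gg mh$ rules out carries in any $\Z$-linear combination with coefficients bounded by $h$. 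Consequently, any solution $\phi(\mathbf{v})\in \phi(\Lambda)^m$ of $A\mathbf{x}=0$ corresponds, row by row, to an identity $\sum_j a_{i,j}v_j=0$ in $\Z^d$.

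For matrices $A$ where every row has \emph{genus one}---meaning that no non-empty proper subset of its support sums to zero---the classical Behrend-type convexity argument, applied coordinate-by-coordinate on the sphere, forces the $v_j$'s to all coincide, so the only solutions in $\phi(\Lambda)^m$ are constant vectors and $\phi(\Lambda)$ is free of non-trivial solutions. It therefore suffices to show that the proportion of matrices in $\A(k,m,h)$ having at least one row of genus at least two is $O(1/h)$, with the constant allowed to depend on $k$. For a single row, the number of integer tuples $(a_1,\ldots,a_m)$ with $|a_j|\le h$ and $\sum_j a_j=0$ is of order $h^{m-1}$, while the additional constraint that the support admits a non-trivial zero-sum sub-partition imposes at least one extra independent linear equation, reducing the count to $O_k(h^{m-2})$; summing over the finitely many set partitions of $[1,m]$ and applying a union bound over the $k$ rows then gives the claimed density.

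The main obstacles are, first, validating the Behrend-type convexity step for every genus-one row, which is not immediate for coefficient patterns more complicated than a $3$-term arithmetic progression and requires either an inductive reduction to three-point configurations or a tailored spherical geometry argument on the ``positive'' and ``negative'' halves of the coefficient support; and second, making the $O_k(h^{m-2})$ count of bad zero-sum rows uniform over all partition types and verifying that condition~(iii) $m\le k-\lceil\sqrt{2k}\rceil+1$ provides enough slack both for the row-by-row near-independence in the union bound and for ensuring that the resulting matrices remain of maximal rank with probability bounded away from zero. Both points are carried out in the detailed arguments of Shapira's paper~\cite{Sh06}.
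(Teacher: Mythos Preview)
The paper does not supply a proof of this statement: Theorem~\ref{t.shapira} is quoted from Shapira~\cite{Sh06} and used as a black box, so there is no proof in the paper to compare your attempt against.

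As an outline, your sketch has the right overall shape (a Behrend-type construction plus a density estimate for the exceptional matrices), and the counting of rows admitting a non-trivial zero-sum sub-partition is essentially correct. The substantive gap is in the Behrend step. The claim that for every genus-one row the convexity of the sphere forces the $v_j$ to coincide is not true in general: the strict-convexity argument works only when one side of the rewritten equation is a single point, so that this point equals a proper convex combination of the remaining sphere points and is therefore forced to be extremal. It fails once both the positive-coefficient part and the negative-coefficient part contain at least two terms. For instance, $3x_1+2x_2-4x_3-x_4=0$ has genus one (no proper subset of $\{3,2,-4,-1\}$ sums to zero), yet rewrites as $(3x_1+2x_2)/5=(4x_3+x_4)/5$, an equality of two interior convex combinations which imposes no extremality constraint on any individual $x_j$; already on a Euclidean circle this admits non-trivial real solutions, so no purely convexity-based argument is available. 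You flag this as an obstacle, but it is not a missing technicality to be patched by a ``tailored spherical geometry argument'': the row-by-row reduction to genus-one equations is not the mechanism behind the result. Shapira works with the system as a whole, using linear-algebraic manipulations to exhibit, inside a generic system satisfying (i)--(iii), an equation of the specific shape to which the classical three-term Behrend construction applies directly; condition~(iii) enters in this reduction and not merely, as your sketch suggests, in a union bound over rows.
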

Roughly speaking, Theorem \ref{t.shapira} tells that the large majority of system of equations have a Behrend--type lower bound for $ex(A,n)$.
Consequently, Theorem \ref{thm_systems} gives (for almost all systems) a non-trivial tight bound for the number of solution-free sets in the regime
$$n^{1-1/m_A}\leq t \leq ne^{-c(A)\sqrt{\log n}}. $$
\begin{remark} \label{r.syst}
 Saxton and Thomason prove similar results for linear systems of equations using the container methodology in \cite{ST12}, which slightly differs from \cite{BMS14}. See Subsection \ref{ss.linear} for a detailed explanation of their results.
\end{remark}	
Let us finally discuss the random sparse counterpart.
The arguments for invariant homomorphisms in Subsection \ref{s.hom} applies in this framework and Definitions \ref{d.concentration_intersection} and \ref{d.g-uniform} applies.
Moreover, the previous arguments had shown that, for each $\delta>0$ the threshold probability
for

the $(\delta, A^{-1}(0)\cap [1,n]^k)_k$-stability is $p_A=n^{-1/m_A}$.
This result was first obtained by Schacht \cite[Theorem 2.4]{Sch12} and Saxton and Thomason \cite[Theorem 12.3]{ST12}, and extends the sparse Szemer\'edi--type of Conlon and Gowers \cite[Theorem 1.12]{CG10}.
\subsubsection{Linear system of equations in other groups}\label{ss.linear}
In this subsection we discuss results for linear system of equations over finite fields and abelian groups.
In both cases the V-property holds as a consequence of the V-property for group homomorphisms, and also from previous works of Kr\'al', Serra and Vena \cite{KRSV12,KrSV13}.

In the finite field setting, the computations are essentially the same as in the integer case, giving rise to the very same constant $m_A$ as defined in Equation ~\eqref{eq.rodlRuck}.
Hence, this gives the analogue of Theorem~\ref{thm_systems} for equations over finite fields.
For finite fields we could consider matrices with coefficients over the field and the results would also be analogous.
Again, in this setting we have lower bounds for the size of sets $X \subset \mathbb{F}_p^n$ avoiding solutions to a given equation $A \mathbf{x}=0$:
when $n$ is fixed and $p$ tends to infinity, Behrend-type constructions from integers transfer easily to constructions in $\mathbb{F}_p^n$.
However, the case with $p$ fixed and $n$ tending to infinite had been less studied and there are only few families studied in this context, see for instance \cite{LinWolf2010}.

Let us finally shortly describe this setting over general abelian groups, which slight differs from the previous cases. In this context, as pointed out in \cite{ST12}, the rank of a matrix over an abelian group is not well defined, and hence Equation \eqref{eq.rodlRuck} must be computed by other sources. See \cite[Section 10]{ST12} for the details of the right parameter $m_A$ in this context. Let us also mention that the \emph{determinantal} condition needed in \cite[Theorem 10.3]{ST12} is not necessary due to the group homomorphism result we are using, that extends~\cite[Theorem~1]{KrSV13}.
%
%
\subsection{Configurations in non-abelian groups}\label{ss.nonabelian}
In this subsection we discuss still another family of examples arising from equations on non-abelian groups.
To simplify notation, we write $e$ for a generic identity element on a group $G$.
The main theorem we can prove is the following:

\begin{theorem} Let $r_1,\ldots ,r_k$ be fixed positive integers and $r=r_1+\cdots+r_k$. Let $\{G_i\}_{i\ge 1}$ be a sequence of groups with unit element $e$. Assume that the exponent of $G_i$ is a divisor of $r$ and that for every $j$, $\gcd (r_j,|G_i|)=1$. Then, for each $\delta >0$ with $\delta\le \min \{\beta/2,1/40\}$ there exist positive constants, $i_0$, $c_1$ and $c_2$, $C$ such that the following hold: for each $i\geq i_0$ and $t$ in the margin
$$
 \frac{C }{\delta}k^{-\frac{1}{k-1}}  |G_i|^{\frac{1}{k-1}}\leq t \leq \frac{\delta}{2}|G_i|,
$$
there are at most
$$
{\beta|G_i|\choose t}
$$
sets in $G_i$ which are free of solutions to the equation
\begin{equation}\label{eq.de_les_ult}
x_1^{r_1}\cdots x_k^{r_k}=e.
\end{equation}
Furthermore, given $S_i$ the solution set induced by \eqref{eq.de_les_ult} and $p_{(S_i,G_i)}=|G_i|^{-\frac{k-2}{k-1}}$ then
\begin{displaymath}
	 \lim_{i\to \infty}
\mathbb{P} ([G_i]_p\,\,\mathrm{is}\,\, (\delta,S_i)_k\text{-stable})=
\left\{
\begin{array}{cc}
	1 & \text{if } p\geq c_1 p_{(S_i,G_i)}, \\
	0 & \text{if } p< c_2 p_{(S_i,G_i)}.\\
\end{array}
\right.
\end{displaymath}
\end{theorem}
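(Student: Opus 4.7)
The plan is to verify that the configuration system $(S_i, G_i)$, with $S_i$ the solution set of $x_1^{r_1} \cdots x_k^{r_k} = e$, is normal in the sense of Definition~\ref{d.normal}, and then apply Theorem~\ref{t.count_conf_1} for the counting bound together with the framework of Section~\ref{sec:random} for the random-sparse statement. The exponent hypothesis plays exactly the role of the invariance condition in the homomorphism setting: because the exponent of $G_i$ divides $r = r_1 + \cdots + r_k$, every diagonal tuple $(g, g, \ldots, g)$ satisfies $g^r = e$ and therefore belongs to $S_i$. Combined with the arithmetic removal lemma for a single equation in non-abelian groups of \cite{Krsv09}, this yields the V-property with a function $\gamma$ depending only on $\delta$ and $k$, uniformly across the sequence, by the same argument used to prove Lemma~\ref{l.V-property_invariant_hom}.

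The coprimality hypothesis $\gcd(r_j, |G_i|) = 1$ makes each power map $x \mapsto x^{r_j}$ a bijection of $G_i$. Consequently, for any $\ell < k$ coordinate positions $U$ and any prescribed values there, the residual equation in the remaining $k - \ell$ variables admits exactly $|G_i|^{k-\ell-1}$ solutions (one can always isolate and uniquely invert a chosen power). Hence $|S_i| = |G_i|^{k-1}$, $\alpha_\ell^k(S_i, G_i) \asymp |G_i|^{k-\ell-1}$ for $1 \leq \ell \leq k - 1$, and $\alpha_k^k(S_i, G_i) = 1$; a short count on the configurations with some pair of equal entries gives $|S_i^{(k)}|/|S_i| \to 1$, so the sequence is normal. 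Substituting into the bound of Theorem~\ref{t.count_conf_1} yields
$$\max_{\ell \in [2, k]} \left(\frac{\alpha_\ell^k}{\alpha_1^k} \cdot \frac{1}{k}\binom{k}{\ell}\right)^{1/(\ell-1)} \asymp k^{-1/(k-1)} |G_i|^{-(k-2)/(k-1)},$$
with the maximum attained at $\ell = k$, and Theorem~\ref{t.count_conf_1} immediately produces the bound $\binom{\beta|G_i|}{t}$ on the number of solution-free sets of size $t \geq \frac{C}{\delta} k^{-1/(k-1)} |G_i|^{1/(k-1)}$ after choosing $\delta = \min\{\beta/2, 1/40\}$.

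For the random-sparse threshold I would follow the pattern of Theorem~\ref{t.ramdom_sparse_hom}. The $1$-statement at $p \geq c_1 p_{(S_i, G_i)}$ is an immediate consequence of Theorem~\ref{thm: random2}, using the V-property and normality already established. For the $0$-statement I would check the $\rho$-$k$-uniformity (Definition~\ref{d.g-uniform}) and the concentration property (Definition~\ref{d.concentration_intersection}). Uniformity holds with $\rho = 1$, because the bijectivity of the power maps forces every non-empty fibre of every coordinate projection to have exactly the same size $|G_i|^{k-\ell-1}$. Concentration follows from a second-moment computation parallel to Proposition~\ref{p.hom_concentration}, whose only structural input is precisely the uniform fibre size just established. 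Then Proposition~\ref{p.zero_statement_uniform}, combined with Proposition~\ref{t.zero_statement0} to cover the very small-$p$ regime, delivers the $0$-statement at $p < c_2 |G_i|^{-(k-2)/(k-1)}$.

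The main obstacle is transporting the second-moment and fibration arguments of Proposition~\ref{p.hom_concentration} to the non-abelian setting, where the diagonal translation action on the solution set used in the abelian case does not preserve $S_i$: the identity $(gx)^{r_j} = g^{r_j} x^{r_j}$ fails. What rescues the argument is precisely the coprimality hypothesis, which directly guarantees that every projection fibre has the same size and that no compatibility obstruction arises when prescribing values at any subset of coordinates; once this is established, the inequalities bounding $\Delta_\ell$ in the proof of Proposition~\ref{p.hom_concentration} transfer verbatim.
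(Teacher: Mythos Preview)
Your proposal is correct and follows essentially the same route as the paper: cite \cite{Krsv09} for the V-property (the exponent hypothesis providing the diagonal solutions needed for the invariance-style argument), use the coprimality hypothesis to make each power map a bijection and thereby compute $\alpha_\ell^k \asymp |G_i|^{k-\ell-1}$ exactly, plug into Theorem~\ref{t.count_conf_1} for the counting bound, and then invoke the machinery of Section~\ref{sec:random} for the threshold. Your explicit discussion of why the second-moment argument of Proposition~\ref{p.hom_concentration} survives without the abelian diagonal translation---namely because coprimality forces all projection fibres to have identical size---is in fact more detailed than the paper's own treatment, which simply asserts that ``one can argue exactly in the same way as in the case of a single equation on the abelian setting.''
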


\begin{proof}
%
Define $S_i=\{(x_1,\dots, x_k)\in G_i^k: x_1^{r_1}\dots x_k^{r_k}=e\}.$
Observe that such equation does \emph{not} emerge from any group homomorphism, and that the group setting is general (the groups are not necessarily abelian).
This system of configurations satisfies the conditions of our framework: as it was shown by Kr\'al', Serra and Vena in \cite{Krsv09}, for each $\delta>0$, $(S_i,G_i)$ satisfies the V-property with a certain function $\gamma_{\text{\cite{Krsv09}}}(\delta,k)$.
As $|S_i|=|S_i^{(k)}|(1+o(1))$, it is obvious that $(\gamma_{\text{\cite{Krsv09}}}(\delta,k)-1)|S_i|+|S_i^{(k)}|>a|S_i^{(k)}|>0$ (for some $a$ depending on $\delta$ and on $k$), hence $\xi_{\text{\ref{t.count_conf_1}}}/|S_i^{(k)}|>a>0$ is satisfied and we are under the assumptions of Theorem~\ref{t.count_conf_1} (note that it is also true that every subset of size greater than $\delta |G_i|/2$ contains a configuration in $S_i^{(k)}$).

We can easily compute the parameters $\alpha_i$.
Note that as $\gcd(r_s,|G_i|)=1$ then the function $f_r:G_i\rightarrow G_i$, with $f_{r_s}(g)=g^{r_s}$ is a bijection.
Hence, when fixing a set of $s$ variables on the equation $x_1^{r_1}\cdots x_k^{r_k}=e$, then number of solutions is equal to $|G_i|^{k-i-1}$.
Consequently $\alpha_i=|G_i|^{k-i-1}$ and $\alpha_i^k=|G_i|^{k-i-1}(1+o(1))$.
The last equality holds because the number of solutions with $i$ fixed components and repeated variables is $o(|G_i|^{k-i-1})$.
Finally, for $i=k$ we have $\alpha_k=\alpha_k^{k}=1$ (a single equation has always solutions with all different components).
Indeed, these arguments hold due to the fact of dealing with a single equation.
Hence, the margin for $t$ for which Theorem~\ref{t.count_conf_1} applies is
\begin{equation}\label{eq.t-nonabelian}
 \frac{C }{\delta}k^{-\frac{1}{k-1}}  |G_i|^{\frac{1}{k-1}}\leq t \leq \frac{\delta}{2}|G_i|,
\end{equation}
where $C$ is the constant stated in Theorem~\ref{t.count_conf_1}.

Finally, let us consider now the random counterpart. In the particular case of a single equation it is straightforward to show both the concentration and the uniformity properties, namely Definitions~\ref{d.concentration_intersection} and~\ref{d.g-uniform} (indeed, one can argue exactly in the same way as in the case of a single equation on the abelian setting, which is covered by the group homomorphism setting). This gives a threshold probability function equal to $p_{(S_i,G_i)}=|G_i|^{-\frac{k-2}{k-1}}$.
\end{proof}
\begin{remark}
As it is shown, in \cite{Krsv09}, the V-property is also satisfied for certain system of equations in non-abelian groups which are \emph{graph representable} (see \cite[Section 3]{Krsv09}). So a similar analysis could be also done for the corresponding system of configurations.
\end{remark}
%

\section{Further research} \label{s.non_ab}

In this paper we have provided a wide variety of examples in which we can combine the hypergraph container technique joint with supersaturation results arising from removal lemmas in different scenarios.
Let us mention that families arising from non-linear configurations (in which a V-property also exists) can be studied as well. This covers the polynomial extension of Szemer\'edi Theorem due to Bergelson and Leibman~\cite{BerLei96} (see also~\cite{Gre02}).

On the other side, there are configurations which still fall beyond the reach of our methods. Let us mention a couple of them.
First, let us discuss an example very similar in shape to the one studied in Theorem~\ref{thm:square}.
In \cite{FurHaj92} the authors study the maximum number of $1$ in a $0-1$ $n \times n$ matrix without certain configurations (by configuration we mean a given partial matrix with 1's and blanks at the entries: there is a certain ordering in the position of the 1's).
The problem slightly differs from the study of the maximum number of edges in a complete bipartite graph $K_{n,n}$ without copies of a given subgraph:
a fixed subgraph $H$ is encoded by several different configurations.
Configuration in this sense cannot be encoded using homomorphisms due to the existence of an ordering. Hence, in this situation we do not have a V-property arising from our setting. Let us mention that the problem considered in \cite{FurHaj92} is a natural generalization of the Zarankievicz problem considered in \cite{KoSoTu54}: both problems coincide when the pattern is the all-ones grid, then any pattern in the matrix coincides with a copy of a complete bipartite graph in a given bipartite graph.

Secondly, in the context of groups, Solymosi in \cite[Theorem 2.2]{sol04} proved by means of the Triangle Removal Lemma of Ruzsa and Szemer\'edi \cite{RuzSze78} that for every $\delta > 0$ there is a threshold $n_0 \in \mathbb{N}$ such that if $G$ is a finite group of order $|G| \geq n_0$ then any set $B \subset  G \times G$ with $|B| = \delta |G|^2$ contains three elements $(a, b),\,(a,c),\,(e,f)$ such that $ab = ec$ and $ac = ef$. However, the corresponding V-property is not known.

Lastly, very recently a lot of effort has been devoted to transfer extremal results in the integers to the set of primes (see for instance \cite{ConFoZha15}). It would be very interesting to get similar results of the ones obtained here in this setting.

\appendix
\section{Example for uniformity}\label{s.app_uni}

The following is an example of a system for which the general arguments from Section~\ref{sec:random} exhibit a gap.

Consider
\begin{displaymath}
A=\begin{pmatrix}
1 & -2 & 1 & 0 & 0 & 0 & 0 &  0\\
1 &  1 & 1 & 1 & 1 & 1 & 1 & -7 \\
\end{pmatrix}	
\end{displaymath}
in $\Z_q$ large prime $q$. Now consider the following system of configurations
$\overline{S}=S\cup S'$
with $S=A^{-1}(0)\subset \Z_q^8$ and $S'$ having the following properties:
\begin{itemize}
	\item
$S'\subset \Z_q^8$.
\item $|S'|=q^c$ for $5>c> 4$ (take c=4.5).
\item $\pi_{\{4,5,6\}}(S')=\{(1,2,3)\}$.
\item For $U\subseteq [1,2,3]\cup[7,8]$, $|\pi_{U}(S')|=\Theta(q^{c\frac{|U|}{5}})$
and $\pi^{-1}_{U}(x)=\Theta(\max_{g\in \pi_{U}(S')}\{|\pi_{U}^{-1}(g)|\})$
\item all the elements in $S'$ have all the coordinates pairwise different.
\end{itemize}
Such a set $S'$ can be created by choosing elements in $\Z_q^5$ uniformly and random with probability $\frac{q^{c}}{q^5}$ and discarding the (few) unwanted elements: the resulting set well have the wanted probabilities with high probability.
In other words, $S'$ has the following shape: $S'=\{(\ast,\ast,\ast,1,2,3,\ast,\ast)\; : \; \ast\in \Z_q\}$, has size being $q^c$ and it is uniform distributed throughout the coordinates $[1,2,3]\cup[7,8]$.

We compute both the values $\alpha_i^{k}$ (for $i\in [1,8]$) and $\min_{\substack{|U|=i,}{U\subset [1,k]}}\left\{|\pi_{U}\left(S^{(k)}\cup S'\right)|\right\}=\chi_i^k$. The values different values can be found in the following table:

\begin{center}
\begin{tabular}{c||c|c|c|c|c|c|c|c|}

$i$ & 1 & 2  & 3 & 4 & 5 & 6 & 7 & 8 \\
  \hline
  $\alpha_i^k$ & $\theta(q^6)$ & $\theta(q^{4.5})$ & $\theta(q^{4.5})$ & $\theta(q^{3.6})$ & $\theta(q^{2.7})$ & $\theta(q^{1.8})$ & $\theta(q^{0.9})$ & $\theta(1)$ \\
\hline
$\chi_i^k$& &
	$\theta(q^2)$&
	$\theta(q^{2.7})$&
	$\theta(q^3)$&
	$\theta(q^4)$&
	$\theta(q^5)$&
	$\theta(q^6)$&
	$\theta(q^6)$\\
\end{tabular}
\end{center}

The first line of the table implies
	\begin{displaymath}
		\max_{\ell \in [2,8]} \left(\frac{\alpha_{\ell}^k}{\alpha_1^k}\right)^{\frac{1}{\ell-1}}=\theta\left( q^{-\frac{1}{4}}\right)=\left(\frac{\alpha_3^k}{\alpha_1^k}\right)^{1/2},
	\end{displaymath}
while the second has
	$$
		\max_{\substack{U\subseteq [1,k]\\ |U|\geq 2}}
		\left(\frac{|\Z_p|}{|\pi_U(S^{(k)}\cup S'|)}\right)^{\frac{1}{|U|-1}}=\theta\left( q^{-\frac{2}{3}}\right)= \left(\frac{q}{q^{3}}\right)^{\frac{1}{3}}
$$
as a consequence.
In particular, we see that there are systems for which Theorem~\ref{thm: random2} and Proposition~\ref{t.zero_statement} do not close the gap. In this example there are two different features involved. In one of them we see that $S'$ is such that one projection onto the coordinate variables $\{4,5,6\}$ has only one possible solution. Additionally, this solution has many preimages. This forces the Theorem~\ref{thm: random2} to pick a larger probability that what it should.

On the other hand, there are projections $\pi_U$ that have more values than what they should. This forces the probability in Proposition~\ref{t.zero_statement} to be smaller and compensate for this. Additionally, let us remark that, in general, the probability from Proposition~\ref{t.zero_statement} is a priori different than the one in Theorem~\ref{thm: random2}, hence our arguments force, in general, a gap between the 1-statement and the 0-statement.

\bibliography{RueSerraVena}
\bibliographystyle{abbrv}

\end{document}